\newtheorem{theorem}{Theorem}
\newtheorem{remark}{Remark}
\newtheorem{proposition}{Proposition}
\newtheorem{lemma}{Lemma}
\newtheorem{corollary}{Corollary}
\newtheorem{definition}{Definition}
\newtheorem{example}{Example}
\def\Uu{{\mathfrak U}}
\def\Cc{{\mathfrak C}}
\def\Rr{{\mathfrak R}}
\def\ie{{\em i.e.,} }
\newfont\bbf{msbm10 at 12pt}
\def\eps{\varepsilon}
\def\phi{\varphi}
\def\R{{\mathbb R}}
\def\N{{\mathbb N}}
\def\Z{{\mathbb Z}}
\def\theta{\vartheta}
\def\le{\leqslant}
\def\ge{\geqslant}
\def\UIL{\underleftarrow\lim(I,T_s)}
\def\UILt{\underleftarrow\lim(I,T_{\tilde s})}
\def\CUIL{\underleftarrow\lim([c_2, c_1],T_s)}
\def\CUILt{\underleftarrow\lim([\tilde{c}_2, \tilde{c}_1],T_{\tilde s})}
\def\chain{{\mathcal C}}
\def\mesh{\mbox{mesh}}
\begin{document}

\title{The Core Ingram Conjecture for non-recurrent critical points}

\author{Ana Anu\v{s}i\'c, Henk Bruin, Jernej \v{C}in\v{c}}
\address[A.\ Anu\v{s}i\'c]{Faculty of Electrical Engineering and Computing,
University of Zagreb,
Unska 3, 10000 Zagreb, Croatia}
\email{ana.anusic@fer.hr}
\address[H.\ Bruin]{Faculty of Mathematics,
University of Vienna,
Oskar-Morgenstern-Platz 1, A-1090 Vienna, Austria}
\email{henk.bruin@univie.ac.at}
\address[J.\ \v{C}in\v{c}]{Faculty of Mathematics,
University of Vienna,
Oskar-Morgenstern-Platz 1, A-1090 Vienna, Austria} 
\email{jernej.cinc@univie.ac.at}
\thanks{AA was supported in part by Croatian Science Foundation under the project IP-2014-09-2285.
HB and J\v{C} were supported by the FWF stand-alone project P25975-N25.
We gratefully acknowledge the support of the bilateral grant \emph{Strange Attractors and Inverse Limit Spaces},  \"Osterreichische
Austauschdienst (OeAD) - Ministry of Science, Education and Sport of the Republic of Croatia (MZOS), project number HR 03/2014.}
\date{\today}

\subjclass[2010]{37B45, 37E05, 54H20}
\keywords{tent map, inverse limit space, Ingram conjecture}

\maketitle

\begin{abstract}
We study inverse limit spaces of tent maps,
and the Ingram Conjecture, which states that the inverse limit spaces of tent
maps with different slopes are non-homeomorphic.
When the tent map is restricted to its core,
so there is no ray compactifying on the inverse limit space, this result
is referred to as the Core Ingram Conjecture.
We prove the Core Ingram Conjecture when the critical point is 
non-recurrent and not preperiodic.

\end{abstract}

\section{Introduction}\label{sec:intro}

Inverse limit spaces made their first appearance in dynamical systems in 1967 when Williams \cite{Wil1, Wil2} showed
that hyperbolic one-dimensional attractors can be represented as inverse limit spaces. The study of inverse limit spaces with the goal to describe complicated structures in strange attractors gained significance in the last two decades.
For instance, the work of Barge \& Holte \cite{BaHo} showed that for a wide range of parameters, attracting sets for maps in H\'{e}non family are homeomorphic with inverse limit spaces of unimodal interval maps.

The \emph{tent map family} $T_s:[0,1] \rightarrow [0,1]$ is defined as $T_s:=\text{ min} \{ sx,s(1-x) \}$ where $x\in [0,1]$ and $s\in (0,2]$. Let $c:= 1/2$ denote the \emph{critical point} and let $c_i:=T_s^i(c)$ for every $i\in \N$. In this paper we are concerned with the inverse limit spaces $\varprojlim([0,1], T_{s})$ using a single tent map from the parametrized family as a bonding map. It is not difficult to see that for $c\geq c_1$,
$\varprojlim([0,1], T_{s})$ is a point or an arc and thus not interesting. For the case $c\leq c_1$ it follows from Bennett's Theorem in \cite{Ben} from 1962 that we can decompose $\varprojlim([0,1], T_{s}) = \varprojlim([c_2,c_1], T_{s}) \cup \Cc$, where $\bar{0}:=(\ldots,0,0,0)\in \Cc$ is a continuous image of $[0,\infty)$ which compactifies on $\varprojlim([c_2,c_1], T_{s})$. Inverse limit space of tent map $\varprojlim([c_2,c_1], T_s)$ obtained from the forward invariant interval $[c_2,c_1]$ is called the \emph{core of the inverse limit space}. 

In the early 90's a classification problem that became known as the \emph{Ingram Conjecture} was posed:

If $1\leq s< \tilde{s}\leq 2$ then the inverse limit spaces  $\varprojlim([0,1], T_{s})$  and $\varprojlim([0,1], T_{\tilde{s}})$ are not homeomorphic.

After partial results \cite{Kail2, BJKK, Stim1, RaSt}, the Ingram Conjecture was finally answered in affirmative by Barge, Bruin \& \v Stimac in \cite{BBS}.
However, the proof  presented in \cite{BBS} crucially depends on the 
ray $\Cc$, so the core version of the Ingram Conjecture still remains open.
For H\'enon maps,  $\Cc$ plays the role of the unstable manifold of
the saddle point outside the H\'enon attractor; it compactifies on the attractor, but it is somewhat unsatisfactory to have to use this 
(and the embedding in the plane that it presupposes) for the 
topological classification.
It is also not possible to derive the core version directly from
the non-core version, because it is impossible to reconstruct $\Cc$
from the core. This is for instance illustrated by the work
of Minc \cite{Minc} showing that in general there are many non-equivalent
rays compactifying on the Knaster bucket handle continuum.

In this paper we partially solve in the affirmative 
the classification problem called the \emph{Core Ingram Conjecture}.

\begin{theorem}\label{CIC}
If $1\leq s < \tilde{s}\leq 2$ and critical points of $T_s$ and $T_{\tilde{s}}$ are non-recurrent, then the inverse limit spaces $\varprojlim([c_2,c_1], T_{s})$ and $\varprojlim([\tilde{c}_2,\tilde{c}_1], T_{\tilde{s}})$ are not homeomorphic.
\end{theorem}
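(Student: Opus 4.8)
The plan is to follow the strategy of \cite{BBS} for the non-core case, but to replace the role of the ray $\Cc$ by a purely internal combinatorial invariant of the core inverse limit space. The key object in \cite{BBS} is the notion of the \emph{folding pattern} (or equivalently the structure of turnlink/salient points along the ray), which records how the map crimps the arc onto itself. Since the ray $\Cc$ is not available in the core, the first step is to isolate, for a given homeomorphism $h\colon\varprojlim([c_2,c_1],T_s)\to\varprojlim([\tilde c_2,\tilde c_1],T_{\tilde s})$, a distinguished point or a distinguished set of arc-components that $h$ must respect. The natural candidate is the set of points whose all neighborhoods fail to be homeomorphic to $\R$ times a Cantor set, together with the arc-component through $\bar 0=(\dots,0,0,0)$ in the core (note $0$ is a fixed point, so $\bar 0$ lies in the core $[c_2,c_1]$ as well). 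Because the critical point is non-recurrent, the inverse limit space is locally an arc times a Cantor set \emph{except} along finitely many ``special'' arc-components, so this topological distinction is genuine and finite, and $h$ must permute these special arc-components.

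Second, I would set up the symbolic coding of the core. Points of $\varprojlim([c_2,c_1],T_s)$ are represented by backward itineraries in the kneading-theoretic alphabet; arc-components correspond to kneading-theoretic equivalence classes, and within an arc-component the natural parametrization is by a ``length'' coordinate that $T_s$ expands by the slope $s$. Non-recurrence of $c$ is the crucial hypothesis here: it guarantees a uniform lower bound on the distance $\inf_n |c_n-c|>0$, hence a \emph{bounded} number of ``turns'' (points mapping to the critical point under projection) in any arc of bounded length, and it makes the local structure of the space a fiber bundle over these finitely many exceptional arc-components. This is where the asymptotic rate of expansion, and therefore the slope, becomes readable from the space: the number of turns of the arc $h$-image of a long arc of expansion length $L$ must grow like $\log L/\log\tilde s$ on the one side and like $\log L/\log s$ on the other.

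Third, with the special arc-components and their length-parametrizations in hand, I would transport the ``$\asymp$-relation'' and the chainability/folding-pattern machinery of \cite{BBS} to the core. Concretely: pick the arc-component $R$ through $\bar 0$ on the $s$-side; its $h$-image is an arc-component $\tilde R$ (or a finite union we can reduce to one) on the $\tilde s$-side; along $R$ there is an increasing sequence of ``salient'' points at which the folding behavior is canonical, and these must map, up to bounded error and up to a finite initial segment, to the salient points along $\tilde R$. Comparing the exponential growth rates of arc-length between successive salient points on the two sides forces $\log s=\log\tilde s$, i.e.\ $s=\tilde s$, contradicting $s<\tilde s$. The non-preperiodicity assumption (i.e.\ $c$ is not periodic either, since non-recurrent and preperiodic would make $c$ eventually periodic hence in a periodic orbit) is used to ensure that the inverse limit space is not of a degenerate type (e.g.\ not having a simpler, finitely-describable structure) and that the salient-point sequence is infinite and aperiodic, so the growth-rate comparison is meaningful.

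The main obstacle I expect is the second step: establishing that the slope is genuinely encoded in the local topological structure of the core without reference to $\Cc$. In the non-core case, $\Cc$ provides a canonical ``basepoint at infinity'' along which one reads off the folding pattern unambiguously; in the core one must instead show that the finitely many special arc-components, equipped with their intrinsic length-parametrization (recovered topologically, e.g.\ as a suitable limit of Hausdorff-type distances inside arc-components), are preserved by any homeomorphism and that their turn-counting function is a topological invariant. Making the ``bounded number of turns per unit length'' estimate uniform, and showing it is inherited by $h$-images (which a priori could be wild arcs), is the technical heart; here the non-recurrence hypothesis, via a positive lower bound on $\inf_n|c_n-c|$ and the resulting uniform hyperbolicity-like estimates on the core, is exactly what makes the argument go through, and is presumably why the theorem is stated under that hypothesis rather than in full generality.
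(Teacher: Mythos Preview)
Your proposal contains a basic factual error and a strategic gap that would prevent the argument from getting off the ground.

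First, the point $\bar 0=(\dots,0,0,0)$ is \emph{not} in the core: for $s>1$ we have $c_2=s(1-s/2)>0$, so $0\notin[c_2,c_1]$ and $\bar 0\notin\varprojlim([c_2,c_1],T_s)$. The fixed point that \emph{does} lie in the core is $r=\frac{s}{s+1}$, and the paper works with the arc-component $\Rr\ni\rho=(\dots,r,r,r)$.

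Second, and more seriously, your plan to single out ``special'' arc-components as those along which the space fails to be locally $\R\times(\text{Cantor set})$ does not work here. In the non-recurrent (and non-preperiodic) case the core has \emph{no endpoints} and is locally homogeneous in exactly this sense; there is no topological marker that picks out $\Rr$ or any finite collection of arc-components for free. This is precisely why the core problem is harder than the non-core one, where $\Cc$ is distinguished by containing the unique endpoint $\bar 0$. The paper spends all of Section~\ref{sec:Rfixed} proving Theorem~\ref{thm:R}, namely that any homeomorphism must send $\Rr$ to $\tilde\Rr$, and this is the technical heart. The argument is not via local product structure but via $\eps$-symmetry: one builds a complete sequence $\{A_i\}$ of nested $k$-symmetric arcs around $\rho$, shows (using non-recurrence through Proposition~\ref{prop:eps-symmetric} and especially Lemma~\ref{lem:12}) that if $h(\Rr)\neq\tilde\Rr$ then $h(\Rr)$ could not contain an arc with $\hat l$-pattern $12$, contradicting density.

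Finally, the paper does not extract the slope from asymptotic growth of arc-length; arc-length is not a topological invariant, and a homeomorphism could distort it wildly. Instead, once $h(\Rr)=\tilde\Rr$ is established, the argument is purely combinatorial: salient $k$-points $m_i$ map close to salient $l$-points $\tilde m_{i+M}$ with a fixed shift $M$ (Proposition~\ref{prop}), and an inductive comparison of $k$-patterns on $[m_{n-1},m_n]$ versus $(l+M)$-patterns on $[\tilde m_{n-1},\tilde m_n]$ forces the kneading sequences, hence the slopes, to coincide. Your growth-rate heuristic points in the right direction morally, but the actual mechanism is counting $k$-points of each level and matching patterns, not measuring lengths.
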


If $T_s$ has a non-recurrent critical orbit, then  $\varprojlim([c_2,c_1], T_{s})$ has no endpoints. However, if the critical orbit is recurrent, $\varprojlim([c_2,c_1], T_{s})$ has endpoints (finitely many if the critical point is periodic and infinitely many if the critical orbit is infinite). For details see \cite{BaMa}. Thus, recurrent and non-recurrent case can be topologically distinguished.

Solutions to the Core Ingram Conjecture for tent maps lead to the similar conclusion for analogous question for the ''fuller'' family of unimodal maps, 
see \cite{BaDa} for details.
It turns out that Theorem~\ref{CIC} can be reduced from the case where slopes $s,\tilde{s}\in (1,2]$ to slopes $s,\tilde{s}\in (\sqrt{2},2]$, for details see \cite{BBS}.

There exist two fixed points of $T_s$: $0$ and $r:=\frac{s}{s+1}\in [c_2,c_1]$.  The \emph{arc-component} of a point $e\in \varprojlim([0,1], T_{s})$ is defined as the union of all arcs of $\varprojlim([0,1], T_{s})$ containing $e$. Let us denote the arc-component from $\varprojlim([c_2,c_1], T_{s})$ that contains $\rho:=(\ldots,r,r,r)$ by $\Rr$. It is a continuous image of the real line and is dense in both directions.
Let $\tilde\Rr \subset \varprojlim([c_2,c_1], T_{\tilde{s}})$
be the analogous arc-component to $\Rr$
that contains $\tilde \rho = (\dots, \tilde r, \tilde r, \tilde r)$
for the fixed point $\tilde{r}:=\frac{\tilde{s}}{\tilde{s}+1}$. 
The main new ingredient in this paper is:

\begin{theorem}\label{thm:R}
Let $\sqrt{2} \leq s\leq \tilde{s}\leq 2$ and assume that the
critical points of $T_s$ and $T_{\tilde{s}}$ are non-recurrent. Let $\Rr\subset \CUIL$ and $\tilde{\Rr}\subset \CUILt$ be as above. If $h:\CUIL \to \CUILt$
is a homeomorphism, then $h(\Rr)=\tilde{\Rr}$.
\end{theorem}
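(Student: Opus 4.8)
The plan is to characterize $\Rr$ by a property that refers only to the topology of $X:=\CUIL$, and then to read off the analogous property inside $\tilde X:=\CUILt$; since $h$ is a homeomorphism it must carry the arc-component distinguished by that property in $X$ onto the one distinguished in $\tilde X$, so it will remain only to check that these are $\Rr$ and $\tilde\Rr$.

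First I would fix the homeomorphism-invariant scaffold. Call $x\in X$ a \emph{folding point} if $X$ is not locally an arc at $x$; by the structure theory of tent inverse limits with non-recurrent critical point this set equals $\F=\{x\in X:\pi_n(x)\in\omega(c)\ \text{for all}\ n\ge 0\}$, a closed, nowhere dense subset of $X$. Being a folding point is a topological property, so any homeomorphism $h:X\to\tilde X$ satisfies $h(\F)=\tilde\F$ and carries arc-components onto arc-components; moreover, since non-recurrence excludes endpoints (as recalled above), every arc-component of either space is a dense line.

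Secondly, and this is the heart of the matter, I would single out $\Rr$ by the self-similar structure of its ends. Let $\hat\sigma$ be the shift homeomorphism of $X$. It fixes the point $\rho$ lying over the fixed point $r$, satisfies $\hat\sigma(\Rr)=\Rr$, and permutes the salient points of $\Rr$ among themselves; consequently $\Rr$ is exhausted by nested arcs $A_1\subset A_2\subset\cdots$ around $\rho$, with $\bigcup_n A_n=\Rr$, such that each $A_{n+1}$ is obtained from $A_n$ by adjoining a single ``fold'' and $A_{n+k}$ is, after a suitable homeomorphic rescaling, a copy of $A_n$. I would promote this into an intrinsic statement: $\Rr$ is the arc-component of $X$ carrying such a self-similar exhaustion whose elementary step has the \emph{least possible} complexity, measured by the factor by which the (intrinsically defined) number of folds of $A_n$ grows from one stage to the next. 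Applying the same characterization in $\tilde X$ then identifies $h(\Rr)$ as the arc-component of $\tilde X$ admitting a least-complexity self-similar exhaustion, namely $\tilde\Rr$.

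The hard part is the uniqueness in that last step. The genuine competitors are the arc-components invariant under a power $\hat\sigma^p$ with $p\ge 2$ (for instance those lying over periodic orbits): these too carry self-similar exhaustions, but the elementary step of such an exhaustion packs $p$ units of folding, so its growth factor is correspondingly larger. One must show this cannot be beaten, which comes down to a lower bound on the fold-growth along \emph{any} arc-component together with the impossibility, when $c\notin\omega(c)$, of a ``period-one'' self-reproduction along an arc-component other than $\Rr$. This is where the combinatorics of the non-recurrent kneading map enters: along a generic arc-component one only ever meets tails of itineraries of points of $X$, which do not organize into a period-one self-similar exhaustion, whereas along $\Rr$ the folding data are literally assembled from the itinerary of the fixed point $r$ and do. Two further technical points must be dealt with: the salient points are canonical only asymptotically, so every comparison has to tolerate a bounded index shift $n\mapsto n+N$; and the case $r\in\omega(c)$, in which $\rho$ is itself a folding point, must be handled by inspecting the local structure of $X$ at $\rho$ to confirm that $\Rr$ is still the arc-direction there and that the self-similar exhaustion survives. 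Putting all of this together yields $h(\Rr)=\tilde\Rr$.
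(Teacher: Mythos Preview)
Your proposal has a genuine gap: the central invariant you try to use is never made precise enough to carry the argument, and the uniqueness step is not established.

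First, the notion of ``self-similar exhaustion with least fold-growth'' is not well-defined as a topological invariant. You say that $A_{n+k}$ is, ``after a suitable homeomorphic rescaling, a copy of $A_n$'' --- but any two arcs are homeomorphic, so this says nothing. What you presumably mean is something about the embedding of $A_n$ in $X$ (its link-pattern in some chain), but then ``number of folds'' and ``growth factor'' depend on the chosen chain $\chain_k$, and you give no argument that different chains, or the image chain $h(\chain_k)$ in $\tilde X$, yield comparable numbers. Second, and more seriously, your uniqueness claim is not proved. You assert that ``the genuine competitors are the arc-components invariant under a power $\hat\sigma^p$,'' but an arbitrary homeomorphism $h$ need not intertwine the shifts, so $h(\Rr)$ can a priori be \emph{any} dense arc-component of $\tilde X$, not just a shift-periodic one. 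You then dismiss the generic arc-components with the sentence ``one only ever meets tails of itineraries \dots\ which do not organize into a period-one self-similar exhaustion''; this is exactly the statement that needs proof, and it is where the non-recurrence hypothesis must do real work.

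The paper takes a completely different route. It does not attempt to characterize $\Rr$ by an intrinsic minimality property. Instead it argues by contradiction: if $q:=h(\rho)\neq\tilde\rho$, choose $\hat l>l$ so that the backward orbit of $q$ crosses $\tilde c$ between levels $\hat l$ and $\hat l+1$. The key technical input is Lemma~\ref{lem:12}, a rigidity statement (valid precisely under non-recurrence) saying that if an arc has the same $l$-pattern as an arc with $\hat l$-pattern $12$, then it must itself have $\hat l$-pattern $12$. One then takes the arc $Q\subset h(\Rr)$ with $\hat l$-pattern $12$ closest to $q$, reflects it across the midpoint $n_j$ of the smallest $B_j=h(A_j)$ meeting it (Lemma~\ref{lem:Qint} guarantees $Q\subset(n_j,n_{j+2})$, so the reflection lands strictly closer to $q$), and obtains by Lemma~\ref{lem:12} a closer arc with $\hat l$-pattern $12$ --- a contradiction. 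Hence $h(\Rr)$ contains no arc with $\hat l$-pattern $12$, so by Lemma~\ref{lem:dense} it is not dense, contradicting the density of $\Rr$. The non-recurrence is used in a very concrete place (Lemma~\ref{lem:12} and Corollary~\ref{cor:2}), not as a vague obstruction to ``period-one self-reproduction.''
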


The main observation in our proof of this result
is Lemma~\ref{lem:12} which implies that when critical point is non-recurrent, two arcs with different $\hat{l}$-pattern also have different $l$-pattern. Lemma~\ref{lem:12} fails without the assumption that critical point is non-recurrent and this presents the main obstacle in the proof of the Core Ingram Conjecture with our approach for the case when critical point is recurrent and not periodic.

In the process of proving the Ingram Conjecture, 
partial solutions of the Core Ingram Conjecture were obtained as well.
The first result is due to Kailhofer \cite{Kail2} from 2003, who proved the Core Ingram Conjecture for the case when critical point is periodic. In 2006, Good \& Raines proved in \cite{GoRa} that the conjecture holds when critical point is non-recurrent and $\omega (c)$ is a Cantor set. However, the technique they used cannot 
be extended to all non-recurrent tent-maps as we do in this paper.
In 2007, \v Stimac \cite{Stim1} extended the mentioned result of Kailhofer and proved the Core Ingram Conjecture in the case when critical orbit is finite. Both Kailhofer and \v Stimac make use of a dense arc-component inside the core of the inverse limit space,
but not the above mentioned arc-component $\Rr$.
The most recent result regarding the Core Ingram Conjecture was obtained in 2015 by Bruin \& \v Stimac \cite{FL} who proved that conjecture holds for a set of parameters where critical point is ''extremely'' (or persistently) recurrent and not periodic. 
The last result was obtained from observations on the arc-component $\Rr$.

In this paper we prove the Core Ingram Conjecture when the critical point
is non-recurrent.
The main idea of the proof is similar as in the proof of the Ingram Conjecture in \cite{BBS}. There, the authors first assume by contradiction that there exists a 
homeomorphism between $\varprojlim([0,1], T_{s})$ and $\varprojlim([0,1], T_{\tilde{s}})$, where $s\neq \tilde{s}\in (\sqrt{2},2]$ and then it clearly follows that 
the arc-component $\Cc\subset \varprojlim([0,1], T_{s})$ maps to the arc-component $\tilde{\Cc}\subset \varprojlim([0,1], T_{\tilde{s}})$, where $\bar{0}\in \tilde{\Cc}$. 
Then they show that the concatenation of maximal link-symmetric arcs uniquely determines $\Cc$ and thus the whole $\varprojlim([0,1], T_{s})$.

The following result about the group of self-homeomorphisms 
extends as well. The proof requires only minor adjustments:
one needs to replace the arc-component $\Cc$ with $\Rr$ in the proof 
of \cite[Theorem 1.3]{BS}).

\begin{theorem}\label{thm:iso}
Assume that $T_s$ has a non-recurrent critical point.
Then for every self-homeomorphism $h:\CUIL \to \CUIL$ there is $R \in \Z$
such that $h$ and $\sigma^R$ are isotopic.
\end{theorem}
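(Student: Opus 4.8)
The plan is to carry over the proof of \cite[Theorem~1.3]{BS} to $\CUIL$, letting the dense arc-component $\Rr$ play the role that the ray $\Cc$ plays in the non-core setting; as in \cite{BBS} we first reduce to $s\in(\sqrt2,2]$. The first step is to invoke Theorem~\ref{thm:R} with $\tilde s=s$, so that $\tilde\Rr=\Rr$: any self-homeomorphism $h:\CUIL\to\CUIL$ satisfies $h(\Rr)=\Rr$. This is the one substantive input beyond \cite{BS}, replacing the fact used there that a self-homeomorphism necessarily fixes $\Cc$ (forced in the non-core case because $\Cc$ is the unique arc-component with a free end). I would then recall the structure of $\Rr$: it is a continuous one-to-one image of $\R$, dense on both ends, and in the natural parametrization $\gamma:\R\to\Rr$ by $p$-points and folding pattern the shift $\sigma$ acts as a combinatorial \emph{translation}, advancing the pattern along $\Rr$ by a fixed amount.

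The second step is to analyze $h|_\Rr$. Transported through $\gamma$ it is a homeomorphism of $\R$, hence monotone. Using the combinatorics of maximal link-symmetric arcs and $p$-points from \cite{BBS, BS}, together with Lemma~\ref{lem:12} (which, under non-recurrence, recovers the $\hat l$-pattern from the $l$-pattern), I would show that $h|_\Rr$ preserves orientation and that there is a unique $R\in\Z$ with the property that $g:=\sigma^{-R}\circ h$ maps $\Rr$ to itself \emph{boundedly}: every $p$-point of $\Rr$ is moved within a uniformly bounded chain-distance. The integer $R$ is the net displacement of the $p$-point pattern of $\Rr$ induced by $h$.

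The third step is to upgrade this to an ambient isotopy of $\CUIL$ carrying $g$ to the identity, exactly as in \cite{BS}. Since $\Rr$ is dense and every sufficiently short arc of $\CUIL$ carries the same local link pattern as a nearby subarc of $\Rr$, the $\Rr$-level information propagates: one builds the isotopy on a sequence of chains $\chain_k$ with $\mesh(\chain_k)\to0$, checking compatibility of the partial isotopies at each stage so that the limit is a continuous ambient isotopy. Composing gives that $h$ is isotopic to $\sigma^R$.

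The hard part --- the reason this needs an argument rather than a one-line remark --- is auditing the places in \cite{BS} where the proof genuinely uses that $\Cc$ is a ray with an endpoint at $\bar 0$ rather than just a dense line. Two such points stand out. First, the definition of $R$: along $\Cc$ the endpoint $\bar 0$ supplies a canonical origin from which to count $p$-points, whereas $\Rr$ has no endpoint, so one must fix a \emph{baseline} subarc around $\rho$ and count relative to it, then verify the count is well-defined and $h$-equivariant. Second, orientation: a self-homeomorphism of the ray $\Cc$ automatically preserves orientation, but on the line $\Rr$ this must be established, and it follows from the asymmetry of the folding pattern. Both points are exactly where non-recurrence of the critical point is used: it guarantees $\CUIL$ has no endpoints (by \cite{BaMa}), so $\Rr$ is genuinely a line and the relevant arc combinatorics is rigid, while Lemma~\ref{lem:12} turns the $p$-point count into a homeomorphism invariant. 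With this bookkeeping in place the remainder is the verbatim argument of \cite{BS}.
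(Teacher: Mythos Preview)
Your proposal is correct and follows essentially the same approach as the paper: the paper's own proof is a one-sentence remark that the argument of \cite[Theorem~1.3]{BS} goes through once the arc-component $\Cc$ is replaced by $\Rr$, and your proposal is a reasonable elaboration of what that replacement entails (invoking Theorem~\ref{thm:R} to get $h(\Rr)=\Rr$, then running the \cite{BS} machinery). Your identification of the two places where the ray-versus-line distinction matters---the choice of baseline for defining $R$ and the orientation question---is apt, and the machinery of Section~\ref{sec:CIC} (salient points, Proposition~\ref{prop}, and the level-shift $M$ of Proposition~3) supplies exactly the bookkeeping you describe.
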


Let us give a short outline of the structure of the paper. In Section~\ref{sec:pre} we provide a basic set-up of tent maps, their inverse limit spaces and chainability. 
In Section~\ref{sec:R} we study structure of the arc-component $\Rr$.
In Section~\ref{sec:Rfixed} we prove Theorem~\ref{thm:R}. 
In Section~\ref{sec:CIC} we prove that the concatenation of maximal 
link-symmetric arcs uniquely determines $\Rr$ and thus also 
$\varprojlim([c_2,c_1], T_{s})$ for every $s\in (\sqrt{2},2]$.

\section{Preliminaries}\label{sec:pre}
\subsection{Tent maps}
Let $\N:= \{ 1,2,3,\ldots\}$ and $\N_{0}:=\{ 0,1,2,3,\ldots\}$. We define a \emph{tent map} $T_{s}:[0,1] \rightarrow [0,1]$ with slope 
$\pm s$ as $T_{s}(x)=\text{min}\{sx,s(1-x)\}$ and we restrict to $s\in(\sqrt{2},2]$. 
 Thus in particular $c_{1}=s/2$ and $c_{2}=s(1-s/2)$. 
We call the interval $[c_2,c_1]$ the \emph{core} of $T_s$.  
We restrict $T_{s}$ to the interval $I:=[0,s/2]$. 
Throughout the paper we will assume that $T_s$ has an infinite critical orbit,
because the Core Ingram Conjecture has already been proven 
for the case when $c$ is (pre)periodic, see \cite{Kail2, Stim2}.

We say that $x\in [0,1]$ is a \emph{turning point} of $T_{s}^{r}$, if there exists $m<r$ such that $T_{s}^{m}(x)=c$.
Two turning points $x,y\in [0,1]$ of $T_{s}^r$ are \emph{adjacent} if 
$T_s^{r}\mid_{[x,y]}$ is monotone.

Critical point of $T_s$ is \emph{recurrent} if for every $\eps>0$ there exists $n\in\N$ such that $|c-c_n|<\eps$. Tent map $T_s$ is called \emph{long-branched} if there exists 
$\delta>0$ such that for every $n\in\N$ if $x,y$ are two adjacent turning points of $T_s^n$, then $|T_s^n(x)-T_s^n(y)|>\delta$. Note that if $T_s$ has a non-recurrent critical point, 
then $T_s$ is long-branched.

Let $\hat{b}:=1-b$ denote the \emph{symmetric point around $c$}, where $b,\hat{b}\in [c_2,c_1]$.

\begin{lemma}\label{lem:Mis}
Let $x<y$ be adjacent turning points of $T_{s}^{r}$. Then there exists $z>y$ such that $T_{s}^{r}([y,z])=[T_{s}^{r}(x),T_{s}^{r}(y)]$.
\end{lemma}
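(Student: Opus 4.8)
The plan is to treat this as a statement about how the graph of $T_s^r$ behaves just to the right of a turning point. Since $x<y$ are \emph{adjacent} turning points of $T_s^r$, the map $T_s^r$ is monotone on $[x,y]$, so $T_s^r([x,y])$ is exactly the interval with endpoints $T_s^r(x)$ and $T_s^r(y)$; write $J:=[T_s^r(x),T_s^r(y)]$ (or its reverse, depending on orientation). The key observation is that $y$ being a turning point of $T_s^r$ means there is a least $m<r$ with $T_s^m(y)=c$, and at such a point the graph of $T_s^r$ has a local extremum: immediately to the right of $y$ the map $T_s^r$ reverses the direction of monotonicity it had on $[x,y]$. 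I would first isolate this local picture: there is some maximal interval $[y,z']$ on which $T_s^r$ is monotone, with monotonicity opposite to that on $[x,y]$, and $T_s^r(y)$ is a local extremum (a max if $T_s^r$ was increasing on $[x,y]$, a min otherwise).

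The heart of the argument is then to show that on $[y,z']$ the map $T_s^r$ ``comes back far enough'' to cover all of $J$, i.e. that $T_s^r([y,z'])\supseteq J$; then one simply takes $z\le z'$ to be the first point with $T_s^r(z)=T_s^r(x)$, which exists by the intermediate value theorem and gives $T_s^r([y,z])=J$ exactly. To see the covering, I would push the statement down to the level of $T_s$ itself by factoring $T_s^r = T_s^{\,r-m}\circ T_s^{\,m}$, where $m<r$ is the largest index with $T_s^m(y)=c$ (equivalently, work with the last turning before $y$). Near $y$, $T_s^m$ maps $[x,y]$ monotonically onto an interval with endpoint $c=c_0$, and maps $[y,z']$ monotonically onto an interval on the \emph{other} side of $c$; since $T_s$ has slope $\pm s$ with $s>\sqrt2>1$, the branch of $T_s$ near $c$ expands, and one checks that the image under $T_s^m$ of a suitable right-neighbourhood $[y,z']$ of $y$ contains the reflection across $c$ of $T_s^m([x,y])$, i.e. it contains $\widehat{T_s^m([x,y])}$ in the notation $\hat b = 1-b$. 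Applying the remaining iterate $T_s^{\,r-m}$, and using that $T_s^{\,r-m}$ restricted to the relevant interval is monotone with the reflected image mapping onto the same set $J$ (because $T_s^{\,r-m}(c_0\text{-side})$ and $T_s^{\,r-m}(\hat{}\,\text{-side})$ agree after the fold), we get $T_s^r([y,z'])\supseteq T_s^r([x,y]) = J$, as needed.

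The main obstacle I anticipate is bookkeeping the monotonicity branches and the fold at $c$ correctly: one must make sure that the interval $[y,z']$ of monotonicity for $T_s^r$ to the right of $y$ is genuinely long enough that its $T_s^m$-image reaches past the reflection of $T_s^m([x,y])$ — equivalently, that no \emph{earlier} turning point of $T_s^r$ (one with index $<m$) intervenes in $[y,z']$ before the image has grown large enough. This is where one uses that $x$ and $y$ are \emph{adjacent} turning points: there is no turning point of $T_s^r$ strictly between them, which pins down that the first fold to the right of $y$ is exactly at $y$ itself (index $m$), and the next fold only occurs once $T_s^m$ has carried the interval back across $c$. A clean way to organize this is by induction on $r$, or on the number $r-m$ of remaining iterates after the last fold at $y$: for $r-m=0$ the statement is the elementary fact that $T_s$ near $c$ maps a left-neighbourhood and a right-neighbourhood of $c$ onto overlapping intervals (indeed onto intervals both having $c_1$ as an endpoint), and the inductive step applies the one further iterate $T_s$ and tracks the image. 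I would also remark that Lemma~\ref{lem:Mis} is essentially Misiurewicz's observation (hence the label), so an alternative is to cite the standard kneading-theory fact that adjacent laps of $T_s^r$ map onto nested intervals; but giving the short self-contained induction above keeps the section elementary.
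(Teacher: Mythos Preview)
Your approach differs from the paper's in a fundamental way, and unfortunately contains a genuine error. You interpret the lemma as saying that the \emph{first} monotone branch $[y,z']$ of $T_s^r$ to the right of $y$ already covers the range $J = T_s^r([x,y])$, and you organize the whole argument around proving $T_s^r([y,z']) \supseteq J$. But this is false. Take $r=3$ and the adjacent turning points $x = 1/(2s)$ and $y = (2s-1)/(2s^2)$, for which $T_s^3(x)=c_2$ and $T_s^3(y)=c_1$. The next turning point to the right of $y$ is $c=1/2$, with $T_s^3(c)=c_3$; the branch $[y,c]$ therefore has range $[c_3,c_1]$, and since $c_3 = s\,c_2 > c_2$ this does \emph{not} contain $[c_2,c_1]$. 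The reflection heuristic breaks down here precisely because the monotone $T_s^m$-branch containing $y$ (here $m=2$, so $T_s^2(y)=c$) is asymmetric about $y$: it runs from $1/(2s)$ on the left to $1/2$ on the right, but $y$ is not its midpoint, so $T_s^2([y,1/2])$ does not contain the reflection of $T_s^2([x,y])$ across $c$. Your proposed induction on $r$ or on $r-m$ cannot repair this, since the statement you are trying to establish inductively is itself false; note also that the stated base case $r-m=0$ is vacuous, as one always has $m<r$.

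The paper proceeds quite differently. It argues by contradiction and reduces to excluding a specific ``forbidden'' $N$-shaped configuration: three consecutive monotone branches in which \emph{both} outer branches strictly overshoot the middle one (that is, $T_s^r(a)<T_s^r(d)<T_s^r(b)<T_s^r(e)$ with $b,d$ the only turning points in $(a,e)$). This is a strictly stronger hypothesis than ``the next branch is too short''; in the example above the branch after $[y,c]$ rises only back to $c_1 = T_s^r(y)$, never strictly past it, so the forbidden pattern does not arise there and no contradiction is expected. The paper then rules out the forbidden pattern by looking at $T_s^m$ with $T_s^m(b)=c$ and comparing $|T_s^m(a)-c|$ with $|T_s^m(e)-c|$: whichever side is shorter gets reflected through $c$ into the longer branch, forcing an extra turning point of $T_s^r$ in $(a,e)$ and giving the contradiction. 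The symmetry of $T_s$ about $c$ is indeed the engine of the proof---you identified that correctly---but it must be deployed against the two-sided forbidden pattern, not as a one-sided claim about the very next branch.
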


\begin{proof}
Assume the contrary.
This leads to the following simplified statement that we need to exclude.
Let $a<b<d<e \in [0,1]$, where $b$ and $d$ are turning points of 
$T_{s}^{r}$ and  $T_{s}^{r}$ has no  other turning point in $(a,e)$. 
Assume without loss of generality that $T_{s}^{r}(a)<T_{s}^{r}(d)<T_{s}^{r}(b)<T_{s}^{r}(e)$, see Figure~\ref{fig:Not_allowed}. 
Let us assume also without loss of generality that there exists $m<n<r$ such that $T_{s}^{m}(b)=c=T_{s}^{n}(d)$. We consider the 
image of $[a,e]$ under $T_{s}^{m}$.

\textbf{Case I:} Let $|T_{s}^{m}(a)-c| \geq |T_{s}^{m}(e)-c|$. 
This means that $T_{s}^{m}(e) \in [c, \widehat{T_{s}^{m}(a)}]$,
say without loss of generality that 
$c < T_{s}^{m}(e) \leq \widehat{T_{s}^{m}(a)}$. 
Consequently, there is a point $x \in (a,b)$
 such that $T_{s}^{m}(x) = \widehat{T_{s}^{m}(d)}$,
 but then $T^n_s(x) = T^n_s(d) = c$, contradicting that
$(a,b)$ contains no turning point of $T_s^r$.

\begin{figure}[ht]
\unitlength=9mm
\begin{picture}(10,5)(1,-0.5)
\put(4,0,5){\line(0,1){4}}
\put(3.9,1.6){\line(1,0){0.2}}
\put(3.9,3.1){\line(1,0){0.2}}
\put(3.875,4.5){\line(1,0){0.25}}
\put(3.875,0.5){\line(1,0){0.25}}
\put(5,0){\line(1,0){4}}
\put(5,-0.125){\line(0,1){0.25}}
\put(9,-0.125){\line(0,1){0.25}}
\put(6.5,-0.1){\line(0,1){0.2}}
\put(7,-0.1){\line(0,1){0.2}}

\put(5,-0,5){\small $0$}
\put(6.4,-0,5){\small $x$}
\put(6.9,-0,5){\small $y$}
\put(9,-0,5){\small $1$}
\put(3.5, 0.35){\small $0$}
\put(2.75, 1.5){\small $T_{s}^r(y)$}
\put(2.75, 3.0){\small $T_{s}^r(x)$}
\put(3.5, 4.35){\small $1$}
\thicklines
\put(5.7,0.7){\line(1,3){0.8}}
\put(7,1.6){\line(-1,3){0.5}}
\put(7,1.6){\line(1,3){0.8}}
\end{picture}
\caption{Example of a pattern that is not allowed by Lemma~\ref{lem:Mis}.}
\label{fig:Not_allowed}
\end{figure}
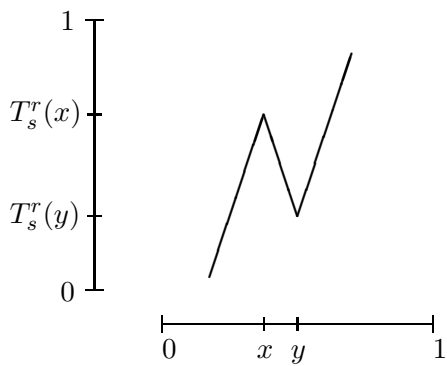

\textbf{Case II:} Let $|T_{s}^{m}(a)-c| < |T_{s}^{m}(e)-c|$. 
This means that (without loss of generality)
$c<\widehat{T_{s}^{m}(a)}<T_{s}^{m}(e)$. 
Consequently, there is a point $y \in (b,e)$ such that 
$T_{s}^{r}(y)<T_{s}^{r}(d)$ which is a contradiction with $d$ being the minimum of $T_{s}^{r}$ in $(b,e)$.
\end{proof}

\subsection{Inverse limits and chainability}
The \emph{inverse limit space} $\varprojlim(I, T_{s})$ is the collection of all backward orbits
$$
\varprojlim(I, T_{s}):=\{ e=(\ldots, e_{-2},e_{-1},e_{0}): T_{s}(e_{i-1})=e_{i}\in I \text{ for all } i\leq0 \},
$$
equipped with the product topology
and the \emph{shift homeomorphism} 
$$ 
\sigma(\ldots, e_{-2},e_{-1},e_{0})=(\ldots,e_{-2},e_{-1},e_{0},T_{s}(e_{0}))
$$ 
for every $e=(\ldots,e_{-2},e_{-1},e_{0})\in \varprojlim(I, T_{s})$.
We call $\varprojlim([c_{2},c_{1}], T_{s})$ the \emph{core of the inverse limit space}.

For $k \in \N_0$, define the \emph{k-th projection map} as 
$\pi_{k}: \varprojlim(I, T_{s}) \rightarrow I$, $\pi_{k}(e)=e_{-k}$. 
We denote an arbitrary arc-component by $\Uu$ and the arc-component that contains
a fixed point $\bar{0}=(\ldots, 0,0,0)$ by $\Cc$.
In this paper we mostly study the arc-component $\Rr$ associated with the other fixed point
$r:=\frac{s}{s+1}\in[c_{2},c_{1}]$ of $T_{s}$, so $\rho:=(\ldots,r,r,r)\in  \Rr$. 
Observe that $\rho \in\Rr\subset \varprojlim([c_{2},c_{1}], T_{s})$, while $\mathfrak{C}\nsubseteq \varprojlim([c_{2},c_{1}], T_{s})$.

\begin{definition}
The \emph{arc-length} of two points $x, y\in\Uu$ is defined as $$d(x, y):= s^k|x_{-k}-y_{-k}|,$$ where $k\in\mathbb{N}_0$ is such that $\pi_k\colon [x, y]\to[c_2, c_1]$ is injective.
\end{definition}

\begin{definition}\label{def:chain}
A space $X$ is \emph{chainable} if there are finite open covers $\chain:=\{ \ell_{i} \}_{i=1}^n$, called \emph{chains}, of arbitrarily small $\mathrm{mesh}(\chain):=\mathrm{max}_{i\in\{1,\ldots, N \}} \mathrm{diam}(\ell_{i})$ such that the \emph{links} $\ell_{i}$ 
satisfy $\ell_{i} \cap \ell_{j}\neq \emptyset$ if and only if $|i-j|\leq 1$.
Clearly the interval $[0,s/2]$ is chainable. We call $\chain_{k}$ a 
\emph{natural chain} of $\varprojlim([c_{2},c_{1}], T_{s})$ 
if for $j\in \{1,\ldots, n \}$ the following is true:
\begin{enumerate}
 \item There exists a chain $\{I_{k}^{1},I_{k}^{2}, \ldots, I_{k}^{n}\}$ of $I=[0,s/2]$ such that $\ell_{k}^{j}:=\pi_{k}^{-1}(I_{k}^{j})$ are links of $\chain_{k}$. 
 \item Each point $x\in \bigcup_{i=0}^{k}T_{s}^{-i}(c)$ is a boundary point of some link $I_{k}^{j}$.
 \item For each $i$ there is $j$ such that $T_{s}(I_{k+1}^{i})\subset I_{k}^{j}$.
\end{enumerate}
\end{definition}

We say that chain $\chain'$ \emph{refines} chain $\chain$ (written $\chain'\preceq \chain$) if for every link $\ell' \in \chain'$ there is a link $\ell\in \chain$ such that 
$\ell'\subset \ell$. Condition (3) ensures that $\chain_{k+1} \preceq \chain_k$.

\subsection{Patterns and symmetry}

\begin{definition}\label{def:k-point}
A point $x \in \CUIL$ is called a \emph{$k$-point} (with respect to the chain $\chain_k$) if there exists $n \geq 1$ such that $\pi_{k+n}(x)=c$. Note that if $c$ is not periodic, 
such $n$ is unique and we call it the \emph{$k$-level} of $x$ and denote it by $L_{k}(x)$.
\end{definition}

\begin{definition}\label{def:pattern}
Let $A$ be an arc in $\CUIL$ and assume that the number of $k$-points in $A$ is finite. Let $x_0 < x_1 < \ldots < x_N$
be the $k$-points on $A$ arranged according to the arc-length distance defined above.
Then the list of levels $L_k(x_0), L_k(x_1), \ldots, L_k(x_N)$
is called the {\em $k$-pattern} of $A$.
\end{definition}

\begin{remark}\label{rem:pattern}
From the Definition~\ref{def:pattern} it follows that $A$ is the concatenation of arcs $[x_{j-1}, x_j]$ with pairwise disjoint interiors and
$\pi_k$ maps $[x_{j-1}, x_j]$ bijectively onto $[c_{L_k(x_{j-1})}, c_{L_k(x_j)}]$.
Equivalently, if $i \in \N_0$ is such that $\pi_{k+i}:A\to \pi_{k+i}(A)$ is 
injective, then the graph $T^i|_{\pi_{k+i}(A)}$ has the same $k$-pattern as $A$.
That is, $T^i$ has turning points $\pi_{k+i}(x_0) <  \ldots 
< \pi_{k+i}(x_N)$ in
$\pi_{k+i}(A)$ and $T^i(\pi_{k+i}(x_j)) = c_{L_k(x_j)}$ for $0 \le j \le N$.
We will call this the $k$-pattern of $T^i|_{\pi_{k+i}(A)}$ as well.
\end{remark}

\begin{example}
The arc $A$ as in Figure~\ref{fig:312a} has a $k$-pattern $312$.
 
\begin{figure}[ht]
\unitlength=9mm
\begin{picture}(10,3)(1,-0.5)
\put(13,2){\line(-1,0){8}} \put(5, 2.25){\oval(0.5,0.5)[l]}
\put(1,1.5){\line(1,0){12}} \put(13, 1.75){\oval(0.5,0.5)[r]}
\put(1, 1.25){\oval(0.5,0.5)[l]}

\put(1,0){\line(1,0){12}}
\put(1,-0,25){\line(0,1){0,5}}
\put(13,-0,25){\line(0,1){0,5}}
\put(5,-0,2){\line(0,1){0,4}}

\put(-0.5,2){\vector(0,-1){2}}\put(-0.3,1){\large $\pi_{k}$}
\put(1,-0,5){\small $c_2$}
\put(5,-0,5){\small $c_3$}
\put(13,-0,5){\small $c_1$}
\put(3, 2){\small $A$}
\end{picture}
\caption{Arc $A$ with $k$-pattern 312}
\label{fig:312a}
\end{figure}
 \end{example}

\begin{definition}\label{def:sym}
We say that an arc $A:=[e, e']\subset \UIL$ is \emph{$k$-symmetric} 
if $\pi_k(e)=\pi_k(e')$ and its $k$-pattern is a palindrome.
\end{definition}

\begin{remark}
Definition~\ref{def:sym} implies that the $k$-pattern of $(e,e')$, where $A$ is a $k$-symmetric arc, is of odd length and the letter 
in the middle is the largest. This can be easily seen by considering the smallest $j>k$ such that $\pi_j:A \to [c_2, c_1]$ is injective.
\end{remark}

\begin{definition}
	Take an arc-component $\Uu$ and a natural chain $\chain_k$ of $\varprojlim([c_{2},c_{1}], T_s)$. For a $k$-point $u\in\Uu$ such that $u\in \ell\in\chain_k$ we denote the arc-component of $\ell\cap\Uu$ containing $u$ by $A^u$.
\end{definition}

\begin{remark}
 Note that the definition above makes sense since every $k$-point is contained in exactly one link of the natural chain $\chain_k$.
\end{remark}

\begin{lemma}\label{lem:dense}
Let $P$ be a $k$-pattern that appears somewhere in $\UIL$, \ie
there is an arc $A \subset \UIL$ with $k$-pattern $P$.
If arc-component $\Uu$ contains no arc with $k$-pattern $P$,
then $\Uu$ is not dense in $\UIL$.
\end{lemma}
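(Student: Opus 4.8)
The plan is to argue by contrapositive: I will show that if $\Uu$ is dense in $\UIL$, then $\Uu$ contains an arc with $k$-pattern $P$. The starting point is that patterns are "witnessed at finite level": by Remark~\ref{rem:pattern}, since $A \subset \UIL$ has $k$-pattern $P = L_k(x_0), \dots, L_k(x_N)$, there is some $i \in \N_0$ such that $\pi_{k+i}$ maps $A$ injectively onto an interval $J := \pi_{k+i}(A) \subset [c_2,c_1]$, and $T_s^i|_J$ has turning points at $\pi_{k+i}(x_0) < \dots < \pi_{k+i}(x_N)$ with $T_s^i(\pi_{k+i}(x_j)) = c_{L_k(x_j)}$. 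So the occurrence of the pattern $P$ is encoded entirely by the existence, somewhere in $I$, of an arc on which $T_s^i$ realizes this finite combinatorial picture of turning points and critical-orbit images. Writing $x_0 = \pi_{k+i}^{-1}\!\restriction_A$ applied to the left endpoint etc., I note in particular that $J$ is a nondegenerate subinterval of $[c_2,c_1]$ (indeed of positive length bounded below, though I will not need that).

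Next I use density of $\Uu$ together with the structure of the projections. Because $\Uu$ is dense in $\UIL$ and the projections $\pi_m$ are continuous and open onto subintervals of $[c_2,c_1]$ (they are finite-to-one, piecewise-linear factor maps via $T_s$), the set $\pi_m(\Uu)$ is dense in $[c_2,c_1]$ for every $m$; in fact since $\Uu$ is an arc-component that is a continuous injective image of $\R$ and dense, $\pi_{k+i}(\Uu)$ is a connected dense subset of $[c_2,c_1]$, hence all of the open core interval, and every point of $\Uu$ lies on arcs of $\Uu$ of arbitrarily large arc-length. Now pick any point $w \in \Uu$ whose $(k+i)$-coordinate $w_{-(k+i)}$ lies in the interior of $J$. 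Since $\Uu \cong \R$ through a parametrization under which $\pi_{k+i}$ becomes a continuous (locally monotone, "zigzag") real function with full range, I can choose an arc $B \subset \Uu$ containing $w$ on which $\pi_{k+i}$ is injective and $\pi_{k+i}(B) = J$. Indeed, starting at $w$ and extending along $\Uu$ in one direction until $\pi_{k+i}$ first turns, then the other direction, one reaches the two endpoints $c_2', c_1'$ mapping onto the two ends of $J$; because $J = \pi_{k+i}(A)$ is exactly an interval whose endpoints are of the form $c_2$-or-a-turning-value and $c_1$-or-a-turning-value matching the end-letters of $P$, and the bonding map $T_s$ is the same on both sides, the maximal arc of $\Uu$ through $w$ on which $\pi_{k+i}$ is injective projects onto an interval containing $J$. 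Shrinking it to have image exactly $J$ gives $B$ with $\pi_{k+i}|_B$ a bijection onto $J$.

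Then $B \subset \Uu$ has the same $k$-pattern as $A$: applying Remark~\ref{rem:pattern} in the other direction, the $k$-pattern of $B$ equals the $k$-pattern of the graph of $T_s^i|_{\pi_{k+i}(B)} = T_s^i|_J$, which is $P$. Hence $\Uu$ does contain an arc with $k$-pattern $P$, contradicting the hypothesis; this proves the lemma. The main obstacle — and the step I would spend the most care on — is the middle paragraph: justifying cleanly that density of the \emph{whole} arc-component $\Uu$ forces $\pi_{k+i}(\Uu)$ to cover the interior of $[c_2,c_1]$ \emph{as the image of a single sub-arc on which $\pi_{k+i}$ is monotone onto all of $J$}. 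One must rule out that $\Uu$ only approaches interior points of $J$ "from nearby arcs" whose $\pi_{k+i}$-images are tiny; this is where one invokes that an arc-component accumulating on a point $e$ must contain arcs through nearby points that project onto a definite interval — using long-branchedness of $T_s$ (guaranteed here since $c$ is non-recurrent) to get a uniform lower bound on the length of $\pi_{k+i}$-images of maximal monotone sub-arcs, so that a sufficiently long piece of $\Uu$ near $e$ necessarily has $\pi_{k+i}$-image engulfing $J$. Once that uniform-expansion/long-branch estimate is in hand, the extraction of $B$ and the pattern-matching are routine.
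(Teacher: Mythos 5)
Your opening and closing steps coincide with the paper's: the pattern $P$ is witnessed by the graph of $T_s^n|_J$ for some $n$ and some interval $J\subset[c_2,c_1]$, density of $\Uu$ gives $\pi_{k+n}(\Uu)=[c_2,c_1]$, and once one has a subarc $B\subset\Uu$ on which $\pi_{k+n}$ is injective with image $J$, Remark~\ref{rem:pattern} finishes the argument. The gap is in your middle step, and it is genuine. The assertion that the maximal subarc of $\Uu$ through $w$ on which $\pi_{k+n}$ is injective projects onto an interval containing $J$ is false in general: the endpoints of that maximal monotone piece are $(k+n)$-points, whose $\pi_{k+n}$-images are points $c_m$ of the critical orbit, and these can perfectly well lie in the interior of $J$, so the branch through a randomly chosen $w$ may cover only a small portion of $J$ (consider $P=12$, where $J=[c_2,c_1]$ and nothing short of a full branch suffices). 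The repair you propose --- a uniform lower bound $\delta$ on the lengths of $\pi_{k+n}$-images of maximal monotone subarcs via long-branchedness --- does not close this: it only guarantees each branch image has length at least $\delta$, while $|J|$ may greatly exceed $\delta$, and knowing that the \emph{union} of many short branch images engulfs $J$ is of no use, since the pattern must be read off a \emph{single} injective branch. (It also imports the non-recurrence hypothesis into a lemma that the paper states and proves without it.)

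The missing ingredient, which is exactly what the paper invokes, is Lemma~\ref{lem:Mis}: for adjacent turning points $x<y$ of $T_s^r$, the branch immediately to the right of $y$ already covers $[T_s^r(x),T_s^r(y)]$, so consecutive monotone branches have nested increasing images. Combined with $\pi_{k+n}(\Uu)=[c_2,c_1]$, this yields a single arc $A'\subset\Uu$ such that $\pi_{k+n}$ maps $A'$ injectively onto all of $[c_2,c_1]$, inside which the required subarc $B$ with $\pi_{k+n}(B)=J$ sits. Without an input of this kind, your construction of $B$ does not go through.
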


\begin{proof}
For every pattern $P$ there exist $n\in\mathbb{N}$ and $J\subset [c_2, c_1]$ such that the graph of $T_s^n|_J$ has pattern $P$.
This means that if there is a subarc $A \subset \Uu$ such that
$\pi_{k+n}$ maps $A$ injectively onto $J$, then 
$A$ has $k$-pattern $P$. 
If $\Uu$ is dense in $\UIL$, then $\pi_{k+n}(\Uu) = [c_2,c_1]$ for every $n\in\N_{0}$. 
By Lemma~\ref{lem:Mis} there exists an arc $A'\subset \Uu$ such that $\pi_{k+n}(A')$ maps injectively onto $[c_2,c_1]$. Thus there indeed exists $A\subset A' \subset \Uu$
such that $\pi_{k+n}$ maps $A$ injectively onto $J$.
This finishes the proof.
\end{proof}

\begin{definition}\label{def:link-symmetric}
An arc $A$ is called {\em k-link-symmetric} (with respect to  the chain \ $\chain_k$) if
the list of the indices of the links it passes through is a palindrome.
This list automatically has odd length, and 
there is a unique link in the middle, the {\em midlink} $\ell$,
which contains a unique arc-component $A^{m}$ of $A \cap \ell$ corresponding
to the middle letter of the palindrome. 
We call the point in $A^{m}$ with the highest $k$-level the {\em midpoint} $m$
of $A$.
\end{definition}

\begin{remark}
The definition of {\em midpoint} $m$ above is just for completeness; since we can not topologically
distinguish points in the same arc component of
$A^{m}$, any point $x \in A^{m}$ would serve equally well.
\end{remark}

\begin{remark}
Every $k$-symmetric arc is also $k$-link-symmetric but the converse does not hold. This is one of the main obstacles in the proof of the Core Ingram Conjecture.
\end{remark}

\section{The structure of the arc-component $\Rr$}\label{sec:R}

\subsection{The arcs $A_i$}

Recall that $\Rr$ is the arc-component in $\UIL$ containing 
$\rho=(\ldots, r, r, r)$ and $\tilde{\Rr}$ is the arc-component in $\UILt$ containing $\tilde{\rho}=(\ldots, \tilde{r}, \tilde{r}, \tilde{r})$.

\begin{definition}
Let $\chain_k$ be a natural chain of $\varprojlim([c_{2},c_{1}], T_{s})$. 
We define $A_i\subset \Rr$ to be the arc-component
of $\pi_{k+i}^{-1}([c_2,\hat{c}_2])$  containing $\rho$ and let $m_i:=\pi_{k+i}^{-1}(c)\in A_i$ for every $i\in\N$.
\end{definition}

\begin{lemma}
The arc $A_i\subset \Rr$ is $k$-symmetric with midpoint $m_i$ for every $i\in\N$.
\end{lemma}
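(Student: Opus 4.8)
The plan is to analyze the arc $A_i = \pi_{k+i}^{-1}([c_2,\hat c_2])$ in $\Rr$ near $\rho$ by pushing everything forward to level $k+i$, where $\pi_{k+i}$ restricted to the relevant arc becomes injective and the inverse limit geometry is replaced by the graph of a single iterate $T_s^i$ on a subinterval of $[c_2,c_1]$. First I would fix a natural chain $\chain_k$ and recall that $\pi_{k+i}(\rho) = r$, the fixed point, which lies in the interior of $[c_2,\hat c_2]$ (since $r < \hat c_2$ for $s \in (\sqrt 2, 2]$, because $\hat c_2 = 1-c_2$ and $r$ is strictly between $c_2$ and $c_1$ with $r < c_1$, and one checks $r<\hat c_2$ directly). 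So $A_i$ is a genuine nondegenerate arc with $m_i = \pi_{k+i}^{-1}(c)$ in its interior, the unique preimage of $c$ in $A_i$ at that coordinate, hence a $k$-point with $L_k(m_i)=i$ (assuming this is the minimal such $n$; here non-recurrence / long-branchedness is what guarantees we don't accidentally hit $c$ at an earlier coordinate along $A_i$, but more simply $i$ is what is prescribed by the definition).

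The core of the argument is a symmetry statement: the endpoints $e,e'$ of $A_i$ satisfy $\pi_{k+i}(e)=\pi_{k+i}(e')$ (both equal $c_2$ or both equal $\hat c_2$ — in fact by the definition $A_i$ is the arc-component of the full preimage of $[c_2,\hat c_2]$, so its endpoints map to the boundary of that interval), and the $k$-pattern of $A_i$ is a palindrome centered at $m_i$. I would prove this by induction on $i$. For $i=1$: $\pi_{k+1}$ maps $A_1$ onto $[c_2,\hat c_2]$, and since $T_s$ is a tent map symmetric about $c$, the only $k$-point in the interior is $m_1$ and the pattern is just the one-letter word, trivially a palindrome; $A_1$ is $k$-symmetric. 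For the inductive step, consider $A_{i+1} \subset A_i$: it is the arc-component of $\pi_{k+i+1}^{-1}([c_2,\hat c_2])$ through $\rho$. Applying $\sigma$ (equivalently, looking one coordinate deeper), $\sigma(A_{i+1})$ maps under $\pi_{k+i}$ the same way $A_i$ maps under $\pi_{k+i+1}$ composed with $T_s$; the key point is that $T_s$ folds $[c_2,\hat c_2]$ symmetrically about $c$ onto $[c_{?},c_1]$-type interval, so the preimage structure over $A_i$ is symmetric about the central preimage of $c$. Concretely: using Remark~\ref{rem:pattern}, the $k$-pattern of $A_{i+1}$ equals the $k$-pattern of the graph of $T_s^{i+1}$ restricted to $\pi_{k+i+1}(A_{i+1})$, and this graph is obtained from the (palindromic, by induction) pattern of $T_s^i$ on $\pi_{k+i}(A_i)=[c_2,\hat c_2]$ by precomposing with $T_s$, which is itself an even fold about $c$ on that domain; precomposing a function whose graph-pattern is a palindrome with a symmetric fold yields again a palindrome, with the new center sitting over $c$, i.e. over $m_{i+1}$.

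The step I expect to be the main obstacle is making the "precompose with a symmetric fold preserves palindromicity and centers it correctly" claim fully rigorous, in particular verifying that $\pi_{k+i+1}(A_{i+1}) = [c_2,\hat c_2]$ again (so that the domain of the fold is exactly the symmetric interval $[c_2,\hat c_2]$ and the induction closes) rather than some asymmetric subinterval. This requires checking that the arc-component of $T_s^{-1}([c_2,\hat c_2])$ about $c$ is precisely $[c_2, \hat c_2]$: indeed $T_s([c_2,\hat c_2]) = [T_s(\hat c_2), c_1] = [c_2, c_1] \supset [c_2,\hat c_2]$ is not quite it, so one instead argues that $T_s$ maps $[c_2,\hat c_2]$ onto an interval containing $[c_2,\hat c_2]$ and symmetrically about $c$, hence the preimage arc-component through $c$ of $[c_2,\hat c_2]$ under $T_s$ is a symmetric subinterval $[\hat a, a]$ of $[c_2,\hat c_2]$ with $T_s(a) = \hat c_2$; unwinding, $\pi_{k+i}$ maps $A_{i+1}$ onto $[\hat a, a]$, and one must track that the endpoints still carry the boundary values correctly so the palindrome is genuine. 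Once the interval bookkeeping is pinned down, the palindromicity is immediate from the evenness of $T_s$ about $c$, and identifying the midpoint as $m_{i+1}$ follows because the central letter of the pattern corresponds to the highest $k$-level point in the midlink, which is exactly $\pi_{k+i+1}^{-1}(c) = m_{i+1}$. Finally I would invoke the Remark after Definition~\ref{def:sym} to note the pattern has odd length with the largest letter in the middle, consistent with $m_{i+1}$ being the unique deepest $k$-point, completing the proof that $A_i$ is $k$-symmetric with midpoint $m_i$ for all $i \in \N$.
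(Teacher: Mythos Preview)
Your proposal buries the correct one-line argument inside an inductive shell that contains several false statements and does not actually close.

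First, two concrete errors. You write $\pi_{k+i}(e)=\pi_{k+i}(e')$ for the endpoints of $A_i$; in fact $\pi_{k+i}$ sends them to $c_2$ and $\hat c_2$, which are distinct. What the definition of $k$-symmetric requires (and what is true) is $\pi_k(e)=\pi_k(e')$, since $T_s(c_2)=T_s(\hat c_2)=c_3$ and hence $T_s^i(c_2)=T_s^i(\hat c_2)$. Second, the containment $A_{i+1}\subset A_i$ is false: $\pi_{k+i}(A_{i+1})=T_s([c_2,\hat c_2])=[c_3,c_1]\not\subset[c_2,\hat c_2]$ (indeed the paper later proves $A_i\subset A_{i+2}$ and $m_{i+1}\notin A_i$). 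More structurally, your induction does not close: to pass from $T_s^i$ to $T_s^{i+1}=T_s^i\circ T_s$ on $[c_2,\hat c_2]$ you would need information about $T_s^i$ on $T_s([c_2,\hat c_2])=[c_3,c_1]$, not on $[c_2,\hat c_2]$, so the inductive hypothesis as stated is not what gets used.

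The paper's proof is direct and avoids all of this. By definition $A_i$ is the arc-component of $\pi_{k+i}^{-1}([c_2,\hat c_2])$ through $\rho$, so $\pi_{k+i}$ maps $A_i$ injectively onto $[c_2,\hat c_2]$. That interval is symmetric about $c$, and since $T_s(c+t)=T_s(c-t)$ the iterate $T_s^i$ is symmetric about $c$ on $[c_2,\hat c_2]$; by Remark~\ref{rem:pattern} the $k$-pattern of $A_i$ is therefore a palindrome with middle letter at $\pi_{k+i}^{-1}(c)=m_i$. Note that no induction and no palindromicity hypothesis on $T_s^i$ is needed: precomposing \emph{any} map with the symmetric fold $T_s|_{[c_2,\hat c_2]}$ yields a function symmetric about $c$. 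This is exactly the observation you make in your final paragraph (``the palindromicity is immediate from the evenness of $T_s$ about $c$''); that sentence is the entire proof.
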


\begin{proof}
For every $i\in \N$ we obtain that $\pi_{k+i}(A_i)=[c_2,\hat{c}_2]$ injectively which is 
symmetric around $c$ and so $A_i$ is $k$-symmetric.
\end{proof}

Note that only one endpoint of $A_i$ is a $k$-point;
the other endpoint is not, although the $\pi_{k+j}$-th images of endpoints are the same for every $j < i$.

\begin{remark}
Although the arcs $A_i$ are $k$-symmetric and thus $k$-link symmetric for each $i$, 
they need not  be maximal $k$-link symmetric arcs. 
This is easiest to see in the case when $c$ is periodic. If $c$ is non-recurrent, however, the arcs $\{A_i\}_{i\in\N}$ are maximal $k$-symmetric and maximal 
$k$-link symmetric, as can be derived from Corollary~\ref{cor:2}.
\end{remark}

Define 
$$
\kappa:=\min\{i\geq 3 : c_{i}\leq c \}.
$$ 
Note that $\kappa-3$ has to be an even number or $0$, otherwise the tent map $T_{s}$ is renormalizable (which we excluded by taking the slope $s > \sqrt{2}$). 

\begin{lemma} \label{l2}
Let $\rho\in\{A_i\}_{i\in\N}\subset \Rr$ be the sequence of arcs defined as above.
Then $A_{i}\subset A_{i+2}$ for every $i\in \N$.
\end{lemma}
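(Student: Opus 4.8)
\textbf{Proof plan for Lemma~\ref{l2} ($A_i\subset A_{i+2}$).}

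The plan is to exploit the recursive structure of the arcs $A_i$ under the shift together with the definition of $\kappa$. Recall that $A_i$ is the arc-component of $\pi_{k+i}^{-1}([c_2,\hat c_2])$ containing $\rho$, and that $\sigma$ acts on $\UIL$ by $\pi_{k+i}\circ\sigma^{-1}=T_s\circ\pi_{k+i}=\pi_{k+i-1}$. So $\sigma^{-1}(A_{i+1})$ is the arc-component containing $\rho$ (note $\sigma(\rho)=\rho$) of $\pi_{k+i}^{-1}(T_s^{-1}([c_2,\hat c_2]))$. Since $T_s^{-1}([c_2,\hat c_2]) = [T_s^{-1}(c_2)\cap\text{left}, \ldots]$ — more usefully, $T_s^{-1}([c_2,\hat c_2])\supset[c_2,\hat c_2]$ because $T_s([c_2,\hat c_2])\subset[c_2,c_1]$ and in fact $c_2\le T_s(x)$ needs checking — the first step is to record the elementary interval inclusions relating $[c_2,\hat c_2]$, its $T_s$-preimage, and its $T_s^2$-preimage near the fixed point $r$.

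The core computation is the following: I would show $A_{i+2}\supset \sigma^{-2}(A_i)$ is the wrong direction, so instead I work directly. Let $J_i:=\pi_{k+i}(A_i)=[c_2,\hat c_2]$ (injectively). It suffices to show that $\pi_{k+i}$ restricted to $A_{i+2}$ maps some subarc containing $\rho$ onto $[c_2,\hat c_2]$ injectively and that this subarc is contained in $\pi_{k+i}^{-1}([c_2,\hat c_2])$; by maximality of $A_i$ as such an arc-component through $\rho$, that subarc is exactly $A_i$, giving $A_i\subset A_{i+2}$. Equivalently, passing through the conjugacy, it suffices to show $T_s^2([c_2,\hat c_2])\supset[c_2,\hat c_2]$ with the relevant monotonicity/covering controlled so that the component of $(T_s^2)^{-1}([c_2,\hat c_2])$ through $r$ contains a copy of $[c_2,\hat c_2]$ on which $\pi_{k+i}$ (pulled back) is injective. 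Here is where $\kappa$ enters: the condition $\kappa\ge 3$ and $\kappa-3$ even guarantees that $c_2<c<c_1$ and that $T_s^2$ maps a neighbourhood of $[c_2,\hat c_2]$ in a fashion that covers $[c_2,\hat c_2]$ without the critical point $c$ being hit too early — this is exactly the non-renormalizability input. One then checks: $T_s([c_2,\hat c_2])=[c_2,c_1]$ (since $\hat c_2 = 1-c_2$ and $c_2\le c\le c_1$, the image of this symmetric interval under the tent map is $[T_s(c_2),T_s(c)]=[c_3,c_1]$, and $c_3\le c_2$ by definition of $\kappa$ when... ) — the precise identity to nail down is $T_s([c_2,\hat c_2])=[c_3,c_1]$ and then $T_s([c_3,c_1])\supset[c_2,c_1]\supset[c_2,\hat c_2]$, using $c_3\le c$ which holds iff $\kappa=3$; for general $\kappa$ one iterates and the parity of $\kappa-3$ keeps the relevant branch orientation consistent.

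I expect the main obstacle to be the bookkeeping of \emph{which} arc-component contains $\rho$ — that is, proving the subarc of $A_{i+2}$ on which $\pi_{k+i}$ covers $[c_2,\hat c_2]$ injectively is the one through $\rho$ and not a parallel one, and that it genuinely sits inside $\pi_{k+i}^{-1}([c_2,\hat c_2])$ rather than merely mapping onto it. Since $\rho$ is the backward-fixed sequence of $r$ and $r\in[c_2,\hat c_2]$ is an interior fixed point of $T_s$ (for $s>\sqrt2$ one has $c_2<r<\hat c_2$), the two-step map $T_s^2$ fixes $r$ and is locally a homeomorphism near $r$; following $\rho$ through the chain $\chain_k$ and using that $A_i$ is defined as the arc-\emph{component} of the preimage, the inclusion $A_i\subset A_{i+2}$ should follow once the interval arithmetic of the previous paragraph is in place. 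A clean way to package this is to prove the slightly stronger statement $\sigma^{-2}(A_i)\cap A_{i+2}\ni\rho$ and both are $k$-symmetric arcs with the same midpoint $m_i$ after identifying levels, and then invoke the maximality/uniqueness of such arc-components through a fixed point; the parity condition on $\kappa-3$ is what makes "$\sigma^{-2}$" rather than "$\sigma^{-1}$" the natural step, which is precisely why the statement is $A_i\subset A_{i+2}$ and not $A_i\subset A_{i+1}$.
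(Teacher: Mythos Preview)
Your overall framework is sound: reduce the inclusion $A_i\subset A_{i+2}$ to the interval statement $\pi_{k+i}(A_{i+2})\supset[c_2,\hat c_2]=\pi_{k+i}(A_i)$ and then invoke injectivity of $\pi_{k+i}|_{A_i}$ together with $\rho\in A_i\cap A_{i+2}$. You also correctly compute $\pi_{k+i+1}(A_{i+2})=T_s([c_2,\hat c_2])=[c_3,c_1]$, and your treatment of the case $c_3\le c$ (equivalently $\kappa=3$) is right: then $c\in[c_3,c_1]$, so $\pi_{k+i}(A_{i+2})=T_s([c_3,c_1])=[c_2,c_1]\supset[c_2,\hat c_2]$.

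The genuine gap is in the other case, $c_3>c$ (i.e.\ $\kappa\ge 5$). Here your suggestion to ``iterate'' and rely on the parity of $\kappa-3$ is off target: the lemma is always a two-step statement, and the proof does \emph{not} iterate further. When $c_3>c$, the interval $[c_3,c_1]$ lies entirely in the right branch, so $T_s$ is monotone there and $\pi_{k+i}(A_{i+2})=T_s([c_3,c_1])=[c_2,c_4]$. The whole question is whether $c_4\ge\hat c_2$. This is what you never establish, and it is the heart of the case. It follows from $s>\sqrt{2}$: one checks $\hat c_2>r$, hence $c_3=T_s(\hat c_2)<r$; then using $T_s(x)-r=-s(x-r)$ on the right branch one gets $c_4-r=s(r-c_3)=s^2(\hat c_2-r)>\hat c_2-r$, so $c_4>\hat c_2$ and $[c_2,c_4]\supset[c_2,\hat c_2]$ as needed. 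The parity of $\kappa-3$ (non-renormalizability) is relevant only insofar as it guarantees the chain $c_3\le r\le\hat c_2\le c_4$ is consistent; it is not a mechanism for further iteration.

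A smaller point: the issue you flag as the ``main obstacle'' --- identifying the correct arc-component through $\rho$ --- is not where the difficulty lies. Once you know $\pi_{k+i}(A_{i+2})\supset[c_2,\hat c_2]$ and that $\pi_{k+i}|_{A_i}$ is injective, the inclusion $A_i\subset A_{i+2}$ follows because both arcs lie in the one-dimensional arc-component $\Rr$ and share the point $\rho$; the endpoints of $A_i$ (projecting to $c_2$ and $\hat c_2$) are interior to $A_{i+2}$. Your detour through $\sigma^{-2}(A_i)$ and ``same midpoint $m_i$'' is unnecessary and, as written, not correct.
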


\begin{proof}
Note that $\pi_{k+i}(A_{i})=\pi_{k+i+2}(A_{i+2})=[c_2,\hat{c}_2]$. 
We distinguish two cases:

\textbf{Case I:} Let $c_{3}<c$.
Then $c\in \pi_{k+i+1}(A_{i+2})$ and it follows that $\pi_{k+i}(A_i)\subset \pi_{k+i}(A_{i+2})$, see Figure~\ref{fig:lem4case1}. If we combine this with the fact that $\pi_{k+i}|_{A_{i}}$ is injective it follows that $A_{i}\subset A_{i+2}$.

\begin{figure}[ht]
\unitlength=9mm
\begin{picture}(10,4)(1,-0.5)
\put(13,2.5){\line(-1,0){5}} 
\put(1,2){\line(1,0){12}} \put(13, 2.25){\oval(0.5,0.5)[r]}
\put(1, 1.75){\oval(0.5,0.5)[l]}
\put(1,1.5){\line(1,0){12}} \put(13, 1.25){\oval(0.5,0.5)[r]}
\put(13,1){\line(-1,0){5}}

\put(1,0){\line(1,0){12}}
\put(1,-0,25){\line(0,1){0,5}}
\put(13,-0,25){\line(0,1){0,5}}
\put(5.5,-0,2){\line(0,1){0,4}}
\put(8,2.3){\line(0,1){0,4}}
\put(8,0.8){\line(0,1){0,4}}
\put(8,-0,2){\line(0,1){0,4}}
\put(9,-0,2){\line(0,1){0,4}}
\put(10,-0,2){\line(0,1){0,4}}

\put(-0.5,2){\vector(0,-1){2}}\put(-0.3,1){\large $\pi_{k+i}$}
\put(1,-0,5){\small $c_2$}
\put(5.5,-0,5){\small $c$}
\put(8,-0,5){\small $c_4$}
\put(9,-0,5){\small $r$}
\put(10,-0,5){\small $\hat{c}_2$}
\put(13,-0,5){\small $c_1$}
\put(3, 2,5){\small $A_{i+2}$}

\end{picture}
\caption{The arc $A_{i+2}$ as in Case I.}
\label{fig:lem4case1}
\end{figure}
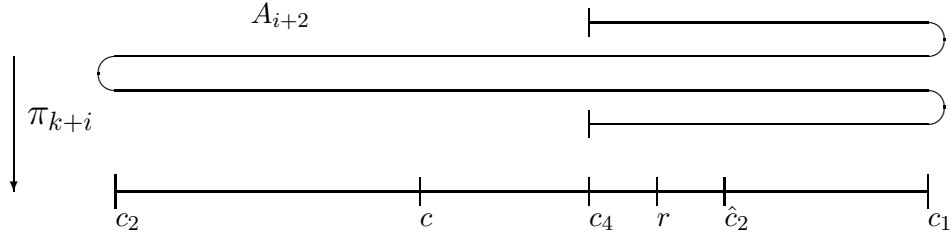

\textbf{Case II:} Let $c_{3}\geq c$.
For $s> \sqrt{2}$ we have $c_3 = T_s(\hat c_2) \leq r \leq \hat{c}_2 \leq c_4$, because $\kappa-3\in \N$ is an even number, see Figure~\ref{fig:lem4case2}.
We obtain that $\pi_{k+i}(A_{i+2})$ maps in $2$-to-$1$ fashion on the interval $[c_2,c_4]$ and because $c_4\geq\hat{c}_2$ we obtain that $\pi_{k+i}(A_{i})\subset \pi_{k+i}(A_{i+2})$ and
thus again $A_{i}\subset A_{i+2}$.
\end{proof}

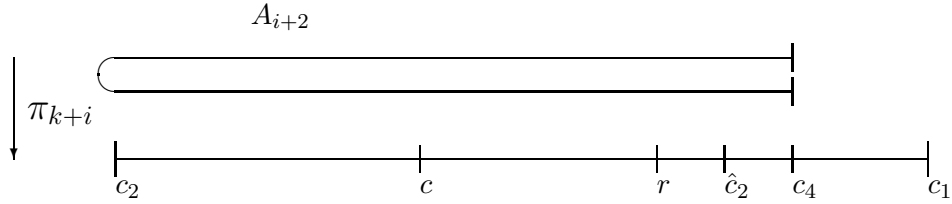
\begin{figure}[ht]
\unitlength=9mm
\begin{picture}(10,3)(1,-0.5)

\put(1,1.5){\line(1,0){10}} 
\put(1, 1.25){\oval(0.5,0.5)[l]}
\put(1,1){\line(1,0){10}} 

\put(1,0){\line(1,0){12}}
\put(1,-0,25){\line(0,1){0,5}}
\put(13,-0,25){\line(0,1){0,5}}
\put(5.5,-0,2){\line(0,1){0,4}}
\put(9,-0,2){\line(0,1){0,4}}
\put(10,-0,2){\line(0,1){0,4}}
\put(11,-0,2){\line(0,1){0,4}}
\put(11,1.3){\line(0,1){0,4}}
\put(11,0.8){\line(0,1){0,4}}

\put(-0.5,1.5){\vector(0,-1){1.5}}\put(-0.3,0.6){\large $\pi_{k+i}$}
\put(1,-0,5){\small $c_2$}
\put(5.5,-0,5){\small $c$}
\put(9,-0,5){\small $r$}
\put(10,-0,5){\small $\hat{c}_2$}
\put(11,-0,5){\small $c_4$}
\put(13,-0,5){\small $c_1$}
\put(3, 2){\small $A_{i+2}$}

\end{picture}
\caption{The arc $A_{i+2}$ as in Case II.}
\label{fig:lem4case2}
\end{figure}

In the following lemma let $A_{i,j}\subset A_i\subset \Rr$ denote the longest arc (in arc-length) such that $\rho\in A_{i,j}$ and $\pi_{k+j}: A_{i,j}\rightarrow [c_2,c_1]$ is injective for some $j\leq i$, see Figure~\ref{fig:lem5case}. Note that $A_{i,i}=A_i$.
 
\begin{lemma}\label{lem:kappa}
Let $\kappa$ and arcs $\{A_i\}_{i\in\N}$ be as defined above. Then $A_{i}\subset A_{i+\kappa}$ and $A_{i}\nsubseteq A_{i+l}$ for every $i\in \N$ and every odd $l< \kappa$. 
\end{lemma}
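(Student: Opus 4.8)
The plan is to analyse how the interval $\pi_{k+i}(A_{i+l})$ evolves as we increase the index $l$ by one at a time, starting from $\pi_{k+i+l}(A_{i+l}) = [c_2,\hat c_2]$ and pulling back through $T_s$ a total of $l$ times. Concretely, set $J_0 := [c_2,\hat c_2]$ and, for $1 \le j \le l$, let $J_j$ be the arc-component of $T_s^{-1}(J_{j-1})$ that contains $r$ (this is well-defined because $r$ is a fixed point, so $r \in J_j$ for all $j$, and it is exactly $\pi_{k+i}(A_{i+l})$ after $j=l$ steps). The inclusion $A_i \subset A_{i+l}$ holds if and only if $J_l \supseteq [c_2,\hat c_2] = \pi_{k+i}(A_i)$, using injectivity of $\pi_{k+i}$ on $A_i$ exactly as in the proof of Lemma~\ref{l2}. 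So the lemma reduces to a statement purely about the pullbacks $J_j$ of $[c_2,\hat c_2]$ along the critical orbit.

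Next I would track the endpoints of $J_j$. Since $r$ is an interior fixed point of $T_s$ with $c_2 < r < \hat c_2$, one endpoint of $J_j$ sits to the left of $r$ and one to the right. Pulling back increases the interval on the side that has not yet "reached" $c$ and its images; the key point is that $J_j$ fails to contain $[c_2,\hat c_2]$ precisely as long as one of its endpoints is strictly between $c_2$ and $\hat c_2$, and this persists until the pullback process forces an endpoint to hit $c$ (so the next pullback wraps around $c$ and the interval is no longer expanding monotonically on that side). By definition of $\kappa = \min\{i \ge 3 : c_i \le c\}$ and the observation (already recorded before Lemma~\ref{l2}) that $\kappa - 3$ is even or $0$, the first time in the backward orbit of $[c_2,\hat c_2]$ that a turning point of the relevant iterate lands at or below $c$ — equivalently the first time $c_{\text{something}} \le c$ obstructs the expansion — occurs exactly at step $\kappa$. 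For $l < \kappa$ the interval $J_l$ still has an endpoint strictly inside $(c_2,\hat c_2)$, hence $J_l \not\supseteq [c_2,\hat c_2]$ and $A_i \not\subseteq A_{i+l}$; the parity restriction that $l$ be odd is what rules out the "renormalizable" near-misses where $J_l$ could coincide on one side. For $l = \kappa$, the wrap-around at $c$ makes $J_\kappa$ fold over and cover $[c_2,\hat c_2]$ entirely, exactly the two-case computation already carried out in Lemma~\ref{l2} (Case I is $c_3 < c$, so $\kappa = 3$; Case II with $c_3 \ge c$ is the start of the longer chain leading to $\kappa$), giving $A_i \subset A_{i+\kappa}$.

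To make the bookkeeping clean I would phrase it through the auxiliary arcs $A_{i,j}$ introduced right before the statement: $A_{i,j}$ is the maximal subarc of $A_i$ through $\rho$ on which $\pi_{k+j}$ is injective, so the $k$-pattern of $A_{i+l}$ read off via $\pi_{k+i+l}$ is governed by the graph of $T_s^l$ restricted to $[c_2,\hat c_2]$, and $A_i \subseteq A_{i+l}$ iff the full core interval $[c_2,c_1]$ appears as an image of a subarc — i.e.\ iff $T_s^l([c_2,\hat c_2]) \supseteq [c_2,c_1]$ with the appropriate folding pattern near $c$. Unwinding $\kappa$ then translates directly into how many pullbacks are needed.

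The main obstacle I anticipate is the parity/renormalizability argument: one has to verify carefully that for \emph{odd} $l < \kappa$ the pulled-back interval $J_l$ genuinely has an endpoint in the open interval $(c_2,\hat c_2)$, rather than landing exactly on $c_2$ or $\hat c_2$ (which would give an unwanted extra inclusion). This is where the hypothesis $s > \sqrt 2$ (non-renormalizability) and the evenness of $\kappa - 3$ must be used decisively — essentially to show that the critical orbit does not "accidentally" close up a symmetric interval before step $\kappa$ — and it is the only place where a short case analysis on the position of $c_3, c_4, \dots$ relative to $c$, $r$, $\hat c_2$ (mirroring Figures~\ref{fig:lem4case1} and~\ref{fig:lem4case2}) seems unavoidable.
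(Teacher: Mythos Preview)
Your overall plan---track a family of intervals indexed by $j$ starting from $[c_2,\hat c_2]$ and use the definition of $\kappa$ to detect the first index at which the inclusion holds---matches the paper's approach. But the formalisation contains a directional error and a missing injectivity restriction that together make the argument break down.

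First, the direction: since $\pi_{k+i}=T_s^{\,l}\circ\pi_{k+i+l}$, passing from $\pi_{k+i+l}(A_{i+l})=[c_2,\hat c_2]$ to lower projections means applying $T_s$ \emph{forward}, not taking preimages. Your $J_j$, defined as the component of $T_s^{-1}(J_{j-1})$ containing $r$, is therefore not $\pi_{k+i}(A_{i+l})$; already at $j=2$ that preimage component straddles $c$ and has nothing to do with any projection of $A_{i+l}$.

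Second, and more importantly, the bare forward image $\pi_{k+i}(A_{i+l})=T_s^{\,l}([c_2,\hat c_2])$ is still not the object governing the inclusion. What one needs is $\pi_{k+i}(A_{i+l,i})$, the image of the maximal subarc of $A_{i+l}$ through $\rho$ on which $\pi_{k+i}$ is \emph{injective}: $A_i\subset A_{i+l}$ holds exactly when this interval contains $[c_2,\hat c_2]$. The paper computes precisely this via the $A_{i,j}$ arcs, and the recursion is: from $I_j:=\pi_{k+i+l-j}(A_{i+l,i+l-j})$ one obtains $I_{j+1}$ by applying $T_s$ to the maximal subinterval of $I_j$ containing $r$ on which $T_s$ is monotone. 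For $\kappa\ge 5$ this yields explicitly $I_{l'}=[c_{l'+2},c_1]$ for each odd $l'\le\kappa-2$, and $I_{\kappa}=[c_2,c_1]$. The non-inclusions for odd $l'<\kappa$ then follow because $c_{l'+2}>c_2$; for $l'\le\kappa-4$ this is automatic since $c_{l'+2}>c$, and for $l'=\kappa-2$ it uses only that $c$ is not periodic. So the ``parity/renormalizability obstacle'' you anticipate is not where any real work lies---once the correct intervals are written down, no further case analysis beyond $\kappa=3$ versus $\kappa\ge 5$ is required.
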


\begin{proof} We distinguish cases:

\begin{figure}[ht]
\unitlength=9mm
\begin{picture}(10,9)(1.5,-1)

\put(1,7.3){\line(1,0){3.5}}
\put(13,6.9){\line(-1,0){12}}\put(1,7.1){\oval(0.4,0.4)[l]}
\put(2.5,6.5){\line(1,0){10.5}}\put(13,6.7){\oval(0.4,0.4)[r]}
\put(2.5,6.3){\oval(0.4,0.4)[l]}\put(13,6.1){\line(-1,0){10.5}}
\put(1,5,7){\line(1,0){12}}\put(13,5.9){\oval(0.4,0.4)[r]}
\put(1,5,5){\oval(0.4,0.4)[l]}\put(1,5,3){\line(1,0){3,5}}
\put(4.5,5.3){\vector(1,0){0}}
\put(4.5,7.3){\vector(1,0){0}}

\put(13,4.5){\line(-1,0){2}} 
\put(1,4.1){\line(1,0){12}} \put(13, 4.3){\oval(0.4,0.4)[r]}
\put(1, 3.9){\oval(0.4,0.4)[l]}
\put(1,3.7){\line(1,0){12}} \put(13, 3.5){\oval(0.4,0.4)[r]}
\put(13,3.3){\line(-1,0){2}}
\put(11,4.5){\vector(-1,0){0}}
\put(11,3.3){\vector(-1,0){0}}

\put(13,1.7){\line(-1,0){10.5}}
\put(1,1.3){\line(1,0){12}}\put(13,1.5){\oval(0.4,0.4)[r]}
\put(2.5,1.3){\vector(-1,0){0}}
\put(1,1.3){\vector(-1,0){0}}
\put(2.5,1.7){\vector(-1,0){0}}
\put(10,1.3){\vector(1,0){0}}

\put(1,0.3){\line(1,0){9}}
\put(1,0.3){\vector(-1,0){0}}
\put(10,0.3){\vector(1,0){0}}

\put(1,0){\line(1,0){12}}
\put(1,-0,2){\line(0,1){0,4}}
\put(13,-0,2){\line(0,1){0,4}}
\put(5.5,-0,1){\line(0,1){0,2}}
\put(11,-0,1){\line(0,1){0,2}}
\put(9,-0,1){\line(0,1){0,2}}
\put(10,-0,1){\line(0,1){0,2}}
\put(2.5,-0,1){\line(0,1){0,2}}
\put(4.5,-0,1){\line(0,1){0,2}}

\put(1,1){\line(1,0){12}}
\put(1,0,8){\line(0,1){0,4}}
\put(13,0,8){\line(0,1){0,4}}
\put(5.5,0,9){\line(0,1){0,2}}
\put(11,0,9){\line(0,1){0,2}}
\put(9,0,9){\line(0,1){0,2}}
\put(10,0,9){\line(0,1){0,2}}
\put(2.5,0,9){\line(0,1){0,2}}
\put(4.5,0,9){\line(0,1){0,2}}

\put(1,3){\line(1,0){12}}
\put(1,2,8){\line(0,1){0,4}}
\put(13,2,8){\line(0,1){0,4}}
\put(5.5,2,9){\line(0,1){0,2}}
\put(11,2,9){\line(0,1){0,2}}
\put(9,2,9){\line(0,1){0,2}}
\put(10,2,9){\line(0,1){0,2}}
\put(2.5,2,9){\line(0,1){0,2}}
\put(4.5,2,9){\line(0,1){0,2}}

\put(1,5){\line(1,0){12}}
\put(1,4,8){\line(0,1){0,4}}
\put(13,4,8){\line(0,1){0,4}}
\put(5.5,4,9){\line(0,1){0,2}}
\put(11,4,9){\line(0,1){0,2}}
\put(9,4,9){\line(0,1){0,2}}
\put(10,4,9){\line(0,1){0,2}}
\put(2.5,4,9){\line(0,1){0,2}}
\put(4.5,4,9){\line(0,1){0,2}}

\put(-1,0){\small $\pi_{k+i+3}$}
\put(1,-0,3){\scriptsize $c_2$}
\put(5.5,-0,3){\scriptsize $c$}
\put(11,-0,3){\scriptsize $c_4$}
\put(9,-0,3){\scriptsize $r$}
\put(10,-0,3){\scriptsize $\hat{c}_2$}
\put(13,-0,3){\scriptsize $c_1$}
\put(2.5,-0,3){\scriptsize $c_3$}
\put(4.5,-0,3){\scriptsize $c_5$}

\put(-1,1){\small $\pi_{k+i+2}$}
\put(1,0,7){\scriptsize $c_2$}
\put(5.5,0,7){\scriptsize $c$}
\put(11,0,7){\scriptsize $c_4$}
\put(9,0,7){\scriptsize $r$}
\put(10,0,7){\scriptsize $\hat{c}_2$}
\put(13,0,7){\scriptsize $c_1$}
\put(2.5,0,7){\scriptsize $c_3$}
\put(4.5,0,7){\scriptsize $c_5$}

\put(-1,3){\small $\pi_{k+i+1}$}
\put(1,2,7){\scriptsize $c_2$}
\put(5.5,2,7){\scriptsize $c$}
\put(11,2,7){\scriptsize $c_4$}
\put(9,2,7){\scriptsize $r$}
\put(10,2,7){\scriptsize $\hat{c}_2$}
\put(13,2,7){\scriptsize $c_1$}
\put(2.5,2,7){\scriptsize $c_3$}
\put(4.5,2,7){\scriptsize $c_5$}

\put(-1,5){\small $\pi_{k+i}$}
\put(1, 4,7){\scriptsize $c_2$}
\put(5.5,4,7){\scriptsize $c$}
\put(11,4,7){\scriptsize $c_4$}
\put(9,4,7){\scriptsize $r$}
\put(10,4,7){\scriptsize $\hat{c}_2$}
\put(13,4,7){\scriptsize $c_1$}
\put(2.5,4,7){\scriptsize $c_3$}
\put(4.5,4,7){\scriptsize $c_5$}

\put(14, 0.1){\vector(0, 1){0.8}}
\put(14, 1.6){\vector(0, 1){0.8}}
\put(14, 3.6){\vector(0, 1){0.8}}
\put(13.5,0.35){\small $T_s$}
\put(13.5, 1.8){\small $T_s$}
\put(13.5, 3.8){\small $T_s$}

\put(7,0.4){\scriptsize $A_{i+3}$}
\put(11.5,1.8){\scriptsize $A_{i+3}$}
\put(5.5,1.4){\scriptsize $A_{i+2}$}
\put(11.5,3.8){\scriptsize $A_{i+3}$}
\put(1.5,6.2){\scriptsize $A_{i+3}$}
\put(7,7){\scriptsize $A_{i}$}
\put(9,0.45){\scriptsize $\rho$}
\put(9,1.45){\scriptsize $\rho$}
\put(9,3.85){\scriptsize $\rho$}
\put(9,7.05){\scriptsize $\rho$}
\put(9,0.3){\circle*{0.08}}
\put(9,1.3){\circle*{0.08}}
\put(9,3.7){\circle*{0.08}}
\put(9,6.9){\circle*{0.08}}

\put(6 ,-0.3){\scriptsize $\pi_{k+i+3}(A_{i+3,i+3})$}
\put(6.3 ,4.7){\scriptsize $\pi_{k+i}(A_{i+3,i})$}

\put(1, 5){\vector(-1, 0){0}}
\put(1, 0){\vector(-1, 0){0}}
\put(13, 5){\vector(1, 0){0}}
\put(10, 0){\vector(1, 0){0}}
\put(1, 6.9){\vector(-1, 0){0}}
\put(10, 6.9){\vector(1, 0){0}}

\thicklines
\put(1,0){\line(1,0){9}}
\put(1,5){\line(1,0){12}}
\put(1,6.9){\line(1,0){9}}
\put(1,1.3){\line(1,0){9}}

\end{picture}
\caption{Arcs $A_i, A_{i+1}, A_{i+2}, A_{i+3}$ in mentioned projections as in Case I.}
\label{fig:lem5case}
\end{figure}
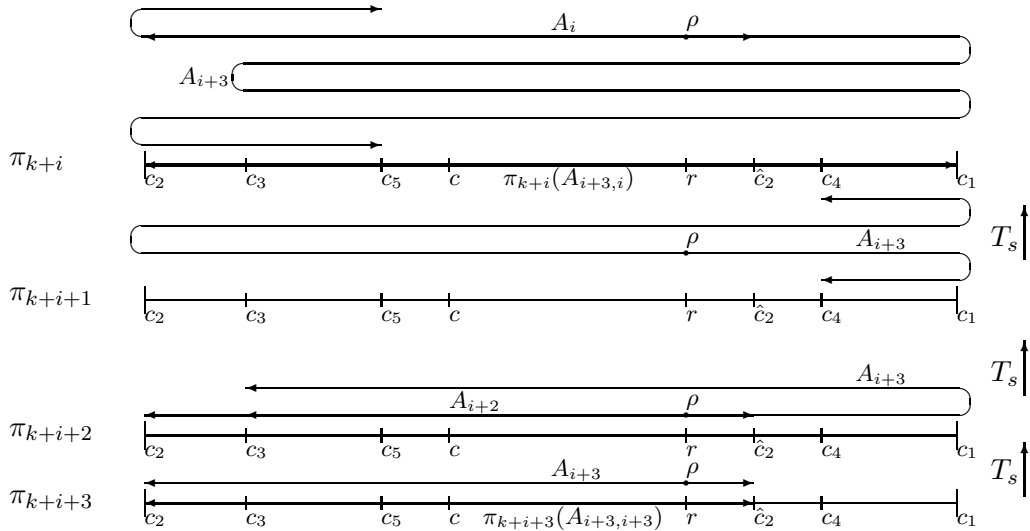

\textbf{Case I:} First assume that $\kappa=3$.
Since $T_{s}(c_2)=c_3>c_2$ it follows that $\pi_{k+i}(A_{i})\nsubseteq \pi_{k+i}(A_{i+1})$ and thus $A_{i}\nsubseteq A_{i+1}$.
Furthermore, $[c,c_1]\subset \pi_{k+i+2}(A_{i+3,i+2})$. This means that $\pi_{k+i+1}(A_{i+3,i+1})=[c_2,c_1]$ and hence
$\pi_{k+i}(A_{i+3,i})=[c_2,c_1]$, see Figure~\ref{fig:lem5case}. We conclude that $A_{i}\subset A_{i+3}$.
This finishes the proof for $\kappa=3$.

\textbf{Case II:} Let $\kappa\geq 5$.
Note that $c_{\kappa}<c < c_i$ for every $i\in \{3,\ldots,\kappa-1\}$. Thus we observe that $[c_2,\hat{c}_2]\nsubseteq \pi_{k+i+\kappa-l}(A_{i+\kappa,i+\kappa-l})$ 
for every odd $l \in \{1,\ldots \kappa-4 \}$. It follows that $A_i\nsubseteq A_{i+l}$ for every odd $l \in \{1,\ldots \kappa-4 \}$. \\
Because $T^{\kappa-2}_{s}(c_2)=c_{\kappa}$ it follows that $[c,c_1]\subset \pi_{k+i+2}(A_{i+\kappa,i+2})$ and $k+i+2$ is the smallest such index.
However, because $c$ is not periodic, $c_2< c_{\kappa}$ and it follows that $\pi_{k+i+2}(A_{i+2})=[c_2,\hat{c}_2]\nsubseteq \pi_{k+i+2}(A_{i+\kappa,i+2})$ and thus also
$\pi_{k+i}(A_{i})\nsubseteq \pi_{k+i}(A_{i+\kappa-2,i})$, so $A_{i}\nsubseteq A_{i+\kappa-2}$. 
As above we observe that $\pi_{k+i}(A_{i+\kappa,i})=[c_2,c_1]$ and thus $A_i\subset A_{i+\kappa}$. 
\end{proof}

\begin{lemma}\label{lem:rho}
Let the arcs $\{A_i\}_{i\in\N}$ with midpoints $\{m_i\}_{i\in \N}$ be as defined above.
Then $\rho \in [m_i, m_{i+1}]$ and $\rho \notin [m_i, m_{i+2}]$
for all $i \in \N$.  
\end{lemma}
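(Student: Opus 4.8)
The plan is to pin down the cyclic position of $\rho$ among $m_i,m_{i+1}$ (and among $m_i,m_{i+2}$) by exhibiting a single arc $A_N\subset\Rr$ that contains all three of the relevant points and on which the projection $\pi_{k+N}$ is a homeomorphism onto $[c_2,\hat{c}_2]$; the order of the three points along $A_N$ then agrees with the order of their $\pi_{k+N}$-images. Recall that $\pi_{k+i}$ maps $A_i$ bijectively (hence homeomorphically) onto $[c_2,\hat{c}_2]$, with $\pi_{k+i}(m_i)=c$ and $\pi_{k+i}(\rho)=r$; since $c<r<\hat{c}_2$ (cf.\ Figures~\ref{fig:lem4case1}--\ref{fig:lem4case2}), the point $\rho$ is interior to $A_i$, and $m_i,m_{i+1},m_{i+2},\rho$ are pairwise distinct because their $\pi_{k+i}$-images $c,c_1,c_2,r$ are pairwise distinct (the critical orbit is infinite). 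Now $\kappa$ is odd, since $\kappa-3$ is even; hence Lemma~\ref{l2}, applied $(\kappa-1)/2$ times, gives $A_{i+1}\subset A_{i+3}\subset\dots\subset A_{i+\kappa}$, while Lemma~\ref{lem:kappa} gives $A_i\subset A_{i+\kappa}$. Thus for the first assertion we may take $N=i+\kappa$, and for the second assertion $N=i+2$ works, since $A_{i+2}$ contains $\rho$, $m_{i+2}$, and also $m_i$ (the last by Lemma~\ref{l2}).

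For the core computation, fix $j\le N$ with $m_j\in A_j\subset A_N$. Along $A_N$ one has $\pi_{k+j}=T_s^{\,N-j}\circ\pi_{k+N}$, and since $\pi_{k+j}$ restricts to a homeomorphism of $A_j$ onto $[c_2,\hat{c}_2]$, the map $T_s^{\,N-j}$ restricts to a homeomorphism of the subinterval $J_j:=\pi_{k+N}(A_j)$ onto $[c_2,\hat{c}_2]$; in particular $T_s^{\,N-j}|_{J_j}$ is strictly monotone. Because $r$ is a fixed point of $T_s$ with $r\in(c,1)$ and all its forward iterates equal $r\ne c$, the map $T_s^{\,l}$ is orientation-reversing on a neighbourhood of $r$ exactly when $l$ is odd; as $r=\pi_{k+N}(\rho)$ lies in the interior of $J_j$, this is also the type of monotonicity of $T_s^{\,N-j}$ on all of $J_j$. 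Since $T_s^{\,N-j}(r)=r$ and $T_s^{\,N-j}\bigl(\pi_{k+N}(m_j)\bigr)=\pi_{k+j}(m_j)=c<r$, we obtain
\[
\pi_{k+N}(m_j)<r \ \text{ if } N-j \text{ is even},\qquad \pi_{k+N}(m_j)>r \ \text{ if } N-j \text{ is odd}.
\]

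It remains to read off the conclusions. For $N=i+\kappa$: as $N-i=\kappa$ is odd we get $\pi_{k+N}(m_i)>r$, and as $N-(i+1)=\kappa-1$ is even we get $\pi_{k+N}(m_{i+1})<r$; together with $\pi_{k+N}(\rho)=r$ this shows that the image of $\rho$ lies strictly between those of $m_i$ and $m_{i+1}$, so $\rho\in[m_i,m_{i+1}]$. For $N=i+2$: as $N-i=2$ is even we get $\pi_{k+i+2}(m_i)<r$, while also $\pi_{k+i+2}(m_{i+2})=c<r$ and $\pi_{k+i+2}(\rho)=r$; thus $r$ is not between $\pi_{k+i+2}(m_i)$ and $\pi_{k+i+2}(m_{i+2})$, hence $\rho\notin[m_i,m_{i+2}]$. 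The only load-bearing points are bookkeeping ones: the parity of $\kappa$ is exactly what lets $A_i$ and $A_{i+1}$ sit inside the common arc $A_{i+\kappa}$ and makes the even/odd dichotomy come out right, and the inequalities $c<r<\hat{c}_2$ are what ensure $r$ is interior to each $J_j$, so that the local orientation of $T_s^{\,N-j}$ at the fixed point $r$ controls its monotonicity on the whole interval. No substantial analytic difficulty arises beyond this.
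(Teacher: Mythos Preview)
Your proof is correct, but it takes a more elaborate route than the paper's. The paper works entirely with the single projection $\pi_{k+i}$: since $\pi_{k+i}(m_i)=c$, $\pi_{k+i}(m_{i+1})=c_1$, $\pi_{k+i}(m_{i+2})=c_2$, and $\pi_{k+i}(\rho)=r$, and since all four points lie in the maximal arc through $\rho$ on which $\pi_{k+i}$ is injective (this arc maps onto $[c_2,c_1]$; it is the arc later called $S_i$), the order relations $c_2<c<r<c_1$ immediately give both conclusions. Your approach instead embeds the relevant points in $A_{i+\kappa}$ (for the first claim) and $A_{i+2}$ (for the second), and uses the parity of $N-j$ together with the fact that $T_s$ is orientation-reversing at the fixed point $r$ to determine on which side of $r$ each $\pi_{k+N}(m_j)$ falls. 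The advantage of your route is that it stays entirely within the arcs $A_j$ already introduced and makes the role of the orientation at $r$ completely explicit; the cost is the detour through Lemma~\ref{lem:kappa} and the parity of $\kappa$, neither of which the paper needs here. The paper's argument is shorter but leaves implicit the observation that the maximal arc of injectivity of $\pi_{k+i}$ through $\rho$ really does contain $m_i$, $m_{i+1}$ and $m_{i+2}$.
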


\begin{proof}
Since by definition $\pi_{k+i}(A_{i})=[c_2,\hat{c}_2]$ and 
$\pi_{k+i}(m_i)=c$, we have
$\pi_{k+i}(m_{i+1})=T_{s}(\pi_{k+i}(m_i))=c_1$. Since $r\in [\pi_{k+i}(m_i),\pi_{k+i}(m_{i+1})]$ it follows that $\rho\in [m_{i},m_{i+1}]$.

To prove the second statement observe that $\pi_{k+i}(m_i)=c$ and $\pi_{k+i}(m_{i+2})=c_2$. Because 
$r\notin [c_2,c]$ and $r\in \pi_{k+i}(A_{i})\subset \pi_{k+i}(A_{i+2})$ 
 by Lemma~\ref{l2}, we obtain $\rho\notin [m_i,m_{i+2}]$.
\end{proof}

\begin{lemma}\label{lem:mid}
The arcs $\{A_i\}_{i\in\N}$ with midpoints $\{m_i\}_{i\in\N}$ as above satisfy 
 $m_{i+2} \in \partial A_i$ and $m_{i+1} \notin A_i$. 
Furthermore, if $\kappa=3$ then $m_i\in A_{i+1}$ and if $\kappa>3$ then $m_i\notin A_{i+1}$.
\end{lemma}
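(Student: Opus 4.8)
I would prove all four statements by pushing everything down to a single coordinate and reading off the one-dimensional picture, using throughout: $\pi_{k+j}(m_l)=T_s^{l-j}(c)=c_{l-j}$ for $l>j$ (since $\pi_{k+l}(m_l)=c$); that $\pi_{k+i}|_{A_i}$ is a homeomorphism onto $[c_2,\hat c_2]$ with $\pi_{k+i}(m_i)=c$; and the elementary inequalities $c_2<c_3$ and $c_1>\hat c_2$ (the latter since $c_1-\hat c_2=-\tfrac{1}{2}(s-1)(s-2)>0$ for $s\in(\sqrt2,2)$). The statement $m_{i+1}\notin A_i$ is then immediate: $\pi_{k+i}(m_{i+1})=T_s(c)=c_1>\hat c_2$, so $m_{i+1}\notin\pi_{k+i}^{-1}([c_2,\hat c_2])\supseteq A_i$. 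For $\kappa>3$ I would compute $\pi_{k+i}(A_{i+1})=T_s(\pi_{k+i+1}(A_{i+1}))=T_s([c_2,\hat c_2])=[c_3,c_1]$ (using that $c\in(c_2,\hat c_2)$ is the turning point of $T_s$ and $T_s(c_2)=T_s(\hat c_2)=c_3$); since $\kappa>3$ forces $c_3>c$, we get $c=\pi_{k+i}(m_i)\notin[c_3,c_1]=\pi_{k+i}(A_{i+1})$, hence $m_i\notin A_{i+1}$.

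\textbf{Proving $m_{i+2}\in\partial A_i$.} Here I would invoke the remark following the $k$-symmetry of $A_i$: exactly one endpoint $e$ of $A_i$ is a $k$-point, and it has the same $\pi_{k+j}$-image as the other, non-$k$-point, endpoint for all $j<i$. This forces $L_k(e)>i$, say $L_k(e)=i+n$ with $n\ge1$, so that $\pi_{k+i}(e)=c_n$; since $e\in\partial A_i$ we also have $\pi_{k+i}(e)\in\{c_2,\hat c_2\}$. I would then rule out the value $\hat c_2$: $c_n=\hat c_2$ gives $c_{n+1}=T_s(\hat c_2)=c_3$, which by distinctness of the $c_i$ forces $n=2$, and then $c_2=\hat c_2=c$, a contradiction. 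Hence $c_n=c_2$, which again by distinctness forces $n=2$, so $\pi_{k+i+2}(e)=c$. Finally, since $e\in A_i\subset A_{i+2}$ by Lemma~\ref{l2} and $\pi_{k+i+2}$ is injective on $A_{i+2}$ with $\pi_{k+i+2}(m_{i+2})=c$, we conclude $e=m_{i+2}$, i.e.\ $m_{i+2}\in\partial A_i$.

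\textbf{Proving $m_i\in A_{i+1}$ when $\kappa=3$.} Let $B\subset A_i$ be the subarc from $m_i$ to $\rho$. As $\pi_{k+i}|_{A_i}$ is a homeomorphism onto $[c_2,\hat c_2]$ sending $m_i\mapsto c$ and $\rho\mapsto r$, with $c<r$, we get $\pi_{k+i}(B)=[c,r]$ and $\pi_{k+i}|_B$ injective, whence $\pi_{k+i+1}|_B$ is injective too ($\pi_{k+i}=T_s\circ\pi_{k+i+1}$). Next, $\pi_{k+i+1}(B)$ is a connected subset of $T_s^{-1}([c,r])=[\tfrac{1}{2s},\tfrac{r}{s}]\cup[r,1-\tfrac{1}{2s}]$ (using $1-\tfrac{r}{s}=r$, as $T_s(r)=r$) containing $\pi_{k+i+1}(\rho)=r$, so $\pi_{k+i+1}(B)\subseteq[r,1-\tfrac{1}{2s}]$; combined with $\pi_{k+i+1}(m_i)\in T_s^{-1}(c)=\{\tfrac{1}{2s},1-\tfrac{1}{2s}\}$ this forces $\pi_{k+i+1}(m_i)=1-\tfrac{1}{2s}$ and $\pi_{k+i+1}(B)=[r,1-\tfrac{1}{2s}]$. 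Now the hypothesis $\kappa=3$ is exactly the statement $c_3=sc_2\le c$, equivalently $c_2\le\tfrac{1}{2s}$, equivalently $1-\tfrac{1}{2s}\le\hat c_2$; together with $r>c_2$ this yields $\pi_{k+i+1}(B)\subseteq[c_2,\hat c_2]$. Hence $B$ is an arc through $\rho$ contained in $\pi_{k+i+1}^{-1}([c_2,\hat c_2])$, so $B\subseteq A_{i+1}$ and in particular $m_i\in A_{i+1}$.

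\textbf{The main obstacle.} The first three statements are short once the right coordinate is chosen; the delicate one is $m_i\in A_{i+1}$ for $\kappa=3$. Two points need care there: pinning down that $\pi_{k+i+1}(m_i)$ is the larger of the two $T_s$-preimages of $c$ (forced by the connectedness of $\pi_{k+i+1}(B)$ and the fixed-point relation $T_s(r)=r$), and recognising that $\kappa=3$ is precisely the inequality $1-\tfrac{1}{2s}\le\hat c_2$ that keeps $B$ inside $\pi_{k+i+1}^{-1}([c_2,\hat c_2])$. As a consistency check, for $\kappa>3$ the same computation gives $\pi_{k+i+1}(m_i)=1-\tfrac{1}{2s}>\hat c_2$, matching the independently established $m_i\notin A_{i+1}$.
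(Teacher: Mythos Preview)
Your proof is correct. For $m_{i+1}\notin A_i$ and for the case $\kappa>3$ you argue exactly as the paper does: project to the $(k+i)$-th coordinate and observe that $c_1$, respectively $c$, lies outside $\pi_{k+i}(A_{i+1})=[c_3,c_1]$.

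For the other two claims your route differs from the paper's. For $m_{i+2}\in\partial A_i$ the paper simply computes $\pi_{k+i}(m_{i+2})=c_2$ and invokes $A_i\subset A_{i+2}$, whereas you work in the opposite direction: you start from the (unique) $k$-point endpoint $e$ of $A_i$, use the remark that both endpoints agree under $\pi_{k+j}$ for $j<i$ to force $L_k(e)>i$, and then pin down $L_k(e)=i+2$ by eliminating $c_n=\hat c_2$. This is longer but self-contained; the paper's one-line deduction implicitly needs one to check that $m_{i+2}$ actually lands in the arc-component of $\pi_{k+i}^{-1}([c_2,\hat c_2])$ containing $\rho$, which your argument sidesteps. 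For $\kappa=3$, the paper stays in the $(k+i)$-coordinate and records $c\in[c_3,c_1]=\pi_{k+i}(A_{i+1})$, tacitly assuming this suffices for $m_i\in A_{i+1}$. You instead lift the arc $B=[m_i,\rho]$ to the $(k+i+1)$-coordinate, use the fixed-point relation $1-r/s=r$ to identify the correct component of $T_s^{-1}([c,r])$, and then translate $\kappa=3$ into the inequality $1-\tfrac{1}{2s}\le\hat c_2$. This is more explicit and makes the membership $B\subset A_{i+1}$ transparent, at the cost of a small computation the paper avoids.
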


\begin{proof}
Since $\pi_{k+i}(A_i)=[c_2,\hat{c}_2]$ we have $\pi_{k+i}(m_{i})=c$. Moreover $T_{s}^2(c)=c_2$ and thus $\pi_{k+i}(m_{i+2})=c_2$. Because $A_i\subset A_{i+2}$ it follows that $m_{i+2}\in \partial A_{i}$.

To prove the second statement, observe that $\pi_{k+i}(m_{i+1})=c_1\notin [c_2,\hat{c}_2]$.

For the third statement first assume that $\kappa=3$; it follows that $c\in [c_3,c_1]$ and so $m_i\in A_{i+1}$.
If $\kappa>3$ then $c_3<c$ and thus $c\notin \pi_{k+i}(A_{i+1})$, so it follows that $m_i\notin A_{i+1}$.
\end{proof}

By Lemma~\ref{lem:mid} $m_{i+2} \in \partial A_i$.
We denote the other boundary point of $A_i$ by $\hat m_{i+2}$.

\begin{figure}[ht]
\unitlength=8mm
\begin{picture}(15,2)(1,0)
\thicklines
\put(-1,1){\line(1,0){19}}
\thinlines
\put(9,0.9){\line(0,1){0.2}}\put(8.9, 0.6){\small $\rho$}
\put(8,0.9){\line(0,1){0.2}}\put(7.8, 0.6){\small $m_1$}
\put(6,0.9){\line(0,1){0.2}}\put(5.8, 0.6){\small $m_3$}
\put(10,0.9){\line(0,1){0.2}}\put(9.8, 0.6){\small $\hat m_3$}
\put(3,0.9){\line(0,1){0.2}}\put(2.8, 0.6){\small $m_5$}
\put(13,0.9){\line(0,1){0.2}}\put(12.8, 0.6){\small $\hat m_5$}
\put(0,0.9){\line(0,1){0.2}}\put(-0.2, 0.6){\small $m_7$}
\put(1.5,0.9){\line(0,1){0.2}}\put(1.3, 1.2){\small $\hat{m_8}$}
\put(16,0.9){\line(0,1){0.2}}\put(15.8, 0.6){\small $\hat m_7$}
\put(11,0.9){\line(0,1){0.2}}\put(10.8, 1.2){\small $m_2$}
\put(7,0.9){\line(0,1){0.2}}\put(6.8, 1.2){\small $\hat m_4$}
\put(14,0.9){\line(0,1){0.2}}\put(13.8, 1.2){\small $m_4$}
\put(5,0.9){\line(0,1){0.2}}\put(4.8, 1.2){\small $\hat m_6$}
\put(17,0.9){\line(0,1){0.2}}\put(16.8, 1.2){\small $m_6$}
\end{picture}
\caption{The structure of the arc-component $\Rr$; $A_i=[m_{i+2}, \hat{m}_{i+2}]$ has midpoint $m_i$.} \label{fig:orderA_i}
\end{figure}
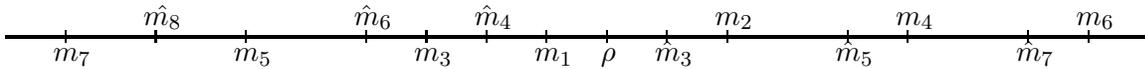

Note that all properties of the $k$-link symmetric arcs $\{A_i\}_{i\in\N}$ proved in this section are topological, meaning they are preserved under a
homeomorphism.

\subsection{$\eps$-symmetry}

\begin{definition}\label{def:eps-sym}
Let $J:=[a, b]\subset [c_2,c_1]$ be an interval. The map $f\colon J\to \mathbb{R}$ is called {\em$\eps$-symmetric} if there is a continuous bijection $x\mapsto i(x)=:\hat{x}$ swapping $a$ and $b$, such that $|f(x) - f(\hat x)|<\eps$ for all $x\in J$.
Note that $i\colon J\to J$ has a unique fixed point $m$. We say that $f$ is $\eps$-symmetric with {\em center} $m$ (or just $\eps$-symmetric {\em around} $m$).
\end{definition}

\begin{figure}[ht]
\unitlength=7mm
\begin{picture}(15,6)(0,0.3)
\thicklines
\put(3.7,0){\line(1,6){0.95}}
\put(5.6,0){\line(-1,6){0.95}}
\put(5.6,0){\line(1,6){0.15}}
\put(5.85,0.3){\line(-1,6){0.1}}
\put(5.85,0.3){\line(1,6){0.1}}
\put(6.1,0){\line(-1,6){0.15}}
\put(6.1,0){\line(1,6){0.6}}
\put(7.3,0){\line(-1,6){0.6}}
\put(7.3,0){\line(1,6){1}}
\put(8.4,5.45){\line(-1,6){0.1}}
\put(8.423,5.45){\line(1,6){0.1}}
\put(9.535,0){\line(-1,6){1}}

\thinlines
\put(8,5.2){\oval(10,2)[l]}
\put(8,5.2){\oval(5,2)[r]}
\put(8,3.8){\oval(10,2)[l]}
\put(8,3.8){\oval(5,2)[r]}
\put(8,2.3){\oval(10,2)[l]}
\put(8,2.3){\oval(5,2)[r]}
\put(8,0.8){\oval(10,2)[l]}
\put(8,0.8){\oval(5,2)[r]}
\put(11,5){\small $<\eps$}
\put(11,3.5){\small $<\eps$}
\put(11,2){\small $<\eps$}
\put(11,0.5){\small $<\eps$}
\end{picture}
\caption{A graph of $\eps$-symmetric map.}
\label{fig:eps-sy}
\end{figure}

\begin{remark}
Let $A\subset\Rr$ be a $k$-link symmetric arc with midpoint $m_A$, where $\mesh(\chain_k)<\eps.$
If $i$ is such that $\pi_{k+i}|_A\colon A\to \pi_{k+i}(A)$ is injective, then $T^i|_{\pi_{k+i}(A)}$ is $\eps$-symmetric around $\pi_{k+i}(m_A)$, see Figure~\ref{fig:eps-sy}.
\end{remark}

Next we restate Proposition 3.6.\ from \cite{BBS}
although the definition of $\eps$-symmetry is slightly generalized here. However, all arguments in the proof of Proposition 3.6. from \cite{BBS} with the new definition of $\eps$-symmetry 
remain the same.

\begin{proposition}\label{prop:3.6}
For every $\delta>0$ there exists $\eps>0$  such that for every interval $H=[a, b] \owns m$ such that $|m-c|, |c-a|, |c-b|>\delta$, $T^n|_H$ is not $\eps$-symmetric around $m$ 
for every $n\geq 0$. 
\end{proposition}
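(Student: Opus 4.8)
The statement is a quantitative non-symmetry result: intervals containing the critical point well in their interior, and bounded away from $c$ at both endpoints, cannot have iterates that are nearly symmetric about their center. The plan is to argue by contradiction and compactness. Suppose the conclusion fails for some $\delta>0$; then for every $\eps>0$ there is an interval $H_\eps=[a_\eps,b_\eps]\owns m_\eps$ with $|m_\eps-c|,|c-a_\eps|,|c-b_\eps|>\delta$ and an iterate $n_\eps\ge 0$ such that $T_s^{n_\eps}|_{H_\eps}$ is $\eps$-symmetric around $m_\eps$. Taking $\eps=1/j\to 0$ produces a sequence of such intervals; passing to a subsequence, $a_\eps\to a$, $b_\eps\to b$, $m_\eps\to m$ with $a\le m\le b$ and $|m-c|,|c-a|,|c-b|\ge\delta$. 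The key point is that $[a,b]$ is a genuine interval with $c$ in its interior.

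The heart of the argument exploits that the critical point $c$ is a turning point of every $T_s^{n}$ with $n\ge 1$: there is a maximal interval $(p_\eps,q_\eps)$ around $c$ on which $T_s^{n_\eps}$ is monotone on each side of $c$, i.e.\ $c$ is the unique turning point of $T_s^{n_\eps}$ in $(p_\eps,q_\eps)$. Since $T_s$ is long-branched when the critical point is non-recurrent (as noted after Lemma~\ref{lem:Mis}), but more robustly: because $|c-a_\eps|>\delta$, there must be at least one turning point of $T_s^{n_\eps}$ strictly between $c$ and one of the endpoints unless the branch of $T_s^{n_\eps}$ adjacent to $c$ is long. I would instead use the following cleaner dichotomy. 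Let $i\le n_\eps$ be the largest index with $T_s^i(c)$ still to be iterated; consider the point $c'\in H_\eps$ on the opposite side of $m_\eps$ from $c$ with $i(c)=c'$ in the $\eps$-symmetry bijection. Then $|T_s^{n_\eps}(c)-T_s^{n_\eps}(c')|<\eps$, but $T_s^{n_\eps}(c)=c_{n_\eps}$. Since $c$ is a turning point, $T_s^{n_\eps}(c)$ is a local extremum of $T_s^{n_\eps}$, so on a neighborhood of $c$ the graph lies entirely on one side of the horizontal line $y=c_{n_\eps}$; for the graph near $c'$ to come within $\eps$ of $c_{n_\eps}$, the map $T_s^{n_\eps}$ must have a turning point arbitrarily close to $c'$ as well, forcing the distance between adjacent turning points of $T_s^{n_\eps}$ near $c$ and near $c'$ to be small — but these two turning points are separated by arc-length roughly $2|m_\eps-c|>2\delta$, and between them the oscillation of $T_s^{n_\eps}$ is small, contradicting that each full monotone branch of a tent-map iterate has image the whole interval $[0,s/2]$ (or more precisely, that consecutive turning points at distance $>2\delta$ must have images differing by at least the length of a full branch, which is bounded below).

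The main obstacle I anticipate is making the last step uniform in $\eps$ and $n_\eps$: one needs a lower bound, independent of $n$, on how much $T_s^n$ must vary between the turning point $c$ and any other turning point at distance $>2\delta$ from it. This is exactly where long-branchedness (hence non-recurrence of $c$) enters — it guarantees a $\delta'>0$ with $|T_s^n(x)-T_s^n(y)|>\delta'$ for adjacent turning points $x,y$ of $T_s^n$, and iterating this across the $\Theta(\delta)$-many monotone branches between $c$ and $c'$ gives the needed uniform contradiction once $\eps<\delta'$. I would close by choosing $\eps$ smaller than this $\delta'$ at the outset and observing that the $\eps$-symmetry hypothesis is then already violated for the interval $[a,b]$ itself (or any $H_\eps$ with $\eps$ small), which contradicts the assumed failure of the proposition. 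Since this is essentially Proposition~3.6 of \cite{BBS} with the mildly generalized notion of $\eps$-symmetry, and every argument there goes through verbatim under the new definition, I would present this as a short reduction to that reference rather than reproving it in full.
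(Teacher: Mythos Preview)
Your final disposition --- to cite Proposition~3.6 of \cite{BBS} and note that the argument there survives the mild generalisation of $\eps$-symmetry --- is exactly what the paper does; there is no independent proof here, only the reference.

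That said, the sketch you offer before arriving at the citation has a real gap worth flagging. You lean on long-branchedness (and hence on non-recurrence of $c$) to get a uniform lower bound $\delta'$ on the image-length of adjacent branches, and you identify this as ``exactly where long-branchedness enters.'' But Proposition~\ref{prop:3.6} carries no non-recurrence hypothesis: it is imported from \cite{BBS}, where it is proved for \emph{all} tent maps with slope in $(\sqrt{2},2]$, and the paper explicitly signals this by remarking, just before Proposition~\ref{prop:eps-symmetric}, that it is the \emph{subsequent} proposition and corollary that first require non-recurrence. So your sketch, taken at face value, proves only a special case. In the recurrent setting branches of $T_s^n$ can be arbitrarily short, and the step ``iterating this across the $\Theta(\delta)$-many monotone branches between $c$ and $c'$ gives the needed uniform contradiction'' breaks down: there need not be $\Theta(\delta)$-many branches of uniformly bounded-below image size between $c$ and $c'$. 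The proof in \cite{BBS} avoids this by a more combinatorial argument tracking how near-symmetry forces the kneading structure, rather than by crude branch-length bounds. If you intend only to cite \cite{BBS}, the sketch is harmless; if you intend it as a self-contained argument, it does not cover the statement as written.
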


The next proposition and corollary rely on the non-recurrence of 
the critical point. Although Section~\ref{sec:complete}
does not need the non-recurrence, Section~\ref{sec:Rfixed} again relies on this assumption.

\begin{proposition}\label{prop:eps-symmetric}
Assume that $s\in (\sqrt{2},2]$ is such that $c$ is non-recurrent and not preperiodic.
For every $\delta > 0$ there exists $\eps > 0$ with the following property:
If $c\in J \subset [c_2,c_1]$ is an interval with midpoint $m$,
and $|c-\partial J| \ge \delta$, then
for each $n \ge 0$, either
$T_s^n|_J$ is not $\eps$-symmetric or
$|c-m| \le \eps s^{-n}$.
\end{proposition}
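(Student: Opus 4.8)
The plan is to run a compactness argument that upgrades Proposition~\ref{prop:3.6} using the hypothesis that the critical point is non-recurrent (so $\omega(c)$ does not contain $c$, and $T_s$ is long-branched). Suppose for contradiction that the conclusion fails: there is $\delta > 0$ such that for every $\eps > 0$ we can find an interval $c \in J_\eps \subset [c_2, c_1]$ with midpoint $m_\eps$, $|c - \partial J_\eps| \ge \delta$, and $n_\eps \ge 0$ with $T_s^{n_\eps}|_{J_\eps}$ being $\eps$-symmetric yet $|c - m_\eps| > \eps s^{-n_\eps}$. Taking $\eps = 1/j$ produces a sequence of intervals $J_j$, midpoints $m_j$, and exponents $n_j$. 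First I would note that $n_j \to \infty$: if $n_j$ stayed bounded along a subsequence, then passing to a further subsequence along which $J_j \to J$ (a nondegenerate interval with $|c - \partial J| \ge \delta$) and $n_j$ is constant, the limit map $T_s^{n}|_J$ would be exactly symmetric around the midpoint of $J$ with $c$ in its interior and $|c - \partial J| \ge \delta$, contradicting Proposition~\ref{prop:3.6} (applied with, say, $\delta' = \delta/2$, once $|c - m_j|$ is also bounded below — and if $|c - m_j| \to 0$ we instead directly get $\eps$-symmetry around a point arbitrarily close to $c$, still excluded by Proposition~\ref{prop:3.6} for small enough $\eps$ since then the center can be pushed to distance $> \delta/2$ from both endpoints while keeping $2\eps$-symmetry). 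So $n_j \to \infty$.

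The heart of the argument is then to examine the orbit of the \emph{center} $m_j$ and its image $T_s^{n_j}(m_j)$. Since $T_s^{n_j}|_{J_j}$ is $(1/j)$-symmetric around $m_j$ and $c \in J_j$ with $|c - m_j| > s^{-n_j}/j$, the point $c$ lies strictly to one side of $m_j$; let $\hat c$ denote its mirror image under the involution $i$ of $J_j$, so $|T_s^{n_j}(c) - T_s^{n_j}(\hat c)| < 1/j$, i.e. $c_{n_j}$ is within $1/j$ of $T_s^{n_j}(\hat c)$. The key geometric observation is that on the arc $[c, \hat c] \subset J_j$ — or on the appropriately chosen sub-interval on which $T_s^{n_j}$ is monotone up to the turning point forced by $c$ — the map $T_s^{n_j}$ turns at $c$, and $\eps$-symmetry forces the "fold" of the graph at $c$ to be $\eps$-shallow. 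Concretely, because $c$ is a turning point of $T_s^{n_j+1}$, the two branches of $T_s^{n_j}$ emanating from $c$ have slopes $\pm s^{n_j}$, and $\eps$-symmetry around $m_j \ne c$ forces $|T_s^{n_j}(c) - T_s^{n_j}(\widehat{m_j})| < \eps$ where $\widehat{m_j}$... actually the cleanest route is: the mirror image under $i$ of the point at distance $|c - m_j|$ on the other side of $m_j$ is $c$ itself is false — rather, let $p_j$ be the mirror image of $c$; then $|c - m_j| = |p_j - m_j|$ forces $p_j$ to be at distance $\ge |c-m_j|$ inside $J_j$, $T_s^{n_j}$ is monotone on $[\min(c,p_j),\max(c,p_j)]$ except at $c$, and evaluating the $\eps$-symmetry inequality at the point $q_j \in J_j$ with $T_s^{n_j}(q_j)$ extremal yields that the full oscillation of $T_s^{n_j}$ between $c$ and $p_j$ is $< \eps = 1/j$. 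But that oscillation is at least $s^{n_j} \cdot \min\{|c - p_j|, \text{dist to nearest turning point}\} \ge$ (using long-branchedness and $|c - m_j| > s^{-n_j}/j$) a quantity that does \emph{not} go to zero faster than $1/j$ — here is where one derives the contradiction with the assumed lower bound $|c - m_j| > \eps s^{-n_j}$, which was precisely calibrated so that $s^{n_j}|c - m_j| > \eps = 1/j$, making the oscillation exceed $1/j$.

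The main obstacle, and the step requiring care, is the geometric estimate in the previous paragraph: controlling the oscillation of $T_s^{n_j}$ near its turning point at $c$ and relating it cleanly to $s^{n_j}|c - m_j|$. One must be careful that $J_j$ may contain several turning points of $T_s^{n_j}$, not just the one at $c$; the resolution is to restrict attention to the sub-interval $J_j' \subset J_j$ bounded by the two turning points of $T_s^{n_j}$ adjacent to $c$ (one on each side), which is still symmetric under $i$ — or at worst to work on $J_j' \cap i(J_j')$ — and to invoke long-branchedness (valid since $c$ is non-recurrent) to guarantee that $|T_s^{n_j}(c) - T_s^{n_j}(\text{adjacent turning point})| > \delta_0$ for a uniform $\delta_0 > 0$, which forces $|c - m_j|$ itself to be bounded below by $\delta_0 s^{-n_j}$ whenever $\eps < \delta_0$ — and this, combined with $\eps$-symmetry pinning $c_{n_j} = T_s^{n_j}(c)$ within $\eps$ of the value $T_s^{n_j}(\widehat{c})$ at the mirror point, places $c_{n_j}$ within $\eps$ of $\omega(c)$-related accumulation, forcing $c_{n_j} \to c$ along the subsequence and contradicting non-recurrence. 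I would organize the final contradiction around exactly this last point: $\eps$-symmetry with center $m_j$ bounded away from $c$ forces the critical value $c_{n_j}$ to be $\eps$-close to the value attained at the mirror image of $c$, which (by long-branchedness controlling the depth of the fold) can only happen if $c$ is itself $\eps'$-close to the critical orbit, violating non-recurrence for $j$ large.
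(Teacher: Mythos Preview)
Your approach has a genuine gap, and it differs fundamentally from the paper's argument.

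The central problem is that neither of your two proposed contradictions actually follows from the hypotheses. First, the claim that ``the full oscillation of $T_s^{n_j}$ between $c$ and $p_j$ is $< \eps = 1/j$'' is not a consequence of $\eps$-symmetry: the definition only gives $|T_s^{n_j}(x) - T_s^{n_j}(i_j(x))| < \eps$ pointwise, and says nothing about the oscillation of $T_s^{n_j}$ along the arc $[c, p_j]$. Second, your endgame ``$c_{n_j} \to c$, contradicting non-recurrence'' does not follow either. From $\eps$-symmetry you only get $|c_{n_j} - T_s^{n_j}(i_j(c))| < 1/j$; there is no reason whatsoever that $T_s^{n_j}(i_j(c))$ should be near $c$. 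The phrase ``$\omega(c)$-related accumulation'' does not name an actual mechanism. The compactness wrapper is also a detour --- the argument in the case of bounded $n_j$ with $m_j \to c$ is incomplete (Proposition~\ref{prop:3.6} does not apply when the center is close to $c$), and in any case the statement is not a limiting one.

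The paper's proof is short and direct, and rests on an idea you are missing: a \emph{reflection cascade}. The point $c$ is always an \emph{exact} center of symmetry of $T_s^n$ (since $T_s$ folds at $c$), while $m$ is an $\eps$-center by hypothesis. Non-recurrence guarantees that $T_s^n$ is monotone on a one-sided neighborhood of $c$ of length $\eps s^{-n}$; since $|c-m| > \eps s^{-n}$, the image $T_s^n([c,m])$ has length at least $\eps$, so $c$ and $m$ are genuinely distinct centers. Now compose the two reflections: $c' := R_m(c)$ is again a center of approximate symmetry, then $c'' := R_{c'}(c)$, and so on. The distances $|c - c^{(k)}|$ grow, and once some $c^{(k)}$ satisfies $|c - c^{(k)}| > \delta$, Proposition~\ref{prop:3.6} applies and gives the contradiction. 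You correctly sensed that non-recurrence and the calibration $\eps s^{-n}$ matter, but the way to exploit them is this two-center reflection trick, not an oscillation bound.
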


\begin{proof}\label{cor:eps-symmetric}
Fix $\delta>0$.
In the case that $|c-m| > \delta$, this is Proposition~\ref{prop:3.6}.
Therefore assume that  $\eps s^{-n} < |c-m| \le \delta$.
Because $c$ is not recurrent we can find $\eps>0$ so small that the map $T_s^n$ 
is monotone on a one-sided neighbourhood of $c$ of length 
$\eps s^{-n}$, and maps it therefore onto an interval
of length $\eps$. This means that $T_s^n([c,m])$ has length $\eps$,
so that $c$ and $m$ must be distinct centres of $\eps$-symmetry of $T_s^n$. 
Define the reflection around $a \in \R$ as $R_a(x) := 2a-x$,
then 
$c' := R_m(c) \in J$ is another center of $\eps$-symmetry,
and so is $c'' := R_{c'}(c)$. We continue this way until we find
a center of $\eps$-symmetry $m'$ such that $|c-m'| > \delta$
and apply Proposition~\ref{prop:3.6}.
\end{proof}

From now on assume that $\sqrt{2} < s\neq \tilde{s} \le 2$ and that the tent maps $T_s$ and $T_{\tilde{s}}$ have non-recurrent infinite critical orbits. 

Let $\tilde{c}$ denote the critical point of the map $T_{\tilde{s}}$ and let $\tilde{c}_i:= T_{\tilde{s}}^{i}$ for $i\in \N$.
Set
\begin{equation}\label{eq:delta}
\begin{array}{r}
\delta < \frac{1}{100}\min\Big\{ |c-c_i|, |\tilde c-\tilde c_i|, |c-r|, |\tilde c - \tilde r|, |T^{n}_{\tilde{s}}(x)-T^{n}_{\tilde{s}}(y)| :\text{for every }\\
 n,i\in \N 
\text{ and adjacent turning points
$x$ and $y$ of } T^n_{\tilde{s}} \Big\}. 
\end{array}
\end{equation}

\begin{corollary}\label{cor:2}
Suppose that $c$ is non-recurrent.
If $i \ge 1$ and $J \supset (c_i-\delta, c_i + \delta)$,
then $T_s^n|_J$ is not $\eps$-symmetric with midpoint $c_{i}$
for any $n \ge 0$.
\end{corollary}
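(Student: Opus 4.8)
The plan is to derive this as a direct consequence of Proposition~\ref{prop:eps-symmetric}, reading the conclusion of that proposition in the contrapositive direction with the center taken to be $c_i$ rather than $c$. First I would set up the dictionary: we want to rule out $\eps$-symmetry of $T_s^n|_J$ around the point $c_i$, whereas Proposition~\ref{prop:eps-symmetric} is phrased for intervals containing $c$ with midpoint near $c$. The bridge is that $\eps$-symmetry of $T_s^n|_J$ around $c_i$ is, by definition, a statement about the reflection $i$ of $J$ fixing $c_i$ and about $|T_s^n(x)-T_s^n(\hat x)|<\eps$; composing with one more application of $T_s$ turns this into information about $T_s^{n+1}$ near $c_{i+1}$, and iterating $i$ times relates everything back to a neighbourhood of $c$ of comparable size (here the non-recurrence / long-branchedness is what guarantees that on the relevant one-sided neighbourhoods the iterates $T_s, T_s^2, \ldots$ stay monotone, so lengths only get multiplied by powers of $s$ and nothing is folded).

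Concretely, the key steps in order: (1) Fix $\delta$ as in \eqref{eq:delta}; in particular $|c-c_i|, |c-r| > 100\delta$ for all $i$, so $c_i$ is bounded away from $c$. (2) Suppose for contradiction that for some $n \ge 0$ and some $J \supset (c_i-\delta, c_i+\delta)$ the map $T_s^n|_J$ is $\eps$-symmetric around $c_i$ with $\eps$ small (to be chosen). Restrict attention to the subinterval $H := [c_i-\delta, c_i+\delta] \owns c_i$. (3) Since $c$ is non-recurrent, by choosing $\eps$ small enough we may assume $T_s^j$ is monotone on the $\delta$-neighbourhood of $c$ that we'll hit; more to the point, $c_i$ is at distance $>100\delta$ from $c$, so $T_s$ is a local homeomorphism near $c_i$ and $T_s(H)$ is an interval around $c_{i+1}$ of length comparable to $\delta$ — and $\eps$-symmetry of $T_s^n|_H$ around $c_i$ forces $T_s^{n-1}|_{T_s(H)}$ to be $C\eps$-symmetric around $c_{i+1}$ for a controlled constant. (4) Here I would instead run the argument the efficient way: $\eps$-symmetry of $T_s^n$ around $c_i$ on $H$ means $H$ and its reflection about $c_i$ have $T_s^n$-images within $\eps$; but $c_i = T_s^i(c)$, and pulling $H$ back to a neighbourhood of $c$ is not what we want — rather, we observe that $m := c_i$ satisfies $|m - c| > \delta$, so we are exactly in the first case of the proof of Proposition~\ref{prop:eps-symmetric}, which invokes Proposition~\ref{prop:3.6} with the interval $H \owns c_i$ satisfying $|c_i - c|, |c - \partial H|$ — wait, $\partial H$ need not be far from $c$ in general, but since $H \subset J \subset [c_2,c_1]$ and $|c_i-c|>100\delta$ while $H$ has radius only $\delta$, every point of $H$ is within $\delta$ of $c_i$ hence at distance $> 99\delta > \delta$ from $c$. (5) Thus $H$ is an interval with $|m-c|, |c-\partial H| > \delta$ (taking the $\delta$ of Proposition~\ref{prop:3.6} to be, say, $99\delta$ or simply applying it with a smaller threshold), and Proposition~\ref{prop:3.6} yields an $\eps > 0$ such that $T_s^n|_H$ — and hence $T_s^n|_J$, which restricts to it — is not $\eps$-symmetric around $c_i$ for any $n \ge 0$, the desired contradiction.

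The main obstacle, and the only place that needs genuine care, is step (4)–(5): verifying that the interval $H = (c_i - \delta, c_i + \delta)$ genuinely falls under the hypotheses of Proposition~\ref{prop:3.6}, i.e. that both $c_i$ and the endpoints of $H$ are uniformly bounded away from $c$. This is immediate from the choice of $\delta$ in \eqref{eq:delta} — which makes $|c - c_i| > 100\delta$ — combined with the fact that $H$ has radius only $\delta$, so by the triangle inequality $\mathrm{dist}(H, c) > 99\delta$. One then applies Proposition~\ref{prop:3.6} with its " $\delta$" set to (for instance) $50\delta$ to obtain the uniform $\eps$. The non-recurrence hypothesis is invoked only implicitly, through the choice \eqref{eq:delta} (which uses $|c-c_i|$ bounded below, equivalently non-recurrence of $c$) and through the fact that Proposition~\ref{prop:eps-symmetric}/\ref{prop:3.6} are available; no new use of it is required here. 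I would present the corollary's proof in two or three sentences: restrict to $H$, note $H$ is $\delta$-far from $c$ by \eqref{eq:delta}, and quote Proposition~\ref{prop:3.6}.
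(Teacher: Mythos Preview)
Your approach (restrict to $H=[c_i-\delta,c_i+\delta]$, note that $H$ lies at distance $>99\delta$ from $c$, and invoke Proposition~\ref{prop:3.6}) is different from the paper's, and as written it has a gap at step~(5). From ``$T_s^n|_H$ is not $\eps$-symmetric around $c_i$'' you cannot conclude ``$T_s^n|_J$ is not $\eps$-symmetric around $c_i$'': the bijection $i_J:J\to J$ in Definition~\ref{def:eps-sym} is an arbitrary decreasing map fixing $c_i$, and it need not carry the symmetric interval $H$ to itself, so $\eps$-symmetry on $J$ does not restrict to $\eps$-symmetry on $H$. The fix is short: one of the two intervals $[c_i-\delta,\, i_J(c_i-\delta)]$ or $[i_J^{-1}(c_i+\delta),\, c_i+\delta]$ is contained in $H$ (if the first is not, apply $i_J^{-1}$ to the inequality $i_J(c_i-\delta)>c_i+\delta$ to see the second is); on that subinterval the map $\tilde\imath$ given by $i_J$ on one half and $i_J^{-1}$ on the other is a genuine involution fixing $c_i$ with $|T_s^n(x)-T_s^n(\tilde\imath(x))|<\eps$ throughout. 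Both endpoints lie in $H$, hence at distance $>99\delta$ from $c$, and now Proposition~\ref{prop:3.6} does give the contradiction.

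The paper avoids this wrinkle by pushing forward rather than restricting: it takes $r\ge 0$ minimal such that some preimage $c_{-r}$ of $c$ lies in $J$, so that $T_s^r|_J$ is injective and $J':=T_s^r(J)$ is a neighbourhood of $c$; conjugating $i_J$ by the homeomorphism $T_s^r|_J$ shows that $T_s^{n-r}|_{J'}$ is $\eps$-symmetric around $c_{i+r}$, and then Proposition~\ref{prop:eps-symmetric} (together with $|c-c_{i+r}|>\delta$) finishes. Conjugation by a bijection preserves $\eps$-symmetry exactly, so no restriction issue arises. Your route, once patched, is arguably more direct, since it uses only Proposition~\ref{prop:3.6} and never needs to locate a preimage of $c$ in $J$.
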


\begin{proof}
Let $r \ge 0$ be minimal such that $c_{-r}\in J$. Then $T^{r}_{s}$
maps $J$ injectively onto a neighbourhood of $c$, so we can 
apply Proposition~\ref{prop:eps-symmetric} and use the fact that $|c-c_i|> \delta$ for $i\in \N$ to finish
the proof.
\end{proof}

\subsection{Completeness of the sequence $\{ A_i\}_{i \in \N}$}
\label{sec:complete}

Note that results in this section do not require the non-recurrence assumption.

\begin{definition}
	Let $\{G_i\}_{i\in \N}\subset \Uu$ be a sequence of $k$-link symmetric arcs with midpoints $m_i$ respectively and $x\in G_{i}$, where $x\in \Uu$, for all
	$i\in \N$. The sequence $\{G_i\}_{i\in \N}$ is called \emph{complete} with respect to  $x$ if every $k$-link symmetric arc $G\ni x$ not contained in a single link of a chain $\chain_k$ has midpoint in $\{m_i\}_{i\in \N}$. 
\end{definition}

\begin{proposition}
The sequence of $k$-link symmetric arcs $\{A_i\}_{i\in \N}$ is a 
complete sequence of $k$-link symmetric arcs with respect to $\rho$.
\end{proposition}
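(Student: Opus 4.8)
The plan is to argue by contradiction: suppose $G \ni \rho$ is a $k$-link symmetric arc, not contained in a single link of $\chain_k$, whose midpoint $m_G$ is not among $\{m_i\}_{i\in\N}$. Since $G$ is $k$-link symmetric and properly passes through at least three links, $\pi_{k+i}|_{G}$ for suitable $i$ realizes $T_s^i|_{\pi_{k+i}(G)}$ as an $\eps$-symmetric map around $\pi_{k+i}(m_G)$ once $\mesh(\chain_k) < \eps$, via the Remark following Definition~\ref{def:eps-sym}. The idea is to locate $m_G$ relative to the nested chain of arcs $A_i$. Because $\rho \in G$ and, by Lemma~\ref{lem:rho}, $\rho \in [m_i, m_{i+1}]$ while $\rho \notin [m_i, m_{i+2}]$, the arcs $A_i$ exhaust $\Rr$ around $\rho$ in a controlled way (see Figure~\ref{fig:orderA_i}): every proper $k$-link symmetric arc through $\rho$ must, for $i$ large enough, contain $A_i$ or be contained in $A_{i}$, because $\bigcup_i A_i$ is a neighbourhood basis direction-wise of $\rho$ inside $\Rr$ and any arc in $\Rr$ through $\rho$ is comparable to the $A_i$'s in arc-length. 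So there is a first index $i$ with $A_i \not\subset G$ and a last index with $A_i \subset G$; the midpoint $m_G$ then lies in the "annular" region $A_{i+1}\setminus A_i$ on one side or corresponds to one of the turning points $c_j$ of $\pi_{k+i}(G)$ under projection.

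Next I would push this to a contradiction using non-recurrence, i.e.\ Corollary~\ref{cor:2} and Proposition~\ref{prop:eps-symmetric}. If $m_G \ne m_i$ for all $i$, then $\pi_{k+n}(m_G)$ is at distance $> \delta$ (for a suitable $\delta$ as in~\eqref{eq:delta}) from $c$ and from every $c_j$, for all $n$ up to the level where $\pi_{k+n}|_G$ becomes injective — here one uses that the only points of $\Rr$ whose deep projections return arbitrarily close to $c$ are exactly the $m_i$ (that is what makes them the midpoints of the $A_i$). Then $T_s^{n}|_{\pi_{k+n}(G)}$ is a map $\eps$-symmetric around a centre bounded away from $c$ and from all critical values; by Proposition~\ref{prop:3.6} (if the centre is bounded from $c$) or Corollary~\ref{cor:2} (if the centre is near some $c_j$) this is impossible for small enough $\eps$, i.e.\ for fine enough $\chain_k$. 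Hence $m_G \in \{m_i\}_{i\in\N}$, which is exactly completeness with respect to $\rho$.

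The step I expect to be the main obstacle is the combinatorial bookkeeping in the first paragraph: precisely matching an arbitrary $k$-link symmetric arc $G$ through $\rho$ against the ladder $\{A_i\}$ and showing that its midpoint, if not one of the $m_i$, projects (at every relevant level) to a point uniformly bounded away from $c$ and from all $c_j$. This requires Lemmas~\ref{l2}, \ref{lem:kappa}, \ref{lem:rho}, \ref{lem:mid} together to control how the $A_i$ sit inside one another and where the new midpoints $m_{i+2}, \hat m_{i+2}$ fall, and then a uniform-gap estimate using~\eqref{eq:delta}. Once the geometry is pinned down, the analytic contradiction via $\eps$-symmetry is routine.
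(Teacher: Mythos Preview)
Your proposal has a genuine gap. The central claim --- that if $m_G \neq m_i$ for all $i$, then $\pi_{k+n}(m_G)$ stays at distance $>\delta$ from $c$ and from every $c_j$ for all $n$ up to the level where $\pi_{k+n}|_G$ becomes injective --- is false as stated. The midpoint $m_G$ is itself a $k$-point: it has some $k$-level $L = L_k(m_G)$, so $\pi_{k+L}(m_G) = c$ exactly, and $\pi_{k+n}(m_G) = c_{L-n}$ for every $n < L$. Your parenthetical justification, that ``the only points of $\Rr$ whose deep projections return arbitrarily close to $c$ are exactly the $m_i$'', conflates \emph{salient} $k$-points with \emph{all} $k$-points. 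The $m_i$ are precisely the salient ones (those closest to $\rho$ at each level), but $\Rr$ is full of non-salient $k$-points whose projections also hit $c$; the putative $m_G$ would be one of them. So the setup for Proposition~\ref{prop:3.6} or Corollary~\ref{cor:2} is simply not available at the level you want.

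What the paper does instead is exactly the ``combinatorial bookkeeping'' you flagged as the hard part, and it supplies the missing idea you do not have: choose $m$ of \emph{minimal} arc-length distance to $\rho$ among all offending midpoints, locate it in some $(m_{i+2}, m_i)$, and then \emph{reflect}. In Case~I (when $m$ is closer to $m_i$ than to $m_{i+2}$ in link-count), reflecting $A$ over $m_i$ produces a new $k$-link symmetric arc through $\rho$ with midpoint $\hat m$; by minimality $\hat m$ must equal some $m_{j'}$ with $j' < i$, and then $\pi_{k+i}(\hat m) = c_{i-j'}$ is bounded away from $c$ by the choice of $\delta$ in~\eqref{eq:delta}, which forces $|\pi_{k+i}(m) - c| > \delta$ as well and delivers the contradiction via Proposition~\ref{prop:3.6}. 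In Case~II one reflects over $m$ and over $m_{i+2}$ to produce a strictly closer offending midpoint, contradicting minimality directly. Note also that the paper's argument uses only Proposition~\ref{prop:3.6} and works \emph{without} the non-recurrence hypothesis (as the paper remarks at the start of \S\ref{sec:complete}); your reliance on Corollary~\ref{cor:2} is both unnecessary and, given the gap above, not actually doing the work you need.
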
 

\begin{proof}
	Assume that there exists a $k$-link symmetric arc $A\ni\rho$ not contained in a single link of $\mathcal{C}_k$, such that its midpoint $m\neq m_i$ for every $i\in \N$. Without loss of generality we can take $m$ closest to $\rho$ (in arc-length) 
	among all midpoints of such arcs. 
 Since $m$ is a $k$-point and there are no $k$-points in $(m_1, m_2)$ we obtain that $m\notin (m_1,m_2)$.
 Thus by Lemma~\ref{lem:rho} there exists $i\in\mathbb{N}$ such that $m\in(m_{i+2},m_i)$. Denote by $\ell_{p_0}, \ldots, \ell_{p_u}, \ldots, \ell_{p_v}, \ldots, \ell_{2p_u}$ the subsequent links containing arc $[m_{i+2},\hat{m}_{i+2}]$, where $m_i\in\ell_{p_u}$, $m\in\ell_{p_v}$, $m_{i+2}\in\ell_{2p_u}$ and $\hat{m}_{i+2}\in \ell_{p_0}$. Note that $\ell_{p_u+p_n}=\ell_{p_u-p_n}, $ for every $n\in\{0, \ldots, u\}$.
	
{\bf Case I:} $p_v-p_u\leq 2p_u-p_v$ (the number of links the arc $[m, m_i]$ goes through is smaller than the number of links the arc $[ m_{i+2},m]$ goes through).
	
Let $a, b$ be the boundary points of $A \cap [m_{i+2},\hat m_{i+2}]$
such that $ m_{i+2} \le b < m_i < a \le \hat m_{i+2}$, where $<$ denotes linear (arc-length) order on $\Uu$.
Observe that $d(b,m_i)>d(a,m_i)$.  
Given $x\in[m_{i+2},\hat{m}_{i+2}]$, let $\hat{x}\in[m_{i+2},\hat{m}_{i+2}]$ 
be such that $[\hat{x}, x]$ is $k$-symmetric with midpoint $m_i$.
Define an arc $\hat{A}:=[\hat{a},\hat{b}]$, see Figure~\ref{fig:case1}.
	
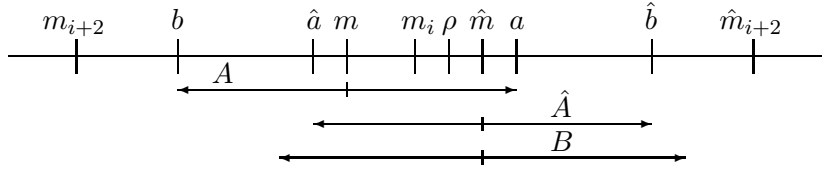
\begin{figure}[ht]
		\unitlength=9mm
		\begin{picture}(10,2.6)(1,-0.8)
		
		\put(0,1){\line(1,0){12}}
		
		\put(1,0.75){\line(0,1){0.5}}
		\put(2.5,0.75){\line(0,1){0.5}}
		\put(6,0.75){\line(0,1){0.5}}
		\put(5,0.75){\line(0,1){0.5}}
		\put(4.5,0.75){\line(0,1){0.5}}
		\put(7.5,0.75){\line(0,1){0.5}}
		\put(7,0.75){\line(0,1){0.5}}
		\put(6.5,0.75){\line(0,1){0.5}}
		\put(9.5,0.75){\line(0,1){0.5}}
		\put(11,0.75){\line(0,1){0.5}}
		
		\put(4.8,1.4){\small $m$}
		\put(4.4,1.4){\small $\hat{a}$}
		\put(0.5,1.4){\small $m_{i+2}$}
		\put(2.4,1.4){\small $b$}
		\put(5.8,1.4){\small $m_i$}
		\put(6.8,1.4){\small $\hat{m}$}
		\put(7.4,1.4){\small $a$}
		\put(6.4,1.4){\small $\rho$}
		\put(9.4,1.4){\small $\hat{b}$}
		\put(10.5,1.4){\small $\hat{m}_{i+2}$}
		
		\put(2.5,0.5){\line(1,0){5}}
		\put(4.5,0){\line(1,0){5}}
		\put(4,-0.5){\line(1,0){6}}
		
		\put(2.5,0.5){\vector(-1,0){0}}
		\put(7.5,0.5){\vector(1,0){0}}
		\put(4.5,0){\vector(-1,0){0}}
		\put(9.5,0){\vector(1,0){0}}
		\put(4,-0.5){\vector(-1,0){0}}
		\put(10,-0.5){\vector(1,0){0}}
		
		\put(3,0.6){\small $A$}
		\put(8,0.1){\small $\hat{A}$}
		\put(8,-0.4){\small $B$}
		
		\put(5,0.4){\line(0,1){0.2}}
		\put(7,-0.1){\line(0,1){0.2}} 
		\put(7,-0.6){\line(0,1){0.2}}
		\end{picture}
		\caption{Case I of the proof.}
		\label{fig:case1}
	\end{figure}
	
	Let $B$ be the maximal $k$-link symmetric arc with midpoint $\hat{m}$. Observe that $\hat{A}\subset B$ because $\hat{A}$ is a reflection of $A\cap[m_{i+2},\hat{m}_{i+2}]$ over $m_i$.
	Since $d(\rho,\hat b) > d(\rho,a)$ and $\hat a \in (m_{i+2},m_i)$, we get that $B\supset\hat{A}\supset(m_i,a)\ni\rho$.
	
By the minimality of $m$ there exists $j'<i$ such that $\hat{m}=m_{j'}$.
	
Now we study $\pi_i(A)$, see Figure~\ref{fig:proj}. 
Since $\rho \in A$, $\pi_i(m)\in(c_2, c)$ and $r\in\pi_i(A)$. 
	
	\begin{figure}[ht]
		\unitlength=9mm
		\begin{picture}(10,5)(2,0)
		
		\put(0,4){\line(1,0){14}}
		
		\put(2,3.75){\line(0,1){0.5}}
		\put(6,3.75){\line(0,1){0.5}}
		
		\put(5,3.75){\line(0,1){0.5}}
		\put(4.9,4.4){\small $m$}
		
		\put(7,3.75){\line(0,1){0.5}}
		\put(7.5,3.75){\line(0,1){0.5}}
		\put(10,3.75){\line(0,1){0.5}}
		
		\put(1.5,4.4){\small $m_{i+2}$}
		\put(5.8,4.4){\small $m_i$}
		\put(6.8,4.4){\small $\hat{m}$}
		\put(7.3,4.4){\small $\rho$}
		\put(9.8,4.4){\small $\hat{m}_{i+2}$}
		
		\put(7,3.2){\vector(0,-1){1}}
		\put(7.2,2.7){\small $\pi_i$}
		
		\put(2,1){\line(1,0){10}}
		
		\put(2,0.75){\line(0,1){0.5}}
		\put(5,0.75){\line(0,1){0.5}}
		\put(6,0.75){\line(0,1){0.5}}
		\put(7,0.75){\line(0,1){0.5}}
		\put(7.5,0.75){\line(0,1){0.5}}
		\put(10,0.75){\line(0,1){0.5}}
		\put(12,0.75){\line(0,1){0.5}}
		
		\put(1.9,0.4){\small $c_2$}
		\put(4.5,1.4){\small $\pi_i(m)$}
		\put(5.9,0.4){\small $c$}
		\put(6.5,1.4){\small $\pi_i(\hat{m})$}
		\put(7.4,0.4){\small $r$}
		\put(9.9,0.4){\small $\hat{c}_2$}
		\put(11.9,0.4){\small $c_1$}
		
		\put(5,0.7){\vector(1,0){1}}
		\put(5,0.7){\vector(-1,0){0}}
		
		\put(5.4,0.3){\small $\delta$}
		
		\put(7.7,4){\vector(1,0){0}}
		\put(7.7,1){\vector(1,0){0}}
		\put(2.1,4){\vector(-1,0){0}}
		\put(2.1,1){\vector(-1,0){0}}
		
		\put(3,3.6){\small $A$}
		\put(2.9,0.6){\small $\pi_{i}(A)$}
		
		\thicklines
		\put(2.1,1){\line(1,0){5.6}}
		\put(2.1,4){\line(1,0){5.6}}
		
		\end{picture}
		\caption{Arc $A$ in projection $\pi_{i}$ as in Case I of the proof.}
		\label{fig:proj}
	\end{figure}
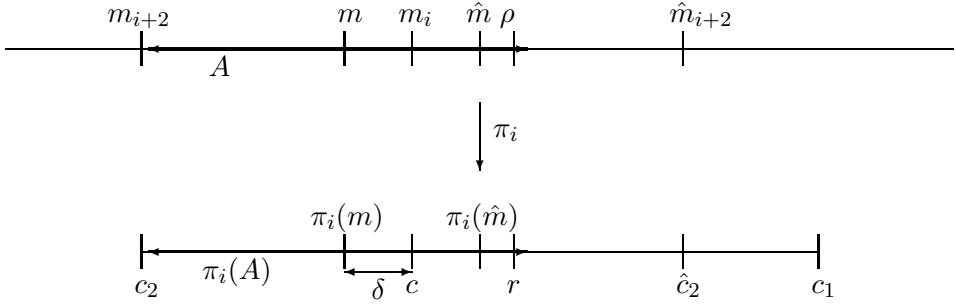
	
	If $|\pi_i(m)-c|>\delta$, we use Proposition~\ref{prop:3.6} to conclude that $A$ is not $k$-link symmetric, a contradiction.
	Assume that $|\pi_i(m)-c|\leq\delta$.
	Note that then also $|\pi_{i}(\hat{m})-c|\leq \delta$. However, $\hat{m}=m_{j'}$ for some ${j'}<i$. Note that $|\pi_i(m)-c| = |\pi_i(\hat m)-c| = |\pi_i(m_{j'})-c|\geq|\pi_i(m_{i-2})-c| > \delta$, a contradiction. The last inequality follows from the fact that $\pi_i(m_{i-2})\in T^{-2}(c)$ and the definition of $\delta$.
	
{\bf Case II:} $p_v-p_u> 2p_u-p_v$ (number of links the arc $[m,m_i]$ goes through is larger than the number of links the arc $[m_{i+2},m]$ goes through).

Here let $\hat{x}$ denote the point such that $[x,\hat{x}]$ is $k$-symmetric with midpoint $m_{i+2}$ for some $x\in [m_{i+4},\hat{m}_{i+4}]$ and $r_m(y)$ the 
	$k$-point with the largest $k$-level (in its link) such that $[y,r_m(y)]$ is $k$-link symmetric with midpoint $m$ for some $y\in [m_{i+2},\hat{m}_{i+2}]$.
	
	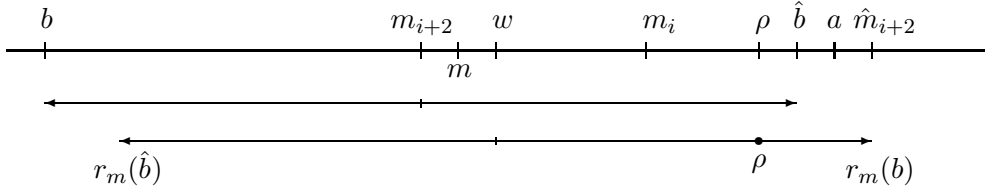
\begin{figure}[ht]
		\unitlength=5mm
		\begin{picture}(10,5)(8,-0.3)
		
		\put(0,3.2){\line(1,0){26}}
		
		\put(1,3){\line(0,1){0.4}}
		\put(11,3){\line(0,1){0.4}}
		\put(12,3){\line(0,1){0.4}}
		\put(13,3){\line(0,1){0.4}}
		\put(17,3){\line(0,1){0.4}}
		\put(20,3){\line(0,1){0.4}}
		\put(21,3){\line(0,1){0.4}}
		\put(22,3){\line(0,1){0.4}}
		\put(23,3){\line(0,1){0.4}}
		
		\put(0.9,3.8){\small $b$}
		\put(10.2,3.8){\small $m_{i+2}$}
		\put(11.7,2.5){\small $m$}
		\put(12.9,3.8){\small $w$}
		\put(16.9,3.8){\small $m_i$}
		\put(19.9,3.8){\small $\rho$}
		\put(20.9,3.8){\small $\hat{b}$}
		\put(21.8,3.8){\small $a$}
		\put(22.5,3.8){\small $\hat{m}_{i+2}$}
		
		\put(1,1.8){\vector(1,0){20}}
		\put(1,1.8){\vector(-1,0){0}}
		\put(11,1.7){\line(0,1){0.2}}
		
		\put(3,0.8){\vector(1,0){20}}
		\put(3,0.8){\vector(-1,0){0}}
		\put(13,0.7){\line(0,1){0.2}}
		
		\put(20,0.8){\circle*{0.2}}
		\put(19.8,0.1){\small $\rho$}
		\put(2.3,-0.2){\small $r_m(\hat{b})$}
		\put(22.3,-0.2){\small $r_m(b)$}
		
		\end{picture}
		\caption{Reflections as in Case II of the proof.}
		\label{fig:case2}
	\end{figure}

	Let $b$ be the endpoint of $A \cap [m_{i+4},m_{i+2}]$ that is the furthest
	away from $\rho$.
	Take $w:=r_m({m}_{i+2})$ and note that $w\in(m,m_i)$ by the assumption 
	for Case II. We reflect the arc $[b,\hat{b}]$ over $m$ and obtain an arc $[ r_m(\hat{b}),r_m(b)]$ (see Figure~\ref{fig:case2}) which is $k$-link symmetric with midpoint $w$. 
Since $\hat\rho\in[m_{i+2}, m_{i+4}]$, we have that  $r_m(\rho)\in[m_{i+2}, m_{i+4}]$ and thus $r_m(\rho)\in[m_{i+2}, b]$. We conclude that 
	$\rho\in [r_m(\hat{b}),r_m(b)]$ which is a contradiction with the minimality of $m$, because we found a $k$-link symmetric arc around $w$ such that 
	$[r_m(\hat{b}),r_m(b)]\ni \rho$ and $d(\rho,m)>d(\rho,w)$.
\end{proof}

\section{Arc-component $\Rr$ is fixed under homeomorphisms}

\label{sec:Rfixed}
Assume by contradiction that there exists a homeomorphism $h:\CUIL \to \CUILt$.
Our goal in this section is to prove Theorem~\ref{thm:R} (which holds
also if $s = \tilde s$).

\begin{definition}\label{def:eps-close}
We say that the maps $f:J \to \R$ and $g:K \to \R$ for intervals 
$J,K\subset [c_2,c_1]$, are {\em $\eps$-close}
if there exists a homeomorphism $h:J \to K$ such that $|f(x) - g \circ h(x)| < \eps$ for all $x \in J$, see Figure~\ref{fig:eps-close}.
\end{definition}

\begin{figure}[ht]
\unitlength=7mm
\begin{picture}(15,5)(-1,0.85)
\thicklines
\put(7,1){\line(2,3){3.2}}
\put(11.4,1){\line(-1,4){1.2}}
\put(0.7,1){\line(1,2){2}}
\put(5.3,1){\line(-1,2){2}}
\put(3,4.4){\line(-1,2){0.3}}
\put(3,4.4){\line(1,2){0.3}}
\put(11.5,4.15){\vector(0,1){1.9}}
\put(11.5,4.15){\vector(0,-1){0.1}}
\put(11.8,4.9){\small $<\eps$}
\end{picture}
\caption{Graphs of $\eps$-close maps.}\label{fig:eps-close}
\end{figure}

\begin{remark} Maps that are $\eps$-close can have different number of branches. However, in the non-recurrent case the number of branches must be the same. Note also that $\eps$-closeness is not an equivalence relation
because it is not transitive.
\end{remark}

From now on take $\eps = \eps(\delta) > 0$  (except in Lemma~\ref{lem:12} where $\eps$ is chosen independently) such that
Propositions~\ref{prop:3.6} and~\ref{prop:eps-symmetric} apply both for $\UIL$ and $\UILt$. 

Choose integers $\hat k, l, k$ so large that 
$\mesh(\chain_{\hat k}), \mesh(\tilde\chain_l), \mesh(\chain_k) < \eps$ 
and 
\begin{equation}\label{eq:refine}
h^{-1}(\tilde \chain_l) \preceq \chain_{\hat k} \quad \text{ and }
\quad h(\chain_k) \preceq \tilde \chain_l.
\end{equation}
Let $B_i := h(A_i)$; since $h(\chain_k)$ refines $\tilde \chain_l$,
$B_i$ is link-symmetric in $\tilde \chain_l$. 
We denote
the midpoint of $B_i$ by $n_i$, so $B_i = [n_{i+2},\hat n_{i+2}]$, see Figure~\ref{fig:order}. Let $q := h(\rho)$.

\begin{lemma}
	The sequence $\{B_{i}\}_{i\in \N}\subset \varprojlim([c_{2},c_{1}], T_{\tilde{s}})$ is a complete sequence of $l$-link symmetric arcs with respect to $q$.
\end{lemma}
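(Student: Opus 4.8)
The plan is to transport the completeness of $\{A_i\}$ (established in the previous Proposition) across the homeomorphism $h$ and then use it to rule out any "stray" midpoint. Since $h(\chain_k) \preceq \tilde\chain_l$ and $h^{-1}(\tilde\chain_l) \preceq \chain_{\hat k}$, the homeomorphism sends $k$-link-symmetric arcs in $\CUIL$ to $l$-link-symmetric arcs in $\CUILt$ and conversely (up to passing through the intermediate refinement $\chain_{\hat k}$); in particular $B_i = h(A_i)$ is $l$-link-symmetric with midpoint $n_i := h(m_i)$, none of the $B_i$ is contained in a single link of $\tilde\chain_l$ (because $A_i$ is not contained in a single link of $\chain_k$, as $A_i$ contains the $k$-symmetric subarc through two distinct $k$-points whenever $i \ge 2$, and $h$ preserves this non-triviality via the refinement relations), and $q = h(\rho) \in B_i$ for every $i$.

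First I would argue for the upper bound on midpoints: suppose $B \ni q$ is an $l$-link-symmetric arc not contained in a single link of $\tilde\chain_l$, with midpoint $n$. Pull it back: $h^{-1}(B) \ni \rho$ is an arc which is $\hat k$-link-symmetric with respect to $\chain_{\hat k}$ (using $h^{-1}(\tilde\chain_l) \preceq \chain_{\hat k}$), and it is not contained in a single link of $\chain_{\hat k}$. Here the subtlety is that completeness of $\{A_i\}$ was proven for the chain $\chain_k$, not for $\chain_{\hat k}$; so I would either (a) note that the proof of the previous Proposition goes through verbatim for any natural chain, in particular for $\chain_{\hat k}$, giving a complete sequence $\{A_i^{\hat k}\}$ of $\hat k$-link-symmetric arcs through $\rho$ whose midpoints are the $\pi_{\hat k+i}^{-1}(c)$, and observe that these midpoints coincide as points of $\Rr$ with the $m_i$ (the defining condition $\pi_{k+i}^{-1}(c)$ is independent of which natural chain one uses once one fixes the level), or (b) absorb $\chain_{\hat k}$ into the statement directly. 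Either way, $h^{-1}(B)$ has midpoint $h^{-1}(n)$ lying in $\{m_i\}_{i\in\N}$, hence $n = h(h^{-1}(n)) \in \{n_i\}_{i\in\N}$.

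For the lower bound — that every $n_i$ is in fact realized as the midpoint of some $l$-link-symmetric arc through $q$ not contained in a single link — this is immediate, since $B_i$ itself is such an arc. Combining, the set of midpoints of $l$-link-symmetric arcs through $q$ (not inside one link) is exactly $\{n_i\}_{i\in\N}$, which is the definition of $\{B_i\}$ being a complete sequence with respect to $q$.

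The main obstacle is the bookkeeping around the two chains: one must be careful that "link-symmetric with respect to $\tilde\chain_l$" pulls back through $h$ to "link-symmetric with respect to $\chain_{\hat k}$" and not $\chain_k$, and that the completeness Proposition, proved for $\chain_k$, is legitimately applicable at level $\hat k$. The clean way to handle this is to observe that the entire Section~\ref{sec:complete} argument used nothing specific about the index $k$ beyond $\chain_k$ being a natural chain, so it yields completeness of $\{A_i\}$ as a sequence of $\hat k$-link-symmetric arcs as well, with the same underlying points $m_i = \pi_{k+i}^{-1}(c) = \pi_{\hat k + i'}^{-1}(c)$ for the appropriate shift of index; once this is said, the transport through $h$ is routine and the stray-midpoint argument closes exactly as above.
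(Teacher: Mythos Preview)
Your approach has a genuine gap: you assume that the midpoint of $h^{-1}(B)$ is $h^{-1}(n)$. This is not justified and is generally false. The midpoint of a link-symmetric arc is a specific point (the one of highest level in the midlink arc-component), and $h^{-1}$ has no reason to carry the $l$-midpoint of $B$ to the $\hat k$-midpoint of $h^{-1}(B)$; indeed, since we only have $h^{-1}(\tilde\chain_l)\preceq\chain_{\hat k}$ (not the reverse), the $\hat k$-midlink arc-component of $h^{-1}(B)$ can be strictly larger than $h^{-1}$ of the $l$-midlink arc-component of $B$, and its highest-level point need not be $h^{-1}(n)$. Relatedly, your identification $n_i:=h(m_i)$ is not the paper's definition; $n_i$ is the midpoint of $B_i$ in the $\tilde\chain_l$-sense, which lies in the same link arc-component as $h(m_i)$ but need not coincide with it.

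What completeness at level $\hat k$ actually gives you is that the $\hat k$-midpoint $m$ of $A=h^{-1}(B)$ equals some $m_{j'}$. From this alone you cannot conclude $n\in\{n_i\}$: pushing $m_{j'}$ forward by $h$ lands in a $\hat k$-link arc-component, which may spread over several $\tilde\chain_l$-links, so there is no direct contradiction with $n\notin\{n_i\}$. The paper closes this by taking a stray $n$ \emph{closest} to $q$, locating $n\in(n_j,n_{j+2})$, and then doing a genuine case split on the position of $m$: either $m$ lies strictly between $m_j$ and $m_{j+2}$ (impossible by the structure of the $m_i$'s and completeness), or $m\in A^{m_j}$, or $m\in A^{m_{j+2}}$. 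The last two cases require separate reflection arguments (reflecting over $n_j$, respectively over $n$) that produce a new $l$-link-symmetric arc through $q$ with midpoint strictly closer to $q$, contradicting minimality. Your argument bypasses exactly these cases, and they are where the content lies.
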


\begin{proof}	
	Assume by contradiction that there exists an $l$-link-symmetric arc $B \owns q$
	with midpoint $n\in h(\Rr)$ such that $\pi_{l}(B)$ is not injective, $n\neq n_i$ for every $i\in \N$ and $B$ is not contained in a single link of the chain 
	$\tilde{\mathcal{C}}_l$. Take $B$ such that $n$ is the closest to $q$ (in arc-length) with the above properties.
Assume there exists $j\in \N$ such that $n \in (n_j,n_{j+2})$.

Recall that for a $\hat k$-point $u\in\ell\in{\chain}_{\hat k}$ we denote the arc-component of $u$ in $\ell$ by $A^u$.
		
Because we chose chains such that 
$h^{-1}(\tilde\chain_l) \preceq \chain_{\hat k}$,
the arc $A:=h^{-1}(B)$ is $\hat k$-link symmetric and $\rho\in A$. Assume that the midpoint $m$ of $A$ is not contained 
	in $A^{m_j}$ or $A^{m_{j+2}}$, thus $m\in(m_j, m_{j+2})$.
	Note that $\chain_k=h^{-1}\circ h(\chain_k)\preceq h^{-1}(\tilde{\chain}_l)\preceq \chain_{\hat{k}}$ and thus $k\geq\hat k$. 
	Since $\{A_i\}_{i\in \N}$ is a complete sequence of $k$-link symmetric arcs with respect to $\rho$, we get that $\{A_{i+ k-\hat k}\}_{i\in \N}$ is a complete sequence of 
	$\hat k$-link symmetric arcs with respect to $\rho$. Since $A$ is $\hat k$-link symmetric and $\rho\in A$, by the choice of $\eps>\textrm{mesh}\, 
	\chain_{\hat{k}}$ we obtain that $A$ is not contained in a single link of $\chain_{\hat k}$.
Thus $m=m_{i+ k-\hat k}$ for some $i\geq 1$. 
But $m=m_{i+ k-\hat k}\in(m_j, m_{j+2})$ gives a contradiction. 
	
	Assume that $m\in A^{m_j}$. Recall that $r_x$ denotes the reflection over $x$, that is, $r_x(y)$ is a point such that 
$[r_x(y), y]$ is $l$-link-symmetric with midpoint $x$
(and if we can choose $r_x(y)$ to be an $l$-point, we choose the one
with the highest $l$-level).
\newline
Since $B_j=[\hat n_{j+2}, n_{j+2}]$ is $l$-link symmetric with midpoint $n_j$ and $n\in(n_j, n_{j+2})$,  $r_{n_j}(n)$ and $n$ are contained in the same link of $\tilde\chain_l$. 
Since $h^{-1}(n)\in A^m=A^{m_j}$, and $h^{-1}(\tilde\chain_l)\preceq \chain_{\hat{k}}$ it follows that $h^{-1}(r_{n_j}(n))\in A^{m_j}$. 
We conclude that $h^{-1}([r_{n_j}(n), n])\subset A^{m_j}$.
Since $h^{-1}(q)=\rho\notin A^{m_j}$ we obtain that $r_{n_j}(n)\in(q, n_j)$.
But then $r_{n_j}(n)$ is the midpoint of 
	the $l$-link symmetric arc which contains $q$ and thus $r_{n_j}(n)=n_i$ for some $i<j$, because we assumed that $B$ is the closest $l$-link symmetric arc such that  
	$n\notin (n_i)_{i\in \N}$. 
  But $m_i=h^{-1}(r_{n_j}(n))\in A^{m_j}$ which is impossible. 

If $m\in A^{m_{j+2}}$ then
	$r_n(n_{j+2})\in(n_j, n_{j+2})$ is a midpoint of an $l$-link symmetric arc which contains $q$, a contradiction.
	
	If $n\in (q, n_1)$ or $n\in (q, n_2)$ the proof follows similarly. 
\end{proof}

Assume by contradiction that $h(\Rr) \neq \tilde \Rr$. 
Then there is $\hat l \geq l+3$
such that $q_{\hat l+1} < \tilde{c} < q_{\hat l}$, and that 
$\pi_{\hat l}:B_1 \to [\tilde c_2,\tilde c_1]$ is injective. Condition $\hat{l}\geq l+3$ is required in the proof of Lemma~\ref{lem:Qint} (Case III).

The crux of the proof is to show that $h(\Rr)$ cannot contain the
$\hat l$-pattern $12$, and therefore (by Lemma~\ref{lem:dense})
cannot be dense in $\CUILt$, which contradicts the fact that $\Rr$ is dense in $\CUIL$.

Let $B \owns q$ be the maximal arc such that $\pi_{\hat l}:B \to  [\tilde c_2,\tilde c_1]$ is injective. Then $\pi_{\hat l}(B) \subset [\tilde c_3,\tilde c_1]$.
Indeed, since $q_{\hat l+1} < \tilde{c}$, $\pi_{\hat l+1}(B) \subset [\tilde c_2, \tilde c]$.
Hence $\pi_{\hat l}(B) \subset T_{\tilde s}([\tilde c_2,\tilde c]) = [\tilde c_3,\tilde c_1]$, see Figure~\ref{fig:pi(B)}.

\begin{figure}[ht]
\unitlength=9mm
\begin{picture}(10,5)(1,-0.5)

\put(1,3.9){\line(1,0){1}}
\put(1,3.7){\oval(0.4,0.4)[l]}
\put(1,3.5){\line(1,0){10}}
\put(11,3.3){\oval(0.4,0.4)[r]}
\put(11,3.1){\line(-1,0){1}}
\put(3.5,3.5){\circle*{0.08}}
\put(3.4,3.7){\scriptsize $q$}

\put(1,2.75){\line(1,0){10}}
\put(-0.7,3){\large $\pi_{\hat l+1}$}
\put(0.5, 3.5){\vector(0, -1){0.8}}

\put(1,2.65){\line(0,1){0.4}}
\put(0.9,2.25){\scriptsize $\tilde{c}_2$}
\put(4,2.65){\line(0,1){0.2}}
\put(3.9,2.25){\scriptsize $\tilde{c}$}
\put(11,2.55){\line(0,1){0.4}}
\put(10.9,2.25){\scriptsize $\tilde{c}_1$}
\put(3,2.65){\line(0,1){0.2}}
\put(2.9,2.25){\scriptsize $\tilde{c}_3$}
\put(3.5,2.65){\line(0,1){0.2}}
\put(3.4,3){\scriptsize $q_{\hat{l}+1}$}

\put(12,2.5){\vector(0,-1){2}}
\put(12.2,1.5){\small $T_{\tilde{s}}$}

\put(3,1.7){\line(1,0){1}}
\put(3,1.5){\oval(0.4,0.4)[l]}
\put(3,1.3){\line(1,0){8}}
\put(11,1.1){\oval(0.4,0.4)[r]}
\put(11,0.9){\line(-1,0){1}}
\put(7.5,1.3){\circle*{0.08}}
\put(7.4,1.5){\scriptsize $q$}

\put(1,0.5){\line(1,0){10}}
\put(1,0.3){\line(0,1){0.4}}
\put(0.9,0){\scriptsize $\tilde{c}_2$}
\put(4,0.4){\line(0,1){0.2}}
\put(3.9,0){\scriptsize $\tilde{c}$}
\put(11,0.3){\line(0,1){0.4}}
\put(10.9,0){\scriptsize $\tilde{c}_1$}
\put(3,0.4){\line(0,1){0.2}}
\put(2.9,0){\scriptsize $\tilde{c}_3$}
\put(7.5,0.4){\line(0,1){0.2}}
\put(7.4,0){\scriptsize $q_{\hat{l}}$}

\put(-0.3,0.8){\large $\pi_{\hat{l}}$}
\put(0.5, 1.3){\vector(0, -1){0.8}}

\put(3,0.5){\vector(-1,0){0}}
\put(11,0.5){\vector(1,0){0}}
\put(1,2.75){\vector(-1,0){0}}
\put(4,2.75){\vector(1,0){0}}

\put(3,1.3){\vector(-1,0){0}}
\put(11,1.3){\vector(1,0){0}}
\put(1,3.5){\vector(-1,0){0}}
\put(4,3.5){\vector(1,0){0}}

\put(5,0.7){\scriptsize $\pi_{\hat{l}}(B)$}
\put(2,2.95){\scriptsize $\pi_{\hat{l}+1}(B)$}

\put(5.5,1.4){\scriptsize $B$}
\put(2.5,3.6){\scriptsize $B$}

\thicklines
\put(3,0.5){\line(1,0){8}}
\put(1,2.75){\line(1,0){3}}

\put(3,1.3){\line(1,0){8}}
\put(1,3.5){\line(1,0){3}}

\end{picture}
\caption{Arc $B$ in projections $\pi_{\hat{l}+1}$ and $\pi_{\hat{l}}$.}
\label{fig:pi(B)}
\end{figure}
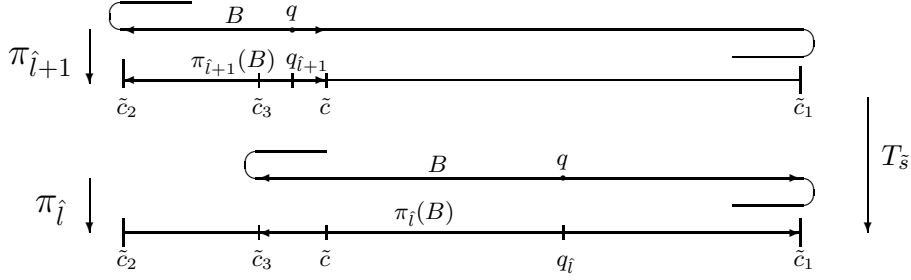

Let $Q\subset h(\Rr)$ be the closest (in the arc-length distance) arc to $q$ such that $\pi_{\hat{l}}:Q\rightarrow [\tilde{c}_2,\tilde{c}_1]$ is a bijection, 
\ie $Q$ has $\hat{l}$-pattern $12$. It follows that $q\notin Q$. 
For the rest of this section we abbreviate $T:=T_{\tilde{s}}$.
 
\begin{lemma}\label{lem:Qint}
Assume that an arc $Q \subset h(\Rr)$ has $\hat l$-pattern $12$.
If $Q\subset B_j$ for $j\in \N$ minimal, 
then $Q\subset (n_j,n_{j+2})$. 
\end{lemma}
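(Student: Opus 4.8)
The plan is to argue by contradiction: suppose $Q\not\subset(n_j,n_{j+2})$ and derive a contradiction with either the minimality of $j$ or the fact that $Q$ is the pattern-$12$ arc of $h(\Rr)$ closest to $q$. First I would record the facts I need from the $\{B_i\}$-analogues of Lemmas~\ref{l2}--\ref{lem:mid} (these are topological, hence transported by $h$) together with the cyclic order of the points $n_i,\hat n_i$ around $q$ (the analogue of Figure~\ref{fig:orderA_i}): the point $n_{j+2}$ is the endpoint of $B_j$ lying on the side of $n_j$ opposite to $q$; $q$ lies strictly between the interior point $n_j$ and the endpoint $\hat n_{j+2}$; and $q\notin Q$ (already noted, since $\pi_{\hat l}(B)\subset[\tilde c_3,\tilde c_1]$ misses $\tilde c_2$ while $\pi_{\hat l}(Q)=[\tilde c_2,\tilde c_1]$). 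As $Q$ is connected and avoids $q$, it lies in one component of $B_j\setminus\{q\}$, so exactly one of the following holds: \textbf{(I)} $Q\subset(q,\hat n_{j+2}]$; \textbf{(II)} $Q\subset(n_{j+2},q)$ and $Q$ meets $[n_j,q)$; \textbf{(III)} $n_{j+2}\in Q$. (The cases $j=1,2$ are handled with the same adjustments as in the completeness lemma for $\{B_i\}$.)

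Cases I and II I expect to be routine. In Case~I the structural facts give $(q,\hat n_{j+2}]\subset B_{j-1}$, hence $Q\subset B_{j-1}$, contradicting the minimality of $j$. In Case~II, if $Q$ avoids $n_j$ then $Q\subset(n_j,q)\subset B_{j-2}$, again contradicting minimality; if $n_j\in Q$, I would reflect $Q$ through $n_j$ inside the $l$-link-symmetric arc $B_j$ — legitimate because both endpoints of $Q$ lie within arc-length $d(n_j,\partial B_j)$ of $n_j$ — and invoke Lemma~\ref{lem:12} (valid in the non-recurrent case) to see that this reflection preserves the $\hat l$-pattern; the reflected arc then has pattern $12$ and lies strictly closer to $q$ than $Q$, contradicting the choice of $Q$.

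The main obstacle is Case~III. Here $n_{j+2}$ is simultaneously an endpoint of $B_j$ and the midpoint of the strictly larger $l$-link-symmetric arc $B_{j+2}$, so reflecting $Q$ through $n_{j+2}$ inside $B_{j+2}$ only pushes it \emph{away} from $q$, into $B_{j+2}\setminus B_j$, and the one-reflection argument of Case~II no longer applies. The plan is to examine $\pi_{\hat l}$ and $\pi_{\hat l+1}$ on $B_{j+2}$ near $n_{j+2}$: being an endpoint of the pattern-$12$ arc $Q$, the point $n_{j+2}$ is an $\hat l$-point of level $1$ or $2$, hence a turning point of $\pi_{\hat l}$ of very low level, whereas its being the midpoint of $B_{j+2}$ forces the link structure of $B_{j+2}$ around $n_{j+2}$ to be long. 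This is where the hypothesis $\hat l\ge l+3$ enters: having at least three levels of slack between $l$ and $\hat l$ is exactly what lets one chain the reflections — first through $n_{j+2}$ using the link-symmetry of $B_{j+2}$, then through the nearest midpoint $n_i$ with $i<j$ on the side toward $q$ — and transport the pattern $12$ (again via Lemma~\ref{lem:12}) either into some $B_i$ with $i<j$, contradicting minimality of $j$, or strictly closer to $q$, contradicting the choice of $Q$. The delicate part is the bookkeeping: keeping exact track of which links of $\tilde\chain_l$ contain $n_{j+2}$ and its neighbours, and of the images of the relevant sub-arcs under $\pi_{\hat l}$ and $\pi_{\hat l+1}$, throughout the composed reflection.
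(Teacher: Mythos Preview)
Your approach is genuinely different from the paper's, and it has real gaps.

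The paper never uses reflections or Lemma~\ref{lem:12} here. Instead it assumes $n_j\in\operatorname{int}(Q)$ and works entirely with projections and $\eps$-symmetry: since $\pi_{\hat l}\colon Q\to[\tilde c_2,\tilde c_1]$ is a bijection and $B_j$ is $l$-link-symmetric with midpoint $n_j$, the map $T^{\hat l-l}$ must be $\eps$-symmetric on a suitable interval around $\pi_{\hat l}(n_j)$. Five cases (governed by the distance of $\pi_{\hat l}(n_j)$ to $\tilde c$, $\tilde c_1$, $\tilde c_2$) are then dispatched with Propositions~\ref{prop:3.6}, \ref{prop:eps-symmetric} and Corollary~\ref{cor:2}. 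The hypothesis $\hat l\ge l+3$ enters only in the paper's Case~III, where one passes to the $(\hat l-1)$-pattern $312$ and applies Corollary~\ref{cor:2} to a $\delta$-neighbourhood of $\tilde c_3$. The easy reductions you call Cases~I and (part of) II --- that $Q\subset(q,\hat n_{j+2}]$ or $Q\subset(n_j,q)$ contradicts minimality --- are implicit in the paper; the substance is showing $n_j\notin\operatorname{int}(Q)$.

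Your reflection route does not close. In your Case~II with $n_j\in Q$, you assert that reflecting $Q$ through $n_j$ yields a pattern-$12$ arc \emph{strictly closer to $q$}. But if $Q=[a,b]$ with $a\in(n_{j+2},n_j)$ and $b\in(n_j,q)$, the reflection $\hat Q$ reaches link-distance $d(n_j,a)$ on the $q$-side, whereas $Q$ reaches $d(n_j,b)$; nothing forces $d(n_j,a)>d(n_j,b)$, so $\hat Q$ need not be closer. (And if $d(n_j,a)$ exceeds the link-distance from $n_j$ to $q$, the reflected arc would contain $q$ --- a different contradiction, but not the one you stated.) Your Case~III is only a plan: ``chain the reflections'' through $n_{j+2}$ and then some $n_i$ with $i<j$ is not an argument, and the role you assign to $\hat l\ge l+3$ there is speculative. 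Finally, you lean on the extremal choice of $Q$ (closest to $q$), which is contextual scaffolding before the lemma but not part of its hypotheses; the paper's proof uses only $q\notin Q$, which holds for \emph{every} arc of $\hat l$-pattern $12$.

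If you want to repair the argument along your lines you would essentially be re-proving the combination of Lemma~\ref{lem:Qint} and Theorem~\ref{thm:R} together. The cleaner fix is the paper's: forget reflections, project, and use the $\eps$-symmetry results directly to rule out $n_j\in\operatorname{int}(Q)$.
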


\begin{proof}
Let us assume by contradiction that $n_j\in \text{int}(Q)$.
Note that $\pi_{\hat{l}}(Q)=[\tilde{c}_2,\tilde{c}_1]$ and let $\delta$ be 
chosen as in \eqref{eq:delta}.
In Proposition~\ref{prop:3.6} we obtain $\eps=\eps(\delta)$ such that $\eps\in (0,\delta)$ and thus it follows that $\eps s^{-(\hat{l}-l)}<\delta$ for every $\hat{l}\geq l$.
Note that $q\notin Q$, but $q\in B_j$.
We distinguish different cases for the position of $\pi_{\hat l}(n_j)$:

\textbf{Case I.} Assume that $|\pi_{\hat{l}}(n_j)-\tilde{c}_2|, |\pi_{\hat{l}}(n_j)-\tilde{c}|, |\pi_{\hat{l}}(n_j)-\tilde{c}_1|\geq \delta$.

Let
$$
[a,b]:=
\begin{cases}
[\tilde{c}_2,\pi_{\hat{l}}(n_j)+|\tilde{c}_2-\pi_{\hat{l}}(n_j)|], & \text{ if } |\tilde{c}_2-\pi_{\hat{l}}(n_j)|\leq|\tilde{c}_1-\pi_{\hat{l}}(n_j)|,\\ 
[\pi_{\hat{l}}(n_j)-|\tilde{c}_1-\pi_{\hat{l}}(n_j)|,\tilde{c}_1], & \text{ if } |\tilde{c}_2-\pi_{\hat{l}}(n_j)|>|\tilde{c}_1-\pi_{\hat{l}}(n_j)|,
\end{cases}
$$

and note that there exists an arc $Q'\subset Q$ such that $\pi_{\hat{l}}(Q')=[a,b]$.

First assume that $\tilde{c}\in [a,b]$ and $|a-\tilde{c}|,|b-\tilde{c}|\geq \delta$. Since we also assumed that $|\pi_{\hat l}(n_j)-\tilde c|\geq\delta$,
we can use Proposition~\ref{prop:eps-symmetric} for the interval $[a,b]$ to obtain that $T^{\hat{l}-l}|_{[a,b]}$ is not $\eps$-symmetric around $\pi_{\hat{l}}(n_j)$. But this contradicts that $B_j$ is $l$-link symmetric.

Now assume that either $|b-\tilde{c}|<\delta$ and $\tilde{c}\in [a,b]$ or $\tilde{c}\notin [a,b]$ and $a=\tilde{c}_2$. Let us study $T^{-2}([a,b])$.
Because we restrict $T$ to $[\tilde{c}_2,\tilde{c}_1]$, it follows that $T^{-2}(a)=T^{-2}(\tilde{c}_2)=\tilde{c}$. 
\newline
Assume first that  $|\pi_{\hat{l}+2}(n_j)-\tilde{c}|\geq\delta$. Let us set $\eta:=|\tilde{c}-\pi_{\hat{l}+2}(n_j)|+\delta$. Observe that there exists an arc $Q''\subset Q$ 
such that $\pi_{\hat l+2}(Q'')=[\pi_{\hat{l}+2}(n_j)-\eta, \pi_{\hat{l}+2}(n_j)+\eta]$. For the interval $\pi_{\hat l+2}(Q'')$ we can again apply
Proposition~\ref{prop:eps-symmetric} and obtain a contradiction.
\newline
If $|\pi_{\hat{l}+2}(n_j)-\tilde{c}|<\delta$ we proceed as in Case II or Case III.

If either $|a-\tilde{c}|<\delta$ and $\tilde{c}\in [a,b]$ or $\tilde{c}\notin [a,b]$ and $b=\tilde{c}_{1}$ we study $T^{-1}([a,b])$ and proceed analogously as in the preceding paragraph.

\textbf{Case II.} Let $\eps s^{-(\hat{l}-l)}<|\pi_{\hat{l}}(n_j)-\tilde{c}|<\delta$.

Let $u\in Q$ be such that $\pi_{\hat{l}}(u)=\tilde{c}$. Note that both $u$  and $n_j$ are centres of $l$-link symmetry. 
Denote by $x_0:=\tilde{c}$ and by $x_{1}:=\pi_{\hat{l}}(n_j)$. 
First set $x_{-1}\in [\tilde{c}_2,\tilde{c}_1]$ to be a reflection of $x_{1}$ over $x_{0}$, \ie $x_{-1}:=R_{x_0}(x_1)$. Continue inductively with $x_{k+1}:=R_{x_k}(x_{k-1})$ and $x_{-(k+1)}:=R_{x_{-k}}(x_{-(k-1)})$ for every $1\leq k \leq N-1$ where every $x_{k}\in [\tilde{c}_2,\tilde{c}_1]$ and $N\in \N$ is the smallest number such that $|x_{0}-x_{N}|>4\delta$.
Then it also follows that $|x_{0}-x_{-N}|>4\delta$. Because we reflect $\pi_{\hat{l}}$-projections of centres of $l$-link symmetry over the $\pi_{\hat{l}}$-projections of centres of $l$-link symmetry we obtain new $\pi_{\hat{l}}$-projections of centres of $l$-link symmetry. Thus we can find natural numbers $M,M',M''<N$ such that $|x_M-\tilde{c}|,|x_{-M'}-\tilde{c}|\geq \delta$ and $|x_{-M'}-x_{M}|=|x_{M''}-x_{M}|$. The interval $[x_{-M'},x_{M''}]$ is symmetric around $x_{M}$ and satisfies conditions from Proposition~\ref{prop:eps-symmetric} so we again obtain a contradiction with $B_j$ being an $l$-link symmetric arc.

\textbf{Case III.} Let $|\pi_{\hat{l}}(n_j)-\tilde{c}|\leq \eps s^{-(\hat{l}-l)}$. 

First we see that $Q$ has $\hat{l}-1$ pattern $312$, \ie $T|_Q$
maps in two branches on intervals $[\tilde{c}_3,\tilde{c}_1]$ and $[\tilde{c}_2,\tilde{c}_1]$. 
Let $\zeta:=\text{min}\{|\tilde{c}_2-\tilde{c}_3|,|\tilde{c}_3-\tilde{c}_1| \}$ and $J:=[\tilde{c}_3-\zeta, \tilde{c}_3+\zeta]\subset [\tilde{c}_2,\tilde{c}_1]$, as in Figure~\ref{fig:312}.

\begin{figure}[ht]
\unitlength=9mm
\begin{picture}(10,4)(1.5,-1)

\put(10,2.5){\line(-1,0){5}}
\put(13,2){\line(-1,0){8}} \put(5, 2.25){\oval(0.5,0.5)[l]}
\put(1,1.5){\line(1,0){12}} \put(13, 1.75){\oval(0.5,0.5)[r]}
\put(8,1){\line(-1,0){7}} \put(1, 1.25){\oval(0.5,0.5)[l]}

\put(1,0){\line(1,0){12}}
\put(1,-0,25){\line(0,1){0,5}}
\put(13,-0,25){\line(0,1){0,5}}
\put(5,-0,2){\line(0,1){0,4}}
\put(9,-0,2){\line(0,1){0,4}}

\put(-0.8,0.5){\large $\pi_{\hat{l}-1}$}
\put(0.5, 1){\vector(0, -1){1}}
\put(1,-0,5){\small $\tilde{c}_2=\tilde{c}_3-\zeta$}
\put(5,-0,5){\small $\tilde{c}_3$}
\put(13,-0,5){\small $\tilde{c}_1$}
\put(9,-0,5){\small $\tilde{c}_3+\zeta$}
\put(7,0.25){\large $J$}
\linethickness{0.5mm}
\put(1,0){\line(1,0){8}}

\end{picture}
\caption{Interval $J$ as in Case III.}
\label{fig:312}
\end{figure}
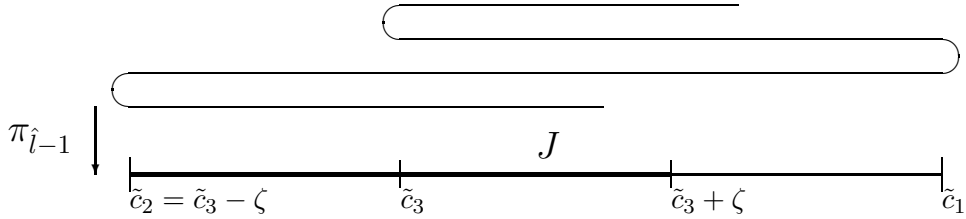

Because $|\tilde{c}_2-\tilde{c}_3|>\delta$ we can use
Corollary~\ref{cor:2} for interval $J$ and obtain that 
$T^{\hat{l}-l-1}|_{J}$ is not $\eps$-symmetric and this again contradicts
that $B_j$ is $l$-link symmetric.

\textbf{Case IV.} Let $\eps s^{-(\hat{l}-l)}<|\pi_{\hat{l}}(n_j)-\tilde{c}_2|<\delta$ (the case $\eps s^{-(\hat{l}-l)}<|\pi_{\hat{l}}(n_j)-\tilde{c}_1|<\delta$ goes similarly).

We obtain that $\eps s^{-(\hat{l}-l+2)}<|\pi_{\hat{l}+2}(n_j)-\tilde{c}| <\delta s^{-2}<\delta$ and so we proceed as in Case II.

\textbf{Case V.} Let $ |\pi_{\hat{l}}(n_j)-\tilde{c}_2|\leq\eps s^{-(\hat{l}-l)}$ (the case $|\pi_{\hat{l}}(n_j)-\tilde{c}_1|\leq \eps s^{-(\hat{l}-l)}$ goes similarly).

We obtain that $|\pi_{\hat{l}+2}(n_j)-\tilde{c}|\leq \eps s^{-(\hat{l}-l+2)} $ and proceed as in Case III.
\end{proof}

The following lemma strengthens Proposition~\ref{prop:eps-symmetric},
in the sense that given $\hat l > l$ and an arc $Q \subset \Rr$
with $\hat l$-pattern $12$, Lemma~\ref{lem:12} implies that
if an arc $S\subset \Rr$
has the same $l$-pattern (or reverse $l$-patterns) as $Q$,
then $S$ itself must have $\hat l$-pattern $12$.

\begin{lemma}\label{lem:12}
Assume that $\tilde{c}$ is not recurrent.
Then there is $\eps > 0$ such that, whenever
$T^i|_{[\tilde{c}_2, \tilde{c}_1]}$ and $T^j|_{[a,b]}$ are $\eps$-close
for some interval $[a,b]\subset[\tilde{c}_2,\tilde{c}_1]$, there
is $k \ge 0$ and a closed interval
$J:=[a',b']\subset[\tilde{c}_2,\tilde{c}_1]$ such that
$|a'-a|,|b'-b|<\eps$
so that $T^k$ maps $J$ homeomorphically onto $[\tilde{c}_2, \tilde{c}_1]$.
\end{lemma}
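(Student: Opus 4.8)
The plan is to push everything through the long-branchedness that non-recurrence of $\tilde c$ forces. Fix $\delta_0>0$ with: $|T_{\tilde s}^n(x)-T_{\tilde s}^n(y)|>\delta_0$ for every $n$ and every pair of adjacent turning points $x,y$ of $T_{\tilde s}^n$; $\delta_0<\min_{m\ge 1}|\tilde c-\tilde c_m|$ (positive since $\tilde c$ is not preperiodic); $\delta_0<|\tilde c_2-\tilde c_3|$; and $\delta_0$ less than a tenth of $|\tilde c_1-\tilde c_2|$. Then take $\eps\in(0,\delta_0/10)$ and write $T:=T_{\tilde s}$. I would record three observations. (i) $T|_{[\tilde c_2,\tilde c_1]}$ is onto ($\max=\tilde c_1=T(\tilde c)$, $\min=\tilde c_2=T(\tilde c_1)$), hence so is each $T^i|_{[\tilde c_2,\tilde c_1]}$, and this map has a \emph{full lap}: a closed subinterval that $T^i$ maps homeomorphically onto $[\tilde c_2,\tilde c_1]$. (Induction on $i$: from a full lap $L'$ of $T^{i-1}$, the point $w\in\mathrm{int}\,L'$ with $T^{i-1}(w)=\tilde c$ splits $L'$ into two subintervals, one of which is a full lap of $T^i$.) (ii) A map that is $\eps$-close to a monotone map with image of length $\ge\delta_0$ is itself monotone: an interior turning point would centre an oscillation of amplitude $>\delta_0$ (the adjacent lap joining two turning values at mutual distance $>\delta_0$ by long-branchedness), impossible within $2\eps<\delta_0$ of a monotone reference. (iii) If $[\alpha_0,\beta_0]\subset[\tilde c_2,\tilde c_1]$ is within Hausdorff distance $\eps$ of $[\tilde c_2,\tilde c_1]$, then $\tilde c\in\mathrm{int}[\alpha_0,\beta_0]$ and $T^2([\alpha_0,\beta_0])=[\tilde c_2,\tilde c_1]$ exactly, by a direct computation using $\eps<\delta_0<\min\{|\tilde c-\tilde c_1|,|\tilde c-\tilde c_2|,|\tilde c_2-\tilde c_3|\}$.

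Next, let $h:[\tilde c_2,\tilde c_1]\to[a,b]$ witness the $\eps$-closeness. Since $T^j\circ h$ is $\eps$-close to the surjection $T^i|_{[\tilde c_2,\tilde c_1]}$, the interval $T^j([a,b])$ lies within Hausdorff distance $\eps$ of $[\tilde c_2,\tilde c_1]$; by (iii) it contains $\tilde c$ in its interior, so the forward orbit of $[a,b]$ must fold. Let $n_0\ge 0$ be least with $\tilde c\in\mathrm{int}\,T^{n_0}([a,b])$; then $n_0\le j$ and $T^m|_{[a,b]}$ is a homeomorphism for every $m\le n_0$. Also, applying (ii) to $T^j|_{h(L)}$ (which is $\eps$-close to the monotone full lap $T^i|_L$) shows $T^j$ is monotone on $h(L)$ and carries it onto an interval within Hausdorff distance $\eps$ of the core. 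This gives the structural picture; the remaining and hardest task is to locate $k\ge 0$ and an interval $J=[a',b']$ with $|a'-a|,|b'-b|<\eps$ and $T^k(J)=[\tilde c_2,\tilde c_1]$ homeomorphically.

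For that final step my plan is to take $k$ minimal with $T^k([a,b])=[\tilde c_2,\tilde c_1]$ (finite, in fact $k\le j+2$, by (iii) applied to $T^j([a,b])$), and then construct $J$ by pulling back a monotone lap covering $[\tilde c_2,\tilde c_1]$ one iterate at a time: at each of the $k$ steps long-branchedness rules out the turning points that would destroy monotonicity, and the fact that every monotone branch of $T^m$ is affine with slope of absolute value $\tilde s^m$ — so has a contracting inverse — is used to keep the successive preimages from drifting, so that $J$ ends up within $\eps$ of $[a,b]$. Carrying out this bookkeeping, that is, fixing $k$ and proving that $J$ does not escape the $\eps$-neighbourhood of $[a,b]$ (the delicate point being that the $\eps$-neighbourhood of an image interval need not lie in the image of the monotone branch one is inverting), is the main obstacle; I expect it to require a case analysis according to the position of the relevant turning values and of $\tilde c$ at the scales $\eps$ and $\delta_0$, entirely parallel to the proof of Lemma~\ref{lem:Qint}, and it is exactly here that non-recurrence of $\tilde c$ (equivalently long-branchedness together with $\min_m|\tilde c-\tilde c_m|>0$) is indispensable — without it the statement is false.
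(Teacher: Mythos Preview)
Your proposal is incomplete: the three preliminary observations are fine, but the actual construction of $J$ is left as a plan (``Carrying out this bookkeeping\ldots\ is the main obstacle''), and that plan is both harder than necessary and not obviously salvageable as written. The difficulty is that you take $k$ to be the minimal index with $T^k([a,b])=[\tilde c_2,\tilde c_1]$ and then try to pull a full lap back through $k$ iterates while keeping its endpoints within $\eps$ of $a,b$. Nothing in your setup ties this $k$ to the $\eps$-closeness hypothesis, so when $k$ is large there are many full laps of $T^k$ inside $[a,b]$ and you have no mechanism to select one whose base interval is $\eps$-close to $[a,b]$ itself; the contracting inverses of affine branches help with small perturbations, but not with choosing among combinatorially distinct laps.

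The paper's proof avoids this entirely by taking $k=j-i$ (after disposing of $i\ge j$, which is easy). The point is that $T^j=T^i\circ T^{k}$, so the $\eps$-closeness of $T^i|_{[\tilde c_2,\tilde c_1]}$ and $T^j|_{[a,b]}$ is a statement comparing $T^i$ with $T^i\circ T^{k}$ through the witnessing homeomorphism $h$. If $T^{k}|_{[a,b]}$ had an interior turning point $t'$ with $T^{k}(t')=\tilde c_r$, then $T^j$ would be $\eps$-symmetric on a neighbourhood of $t'$, and pushing this through $h$ forces $T^i$ to be $\eps$-symmetric on a $\delta$-neighbourhood of $\tilde c_r$ --- which Corollary~\ref{cor:2} forbids. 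Thus $T^{k}$ is monotone on $[a,b]$ (up to the endpoint adjustment), and the image is automatically $\eps\tilde s^{-i}$-close to the core, so a tiny correction of $[a,b]$ gives $J$. The $\eps$-closeness hypothesis is used \emph{structurally}, through the factorisation $T^j=T^i\circ T^{j-i}$ and the symmetry machinery of Proposition~\ref{prop:eps-symmetric} and Corollary~\ref{cor:2}, rather than merely as a Hausdorff estimate on $T^j([a,b])$; this is the idea your approach is missing.
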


\begin{remark}
The closed interval $J$ addresses the technicality that if e.g.\ $i=j=0$
and $a = \tilde{c}_2+\eps/2$, $b = \tilde{c}_1-\eps/2$,
then $T^i|_{[\tilde{c}_2, \tilde{c}_1]}$ and $T^j|_{[a,b]}$ are $\eps$-close,
but without the adjustment of $J = [\tilde{c}_2, \tilde{c}_1]$, the lemma
would fail.
\end{remark}

\begin{proof}
Take $\delta$ as in \eqref{eq:delta} and $\eps < \delta/10$.
Assume that $T^i|_{[\tilde{c}_2, \tilde{c}_1]}$ and $T^j|_{[a,b]}$ are
$\eps$-close,
with homeomorphism $h:J \to [\tilde{c}_2,\tilde{c}_1]$ as in
Definition~\ref{def:eps-close}.

If $i > j$, then $T^i|_{[\tilde{c}_2,\tilde{c}_1]}$ has more branches than
$T^j|_{[a,b]}$,
so they cannot be $\eps$-close.
If $i = j$, then there is nothing to prove.
Therefore we can assume that $i < j$ and take $k = j-i$.

Suppose that $T^k|_{[a',b']}$ is homeomorphism onto a subinterval of $[\tilde{c}_2,\tilde{c}_1]$.
If $T^k([a', b']) \supset [\tilde c_2+\eps \tilde s^{-i}, \tilde c_1-\eps \tilde s^{-i}]$, then (since by non-recurrence 
$\tilde c_r$ cannot be $\eps \tilde s^{-i}$-close to $\tilde c_2$ or $\tilde c_1$ for every $r>2$),
we can adjust $[a', b']$ so that
$T^k([a', b']) \supset [\tilde c_2, \tilde c_1]$.
In this case, the lemma is proved.
If on the other hand  $T^k([a', b']) \not\supset [\tilde c_2+\eps \tilde s^{-i}, \tilde c_1-\eps \tilde s^{-i}]$  for every $a',b'$ satisfying assumptions of the lemma, then $T^j|_{[a,b]}$ cannot be $\eps$-close to $T^i|_{[\tilde c_2, \tilde c_1]}$. 

Since $T^k([a', b']) \subset [\tilde c_2, \tilde c_1]$, there
is $t \in J$ such that $x := h(t) = T^k(t)$; let $U \owns t$ be
the maximal closed interval in $[a', b']$ such that $T^k|_U$ is monotone. 

Now suppose that $T^k|_{[a',b']}$ is not a homeomorphism, and
take $t' \in \partial U \setminus \{ a',b'\}$ closest to $t$, so that $T^k(t') = \tilde{c}_r$ for some $r \ge 1$. Let  $U'$ be
the maximal neighbourhood of $t'$ such that $T^k(U')$
is contained in a $\delta$-neighbourhood $V$
of $\tilde{c}_r$. It follows that $T^j|_{U'}$ is $\eps$-symmetric (see
Figure~\ref{fig:lem12}).

\begin{figure}[ht]
\unitlength=9mm
\begin{picture}(10,4)(2,-1)
\put(10,2.7){\line(-1,0){3}}
\put(5,2.2){\line(1,0){5}} \put(10, 2.45){\oval(0.5,0.5)[r]}

\put(5.5,0.25){\vector(0,1){1.7}}\put(4.9,0.7){$T^k$}
\put(6,0.25){\vector(0,1){1}}\put(6.2,0.6){$h$}

\put(1,1.5){\line(1,0){12}}
\put(1,1.375){\line(0,1){0,25}}
\put(13,1.375){\line(0,1){0,25}}
\put(9,2.1){\line(0,1){0,2}}
\put(5.7,-0.1){$[$} \put(5.6,-0.6){$a'$}
\put(12.2,-0.1){$]$} \put(12.1,-0.6){$b'$}
\put(9.5,-0.1){$($} \put(10.5,-0.1){$)$} \put(9.85,0.15){\small $U'$}
\put(9.5,1.4){$($} \put(10.5,1.4){$)$} \put(9.85,1.75){\small $V$}

\put(1,0){\line(1,0){12}}
\put(5.8,0.03){\line(1,0){4.2}}
\put(1,-0,125){\line(0,1){0,25}}
\put(13,-0,125){\line(0,1){0,25}}
\put(9,-0,1){\line(0,1){0,2}}
\put(10,1.4){\line(0,1){0,2}}
\put(9,1.4){\line(0,1){0,2}}
\put(10,-0.1){\line(0,1){0,2}}

\put(0,0){\large $\pi_{j}$}
\put(0,1.5){\large $\pi_{i}$}
\put(1,-0,5){\small $\tilde{c}_2$}
\put(13,-0,5){\small $\tilde{c}_1$}
\put(1,1){\small $\tilde{c}_2$}
\put(13,1){\small $\tilde{c}_1$}
\put(10,-0.5){\small $t'$}
\put(7.25,0.25){\small $U$}
\put(9,-0.5){\small $t$}
\put(10,1){\small $\tilde{c}_r$}
\put(7.9,1){\small $x=h(t)$}
\put(7.7,1.7){\small $x=T^k(t)$}
\end{picture}
\caption{Step in the proof of Lemma~\ref{lem:12}.}
\label{fig:lem12}
\end{figure}
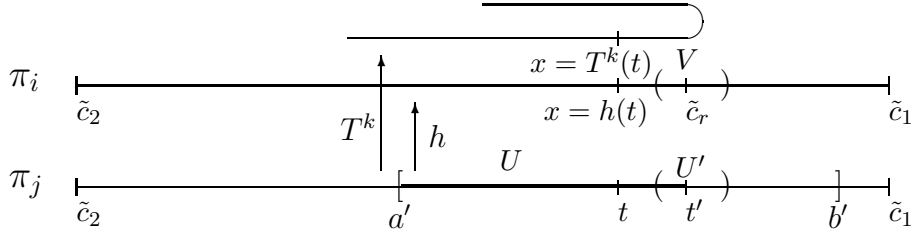

If $h|_U$ and $T^k|_U$ have the same orientation, then, by $\eps$-closeness,
$|T^i(h(u))- T^j(u)| < \eps$ for all $u \in U$, which means that
$|T^k(u)-h(u)| \le \eps \tilde s^{-i}$ for all $u \in U$.
However, $T^i|_V$ is not $\eps$-symmetric due to Corollary~\ref{cor:2},
and therefore the $\eps$-closeness is violated on the
neighbourhood $U'$.

On the other hand, if $h|_U$ and $T^k|_U$ have opposite orientation, then
$T^i$ is $\eps$-symmetric on a neighbourhood of $x$,
with $\tilde{c}_r$ in its closure.
Let $V'$ be the mirror image of $V$ when reflected in $x$.
Then by the $\eps$-symmetry of $T^i$ around $x$ and around $\tilde{c}_r$,
$T^i$ has to be $\eps$-symmetric on $V'$ as well.
But this contradicts Corollary~\ref{cor:2} again, completing the proof.
\end{proof}

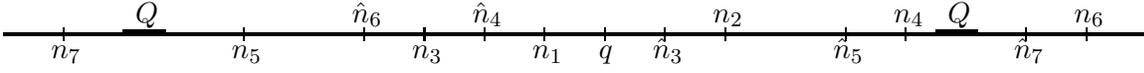
\begin{figure}[ht]
\unitlength=8mm
\begin{picture}(15,2)(1,0)
\thicklines
\put(-1,1){\line(1,0){19}}
\put(1,1.06){\line(1,0){0.7}}\put(1,1.03){\line(1,0){0.7}}\put(1.2,
1.2){\small $Q$}
\put(14.5,1.06){\line(1,0){0.7}}\put(14.5,1.03){\line(1,0){0.7}}\put(14.7,
1.2){\small $\hat{Q}$}
\thinlines
\put(9,0.9){\line(0,1){0.2}}\put(8.9, 0.6){\small $q$}
\put(8,0.9){\line(0,1){0.2}}\put(7.8, 0.6){\small $n_1$}
\put(6,0.9){\line(0,1){0.2}}\put(5.8, 0.6){\small $n_3$}
\put(10,0.9){\line(0,1){0.2}}\put(9.8, 0.6){\small $\hat n_3$}
\put(3,0.9){\line(0,1){0.2}}\put(2.8, 0.6){\small $n_5$}
\put(13,0.9){\line(0,1){0.2}}\put(12.8, 0.6){\small $\hat n_5$}
\put(0,0.9){\line(0,1){0.2}}\put(-0.2, 0.6){\small $n_7$}
\put(16,0.9){\line(0,1){0.2}}\put(15.8, 0.6){\small $\hat n_7$}
\put(11,0.9){\line(0,1){0.2}}\put(10.8, 1.2){\small $n_2$}
\put(7,0.9){\line(0,1){0.2}}\put(6.8, 1.2){\small $\hat n_4$}
\put(14,0.9){\line(0,1){0.2}}\put(13.8, 1.2){\small $n_4$}
\put(5,0.9){\line(0,1){0.2}}\put(4.8, 1.2){\small $\hat n_6$}
\put(17,0.9){\line(0,1){0.2}}\put(16.8, 1.2){\small $n_6$}
\end{picture}
\caption{The midpoints and endpoints of the arcs $B_i$.} \label{fig:order}
\end{figure}

In the proof of Theorem~\ref{thm:R} we take $\eps>0$ small enough such
that both Proposition~\ref{prop:eps-symmetric} and Lemma~\ref{lem:12}
apply.

\begin{proof}[Proof of Theorem~\ref{thm:R}.]
Let $Q \subset h(\Rr)$ be an arc in $h(\Rr)$ with $\hat l$-pattern
$12$.  As we already observed, $q\notin Q$.
Assume without loss of generality that $Q$ is the closest to $q$,
in the sense that $(r_{q}(Q), Q)$ contains no
other arc with $\hat l$-pattern $12$, where $r_{q}(Q)$ denotes the
reflection of the arc $Q$ over point $q$.

 Let $P$ be the $l$-pattern of $Q$; it is the $T^{\hat l - l}$-image of
the $\hat{l}$-pattern $12$.

Now let $j$ be minimal such that $B_j\cap\text{int}(Q)\neq \emptyset$.
Then $Q \subset (n_j,n_{j+2})$ by Lemma~\ref{lem:Qint}.
Since $B_j$ is $l$-link-symmetric around $n_j$, we can reflect $Q$ in $n_j$,
obtaining an arc $\hat Q\subset \mathfrak{R}$ with $l$-pattern $P$ (see
Figure~\ref{fig:order}). Lemma~\ref{lem:12} shows that this is impossible,
unless $\hat Q$ itself has an $\hat l$-pattern $12$,
contradicting the choice of $Q$. Thus there exists no arc $Q\subset h(\Rr)$
with $l$-pattern $P$ which contradicts Lemma~\ref{lem:dense}.
\end{proof}

\section{The Core Ingram Conjecture}\label{sec:CIC}

Recall that $p\in \CUIL$ is called a \emph{$k$-point} if there exists $n>0$ such that
$\pi_{k+n}(p)=c$, and we write $L_k(p) = n$.
Note that if $c$  is non-periodic, $k$-level $n$ is unique.

\begin{definition}
Let $\Uu\subset \CUIL$ be an arc-component and $x\in\Uu$. We say that a $k$-point $p\in\Uu$ such that $L_{k}(p)=n$ is a \emph{salient $k$-point with respect to $x$} if
$\pi_{k+n}|_{[x, p]}$ is injective.
\end{definition}

\begin{remark}
The above definition says that a salient point $p$ is
a $k$-point of level $n$ and that there are no $k$-points between $x$ and $p$ with greater $k$-level than $n$. In this sense,
it corresponds to the previous definition of salient point (for example in \cite{BBS}). Note that we will work with salient $k$-points with respect to $\rho, \tilde{\rho}$ or $q$
but because it is clear with respect to which point we work we refer to them 
only as salient $k$(or $l$)-points.
\end{remark}

\begin{lemma}
For any $i\in \N$, the midpoint $m_i$ of $A_i\subset \Rr$ is
a salient $k$-point with respect to $\rho$ and its $k$-level is $i$.
\end{lemma}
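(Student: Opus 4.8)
The plan is to unwind the relevant definitions; the statement then follows almost immediately from the construction of the arcs $A_i$. First I would note that $\pi_{k+i}(m_i)=c$ holds by the very definition of $m_i$, so $m_i$ is a $k$-point; since the critical orbit is infinite, the index with this property is unique (Definition~\ref{def:k-point}), and it equals $i$, so $L_k(m_i)=i$. If one wants to see directly that no smaller index works: for $1\le n<i$ we have $\pi_{k+n}=T_s^{\,i-n}\circ\pi_{k+i}$, hence $\pi_{k+n}(m_i)=T_s^{\,i-n}(c)=c_{i-n}$, and this differs from $c$ because $1\le i-n\le i-1$ and the critical orbit is infinite.

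It remains to verify the salience condition, namely that $\pi_{k+i}|_{[\rho,m_i]}$ is injective. Both $\rho$ and $m_i$ lie on the arc $A_i$, so the subarc $[\rho,m_i]$ is contained in $A_i$; and, as established in the proof that $A_i$ is $k$-symmetric, $\pi_{k+i}$ maps $A_i$ injectively onto $[c_2,\hat{c}_2]$. Restricting this injectivity to $[\rho,m_i]$ gives exactly what is needed, and since $i=L_k(m_i)$ this is precisely the assertion that $m_i$ is a salient $k$-point with respect to $\rho$, with $k$-level $i$.

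There is no genuinely hard step in this argument: the only point deserving a moment's care is ruling out that $L_k(m_i)$ is strictly smaller than $i$, and that is immediate from the non-periodicity of the critical point as indicated above. In short, the lemma merely re-expresses the definitions of $A_i$ and $m_i$ in the language of salient points.
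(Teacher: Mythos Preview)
Your proof is correct and follows essentially the same route as the paper's own proof, which simply invokes the definition of $A_i$ to note that $\rho\in A_i$, $\pi_{k+i}(m_i)=c$, and $\pi_{k+i}|_{A_i}$ is injective. Your version is just a slightly more detailed unwinding of the same observations, including the explicit (and correct) verification that $L_k(m_i)$ cannot be smaller than $i$.
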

\begin{proof}
By the definition of $A_i$, we obtain that $\rho\in A_i$, $\pi_{k+i}(m_i)=c$ and $\pi_{k+i}|_{A_i}$ is injective in $[c_2,c_1]$. This proves the claim.
\end{proof}

We consider $l$-link symmetric arcs
$B_i=h(A_i)\subset\tilde{\Rr}$, where
the chains $\chain_k$ and $\tilde{\chain_{l}}$ satisfy \eqref{eq:refine}.
Let $\tilde{A_i}\subset \tilde{\Rr}\subset \CUILt$ be the arc-component of $\pi^{-1}_{l+i}([\tilde{c}_2, \hat{\tilde{c}}_2])$ containing $\tilde\rho$ for every $i\in \N$.
Arcs $A_i$ are all $l$-symmetric with salient $l$-points $\tilde{m}_i$ of level $i$ and they form a complete sequence with respect to $\tilde\rho=(\ldots,\tilde{r},\tilde{r})$.

Let $n_i$ and $\tilde{m}_i$ be the midpoints of the arcs $B_i$ and $\tilde{A}_i$ respectively. In the next two lemmas we show how $B_i$ and $\tilde{A}_i$ relate to each other.

\begin{lemma}\label{lem:ms}
There exists $N\in \N$ such that for every $j\geq N$ there exists $j'\in \N$ such that
$\tilde\rho\in B_j$, $q=h(\rho)\in\tilde{A}_{j'}$ and $n_j=\tilde{m}_{j'}\notin[q,\tilde\rho]$, where $[q,\tilde\rho]$ denotes the shortest (in arc-length) arc in $\tilde{\Rr}$ containing $q$ and
$\tilde\rho$.
\end{lemma}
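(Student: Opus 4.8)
\textbf{Proof strategy for Lemma~\ref{lem:ms}.}
The plan is to use the two completeness statements already in hand, namely that $\{A_i\}_{i\in\N}$ is complete with respect to $\rho$, that $\{B_i\}_{i\in\N}=\{h(A_i)\}_{i\in\N}$ is complete with respect to $q=h(\rho)$, and that $\{\tilde A_i\}_{i\in\N}$ is complete with respect to $\tilde\rho$. By Theorem~\ref{thm:R} we know $h(\Rr)=\tilde\Rr$, so all three families live in $\tilde\Rr$ and their midpoints are all salient $l$-points with respect to either $q$ or $\tilde\rho$. The first step is to compare the two nested sequences $\{B_j\}$ and $\{\tilde A_j\}$ in $\tilde\Rr$: since $B_j$ is $l$-link-symmetric with midpoint $n_j$ and $q\in B_j$, and $\{\tilde A_{j'}\}$ is complete with respect to $\tilde\rho$, I would first argue that the tails of these sequences exhaust $\tilde\Rr$ in arc-length (Lemma~\ref{l2} and Lemma~\ref{lem:kappa} type nesting for $\{\tilde A_{j'}\}$, plus the analogous nesting of $\{B_j\}$ transported by $h$ from $\{A_i\}$). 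Hence for $j$ large enough, both $q$ and $\tilde\rho$ lie in $B_j$ and also in some $\tilde A_{j'}$ with $j'\to\infty$ as $j\to\infty$.

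Next I would identify $n_j$ as a midpoint of the $\tilde\rho$-family. The midpoint $n_j$ is a salient $l$-point with respect to $q$ of level $j$; I want to see it is also a salient $l$-point with respect to $\tilde\rho$. The key is that once $j$ is large enough that $q,\tilde\rho$ both lie in $B_j$ and in the interior of a long arc of $\tilde\Rr$ on which there are no $l$-points of level exceeding $j$ outside a controlled region, being salient-from-$q$ at level $j$ forces being salient-from-$\tilde\rho$ at the same level $j$, because the subarc $[q,\tilde\rho]$ is short compared to $[q,n_j]$ and contains no $l$-points of level $\ge j$. (This is exactly the sort of estimate used in Lemma~\ref{lem:Qint} and in the proof of completeness: short central arcs around $\rho$ contain no high-level $k$-points.) Then, by completeness of $\{\tilde A_{j'}\}$ with respect to $\tilde\rho$, we get $n_j=\tilde m_{j'}$ for some $j'$, and matching $l$-levels gives $j'$ in terms of $j$ (in fact $j'=j$ up to the bounded shift coming from $\kappa$ and the chain refinement, but we only need existence of some $j'\in\N$).

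Finally, the location statement $n_j=\tilde m_{j'}\notin[q,\tilde\rho]$ follows from the structural lemmas on the $\Rr$-family applied on the $\tilde\Rr$ side: by Lemma~\ref{lem:rho} (its $\tilde\Rr$-analogue) $\tilde\rho\notin[\tilde m_{j'},\tilde m_{j'+2}]$ and $\tilde\rho\in[\tilde m_{j'},\tilde m_{j'+1}]$, which pins $\tilde m_{j'}$ outside the short arc $[q,\tilde\rho]$ once $q$ is forced to the same side (using that $q\in\tilde A_{j'}$ and $q\in B_j$, hence $q$ lies strictly between the relevant midpoints). The main obstacle I expect is the bookkeeping in the middle step: making precise the threshold $N$ so that the short arc $[q,\tilde\rho]$ is genuinely negligible compared to the nested arcs $B_j$ and $\tilde A_{j'}$, and ruling out the degenerate possibility that $n_j$ falls inside $[q,\tilde\rho]$ or that the level bookkeeping between the $q$-salient and $\tilde\rho$-salient points fails for small $j$ — which is exactly why the statement is only asserted for $j\ge N$ rather than all $j$.
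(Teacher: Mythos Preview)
Your strategy is considerably more elaborate than what the paper does, and a couple of your intermediate claims are not correct as stated.

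The paper's proof is essentially two observations. First, from $A_i\subset A_{i+2}$ and $\bigcup_i A_i=\Rr$ (Lemma~\ref{l2}) one gets, after applying $h$, that $B_i\subset B_{i+2}$ and $\bigcup_i B_i=\tilde\Rr$; hence there is $N$ with $[q,\tilde\rho]\subset B_j$ for all $j\ge N$. Second, Lemma~\ref{lem:mid} gives $m_{i+2}\in\partial A_i$, so $n_{i+2}\in\partial B_i$; thus for $j\ge N+2$ the midpoint $n_j$ lies on $\partial B_{j-2}\supset[q,\tilde\rho]$, and in particular $n_j\notin[q,\tilde\rho]$. The statements for the $\tilde A_{j'}$ are obtained in exactly the same way. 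Note that the paper's proof does \emph{not} argue the identification $n_j=\tilde m_{j'}$ here at all; that is done in the next lemma via completeness of $\{\tilde A_i\}$, once $\tilde\rho\in B_j$ has been established.

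Your route, by contrast, tries to deduce $n_j\notin[q,\tilde\rho]$ only \emph{after} first identifying $n_j$ with some $\tilde m_{j'}$ and then invoking Lemma~\ref{lem:rho}-type facts on the $\tilde\Rr$-side. This detour has loose ends. Your assertion that ``$n_j$ is a salient $l$-point with respect to $q$ of level $j$'' is unjustified: $h$ does not preserve levels, so there is no reason for $L_l(n_j)=j$, and the subsequent ``matching $l$-levels gives $j'$ in terms of $j$'' is therefore unfounded. Moreover, Lemma~\ref{lem:rho} locates $\tilde\rho$ relative to the $\tilde m_i$, but says nothing about $q$, so it does not by itself force $\tilde m_{j'}\notin[q,\tilde\rho]$; you would still need an independent argument to place $q$ on the correct side. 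The clean observation you are missing is precisely $n_{j+2}\in\partial B_j$ (inherited from $m_{i+2}\in\partial A_i$ via $h$), which settles the location of $n_j$ immediately, with no need for completeness, salient-level bookkeeping, or Lemma~\ref{lem:rho}.
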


\begin{proof}
By Lemma~\ref{l2} and applying $h$ we obtain that 
$\cup_{i \textrm{ odd}}B_i=\cup_{i\textrm{ even}}B_i=\tilde{\Rr}$ and $B_i\subset B_{i+2}$ for every $i\in \N$, 
so there exists $N$ such that $[\tilde \rho, q] \subset B_j$ for all 
$j \ge N$. By Lemma~\ref{lem:mid} it follows that $n_{i+2} \in \partial B_i$.
This implies that $n_j \notin [\tilde \rho, q]$ for $j \ge N+2$.
The argument for the arcs $\tilde A_i$ is analogous.
\end{proof}

\begin{lemma}
There exists $N\in\N$ such that for every $j\geq N$ there exists $j'\in\N$ such that $B_j=\tilde{A}_{j'}$, up to link-symmetry.
\end{lemma}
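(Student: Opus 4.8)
The plan is to combine the two completeness results already in hand with Lemma~\ref{lem:ms}. We know that $\{B_i\}_{i\in\N}$ is a complete sequence of $l$-link-symmetric arcs with respect to $q$, and that $\{\tilde A_i\}_{i\in\N}$ is a complete sequence of $l$-link-symmetric arcs with respect to $\tilde\rho$. By Theorem~\ref{thm:R} we have $h(\Rr)=\tilde\Rr$, so both families live in the same arc-component $\tilde\Rr$. First I would fix $N$ as in Lemma~\ref{lem:ms} (enlarging it if necessary so that also $[\tilde\rho,q]\subset\tilde A_{j'}$ for all sufficiently large $j'$, using Lemma~\ref{l2} applied to the $\tilde A_i$'s). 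Then for $j\ge N$ the arc $B_j$ is an $l$-link-symmetric arc, not contained in a single link of $\tilde\chain_l$, and it contains $\tilde\rho$ (by Lemma~\ref{lem:ms}). By completeness of $\{\tilde A_i\}$ with respect to $\tilde\rho$, the midpoint $n_j$ of $B_j$ must equal $\tilde m_{j'}$ for some $j'\in\N$; Lemma~\ref{lem:ms} already identifies this $j'$ and tells us $n_j=\tilde m_{j'}$.

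Next I would upgrade the equality of midpoints to the equality of arcs up to link-symmetry. Since $B_j$ and $\tilde A_{j'}$ are both $l$-link-symmetric with the same midpoint $n_j=\tilde m_{j'}$, the list of link-indices each arc passes through is a palindrome centred at the link containing this common midpoint. If one of these arcs strictly contained the other (in the sense of passing through strictly more links), then the larger of the two would be an $l$-link-symmetric arc through the common midpoint whose "reflection through one side" produces an $l$-link-symmetric arc containing $q$ or $\tilde\rho$ but with a midpoint strictly closer to that base point than any $n_i$ or $\tilde m_i$ — contradicting the relevant completeness statement. More carefully: suppose $B_j$ passes through more links than $\tilde A_{j'}$. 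The arc $\tilde A_{j'}$ is $l$-symmetric with $\pi_{l+j'}(\tilde A_{j'})=[\tilde c_2,\hat{\tilde c}_2]$, and $B_j\supsetneq\tilde A_{j'}$ as link-sets; reflecting the "overhang" of $B_j$ past the endpoint of $\tilde A_{j'}$ back through $n_j=\tilde m_{j'}$ produces an $l$-link-symmetric arc whose midpoint is a salient $l$-point strictly between $\tilde\rho$ and $\tilde m_{j'-2}$, hence equal to some $\tilde m_{j''}$ with $j''<j'$, but one then checks its $\pi$-projection data forces the midpoint to be too close to the critical point, exactly as in Cases I–II of the proof that $\{A_i\}$ is complete. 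Running the symmetric argument in $B$'s with the roles reversed handles the case $\tilde A_{j'}\supsetneq B_j$, so the two arcs pass through exactly the same links, i.e.\ $B_j=\tilde A_{j'}$ up to link-symmetry.

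The main obstacle I anticipate is the last bookkeeping step: translating "same midpoint, both $l$-link-symmetric, in the same arc-component" into "same arc up to link-symmetry." One has to be careful that the midpoint of an $l$-link-symmetric arc is only well-defined up to the arc-component $A^m$ inside its midlink, so equality $n_j=\tilde m_{j'}$ should be read as "same midlink, same highest-$l$-level point there," and one must verify that two maximal $l$-link-symmetric arcs (or the relevant canonical representatives $B_j$, $\tilde A_{j'}$) with the same midlink indeed pass through the same links. This is where one genuinely needs non-recurrence (via Proposition~\ref{prop:eps-symmetric} and Corollary~\ref{cor:2}): without it, an $l$-link-symmetric arc need not be $l$-symmetric, and the reflection argument that forces the overhang to vanish can fail. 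I would phrase the contradiction by pushing the projection $\pi_{l+i}$ of the putative longer arc forward and invoking that a center of $\eps$-symmetry at distance $>\delta$ from $\tilde c$ is impossible, just as in the completeness proof in Section~\ref{sec:complete}, so the write-up can cite those arguments rather than repeating them.
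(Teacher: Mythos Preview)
Your first paragraph matches the paper exactly: use Lemma~\ref{lem:ms} to get $\tilde\rho\in B_j$, then completeness of $\{\tilde A_i\}$ with respect to $\tilde\rho$ forces $n_j=\tilde m_{j'}$ for some $j'$.

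Your second step, however, is considerably more complicated than what is needed, and the reflection argument you sketch is not clearly correct. You propose to reflect the ``overhang'' of the larger arc through the common midpoint and then invoke the $\eps$-symmetry machinery from Section~\ref{sec:complete}; but the claim that this produces an $l$-link-symmetric arc ``whose midpoint is a salient $l$-point strictly between $\tilde\rho$ and $\tilde m_{j'-2}$'' does not follow from the reflection you describe, and the subsequent appeal to ``$\pi$-projection data forcing the midpoint too close to the critical point'' is not justified in this setting. More importantly, you state that non-recurrence is ``genuinely needed'' here --- it is not. The paper's argument for this lemma is purely structural and uses no $\eps$-symmetry at all.

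Here is the paper's much simpler contradiction. Since $B_j$ and $\tilde A_{j'}$ are both $l$-link-symmetric with the same midpoint, if they are not equal up to link-symmetry then one is strictly contained in the other. Suppose $B_j\subsetneq\tilde A_{j'}$. The key observation is that the endpoint $n_{j+2}\in\partial B_j$ (on the side away from $q$ and $\tilde\rho$, since $n_j=\tilde m_{j'}\notin[\tilde\rho,q]$) is itself the \emph{midpoint} of $B_{j+2}$. Because $B_j\subsetneq\tilde A_{j'}$ share the midpoint $\tilde m_{j'}$ and $\tilde m_{j'+2}\in\partial\tilde A_{j'}$, we get $n_{j+2}\in(\tilde m_{j'},\tilde m_{j'+2})$. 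But $B_{j+2}\ni\tilde\rho$, so completeness of $\{\tilde A_i\}$ with respect to $\tilde\rho$ forces $n_{j+2}=\tilde m_i$ for some $i$ --- and there is no $\tilde m_i$ in the open interval $(\tilde m_{j'},\tilde m_{j'+2})$. The case $\tilde A_{j'}\subsetneq B_j$ is symmetric, using instead that $\tilde m_{j'+2}\in\partial\tilde A_{j'}$ is the midpoint of $\tilde A_{j'+2}\ni q$ and completeness of $\{B_i\}$ with respect to $q$. No reflections, no $\eps$-symmetry, no non-recurrence.
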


\begin{proof}
Take $N$ from Lemma~\ref{lem:ms}. 
Assume by contradiction that there exists $j\geq N$ such that $B_j\neq \tilde{A}_{j'}$ for every $j'\in\N$.
By  completeness of $\{\tilde{A}_i\}_{i\in \N}$, there exists some $j'\in\N$ such that $n_j=\tilde{m}_{j'}$. As $B_j$ and $\tilde{A}_{j'}$ are both $l$-link symmetric with the same midpoint,
either $B_j\subsetneq\tilde{A}_{j'}$ or $\tilde{A}_{j'}\subsetneq B_j$. Assume that $B_j\subsetneq\tilde{A}_{j'}$. Note that since  $n_j=\tilde{m}_{j'} \notin
[\tilde\rho, q]$ we obtain that $n_{j+2}\in(\tilde{m}_{j'}, \tilde{m}_{j'+2})$. But this is impossible since $\tilde{m}_i\notin(\tilde{m}_{j'}, \tilde{m}_{j'+2})$
for every $i\in \N$. The second case follows similarly, but instead of the completeness of $\{\tilde{A}_i\}_{i\in \N}$ we use the completeness of sequence $\{B_i\}_{i\in\N}$.
\end{proof}

\begin{proposition}
There exist $N, M\in\N$ such that $L_l(n_{N+i})=i+M$ for every $i\in\N_0$.
\end{proposition}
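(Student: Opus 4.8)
The plan is to exploit the structural results about the arcs $A_i$ (hence about $B_i = h(A_i)$) together with the last two lemmas, which show that $B_j$ agrees with $\tilde A_{j'}$ up to link-symmetry for all large $j$. First I would recall from Lemma~\ref{lem:ms} and the subsequent lemma that there is $N_0\in\N$ such that for all $j\ge N_0$ there is $j'=j'(j)$ with $n_j=\tilde m_{j'(j)}$ and $B_j=\tilde A_{j'(j)}$ up to link-symmetry. Since $\tilde m_{j'}$ is a salient $l$-point with respect to $\tilde\rho$ of $l$-level exactly $j'$ (this was noted for the arcs $\tilde A_i$), the index $j'(j)$ is intrinsically determined: $L_l(n_j) = L_l(\tilde m_{j'(j)}) = j'(j)$. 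So the statement reduces to showing that the map $j\mapsto j'(j)$ is eventually an arithmetic progression with common difference $1$, i.e.\ $j'(j+1) = j'(j)+1$ for all large $j$; then one sets $N=N_0$, $M=j'(N_0)-N_0$ and renames.

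The key step is to pin down how consecutive $B_j$'s nest, and transfer that to the $\tilde A_{j'}$'s. From Lemma~\ref{l2} we have $A_i\subset A_{i+2}$, and from Lemma~\ref{lem:mid}, $m_{i+2}\in\partial A_i$ while $m_{i+1}\notin A_i$; applying $h$ gives the same relations for $B_i$ and $n_i$: $B_i\subset B_{i+2}$, $n_{i+2}\in\partial B_i$, $n_{i+1}\notin B_i$. The same relations hold for the $\tilde A_{j'}$ and $\tilde m_{j'}$ by their definition. Now suppose $j\ge N_0$ and consider $B_j$ and $B_{j+1}$, with $n_j=\tilde m_{j'(j)}$ and $n_{j+1}=\tilde m_{j'(j+1)}$. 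Using $n_{j+1}\notin B_j$, $n_{j+2}\in\partial B_j$, and the fact that $B_j$ and $B_{j+1}$ are $l$-link symmetric arcs with midpoints $n_j$, $n_{j+1}$ lying in $\Rr$ in the order dictated by Figure~\ref{fig:order} (the salient points $\tilde m_1,\tilde m_2,\dots$ march off to $\pm\infty$ alternately, at arc-lengths strictly increasing in index), I would argue that the midpoint $n_{j+1}$ of $B_{j+1}$ must be the next salient point after $n_j$ on the appropriate side, forcing $j'(j+1) = j'(j)+1$. More concretely: $B_{j+1}\supset B_j$ would contradict $n_{j+1}\notin B_j$ unless $B_{j+1}\subset B_{j+3}$ with $n_{j+3}\in\partial B_{j+1}$, and matching this with the $\tilde A$-picture via $B_j=\tilde A_{j'(j)}$ leaves only $j'(j+1)=j'(j)+1$ consistent with the completeness of $\{\tilde A_i\}$ and the ordering of the salient points.

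The main obstacle I anticipate is bookkeeping the parity and the geometric side (left of $\tilde\rho$ versus right of $\tilde\rho$) on which each successive salient point sits: the arcs $A_i$ for $i$ odd and $i$ even interleave (cf.\ Lemma~\ref{l2}: $\bigcup_{i\text{ odd}}A_i=\bigcup_{i\text{ even}}A_i=\Rr$), so one must be careful that $j\mapsto j'(j)$ really increases by exactly $1$ at every step and not, say, by $2$ with a parity shift. The clean way around this is to use the $l$-level characterization: $L_l(n_j)$ is a well-defined positive integer for every $j$ (since $c$ is non-periodic), it is monotone and, by the nesting $B_j\subset B_{j+2}$ together with $n_{j+1}\notin B_j$, it can increase by at most $1$ at each step; and it cannot stagnate because the $n_j$ are distinct salient points of distinct $l$-levels. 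Hence $L_l(n_j)$ is eventually $j$ plus a constant, which is exactly the assertion $L_l(n_{N+i}) = i+M$.
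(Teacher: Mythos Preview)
Your setup is right and your reduction is the same as the paper's: once $B_j=\tilde A_{j'(j)}$ up to link-symmetry for all large $j$, one has $L_l(n_j)=j'(j)$, and from $n_{j+2}\in\partial B_j$ (Lemma~\ref{lem:mid}) one easily gets $j'(j+2)=j'(j)+2$ along each parity. The whole content of the proposition is therefore to show that the odd and even arithmetic progressions interleave with offset exactly~$1$, i.e.\ $j'(j+1)=j'(j)+1$.

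This is precisely where your argument has a gap. You assert that $j\mapsto L_l(n_j)$ is monotone and ``can increase by at most~$1$ at each step'' from $B_j\subset B_{j+2}$ and $n_{j+1}\notin B_j$, but neither claim follows from those facts. The condition $n_{j+1}\notin B_j=\tilde A_{j'(j)}$ only says $\tilde m_{j'(j+1)}\notin\tilde A_{j'(j)}$; by Lemma~\ref{lem:mid} the salient points of opposite parity \emph{not} contained in $\tilde A_{j'(j)}$ are $\tilde m_{j'(j)+1},\tilde m_{j'(j)+3},\ldots$ and, when $\tilde\kappa>3$, also $\tilde m_{j'(j)-1}$ (and possibly more). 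So $n_{j+1}\notin B_j$ is consistent with $j'(j+1)=j'(j)+3$, or even with $j'(j+1)<j'(j)$, and nothing you have written excludes these. In particular monotonicity is exactly the nontrivial assertion to be proved, not an input.

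The paper closes this gap with Lemma~\ref{lem:kappa}: writing $j'(N+1)-j'(N)=j$ odd, the non-inclusion $B_N\not\subset B_{N+1}$ forces $j<\kappa$, and if $j>1$ one uses the inclusion $\tilde A_{j'}\subset\tilde A_{j'+\kappa}$ to produce an inclusion $A_N\subset A_{N+i+1}$ with $i+1$ odd and $<\kappa$, contradicting the second clause of Lemma~\ref{lem:kappa}. The case $j'(N+1)<j'(N)$ is handled symmetrically. Your outline never invokes Lemma~\ref{lem:kappa}, and without it (or an equivalent device comparing the two values of $\kappa$) the interleaving cannot be pinned down.
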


\begin{proof}
Take $N$ from Lemma~\ref{lem:ms}. There exist $j', j''\in \N_0$ such that
(up to link-symmetry):
$$B_N=\tilde{A}_{j'}, B_{N+2}=\tilde{A}_{j'+2}, B_{N+4}=\tilde{A}_{j'+4},\ldots$$
$$B_{N+1}=\tilde{A}_{j''}, B_{N+3}=\tilde{A}_{j''+2}, B_{N+5}=\tilde{A}_{j''+4},\ldots$$
or in terms of $l$-levels $L_l(n_{N+2i})=j'+2i$, $L_l(n_{N+2i+1})=j''+2i$ for all $i\in \N_0$.
So far we only know that $j'$ and $j''$ must be of different parity. Assume $j''>j'$, so there exists an odd $j\geq 1$ such that $j''=j'+j$.
Since $B_N=h(A_N)\not\subset h(A_{N+1})=B_{N+1}$, from Lemma~\ref{lem:kappa} we conclude that $j<\kappa$. Assume that $j>1$ and take $i = \kappa-j$.
From Lemma~\ref{lem:kappa} we obtain that $\tilde{A}_{j'}\subseteq \tilde{A}_{j'+\kappa}$. But $\tilde{A}_{j'}=B_N$, $\tilde{A}_{j'+\kappa}=\tilde{A}_{j'+j+i}=\tilde{A}_{j''+i}=B_{N+i+1}$.
Thus we get $h(A_N)=B_N\subseteq B_{N+i+1}=h(A_{N+i+1})$ which is a contradiction because $i+1<\kappa$ and $i+1$ odd. We conclude that $j=1$. 
\\
The other possibility is that $j''<j'$. Since also $B_{N+1}=h(A_{N+1})\not\subseteq h(A_N)=B_N$, we conclude that $j'=j''+j$, where $j<\kappa$ odd. Recall that
$\tilde{A}_{j''}\subseteq \tilde{A}_{j''+\kappa}$, but $\tilde{A}_{j''}=B_{N+1}$ and $\tilde{A}_{j''+\kappa}=B_{N+\kappa-j}$, where $\kappa-j\in \N$ is even. This gives the existence of
an even $0<i<\kappa$ such that $B_{N+1}\subseteq B_{N+i}$, which is again a contradiction.
\\
So the only possibility is $j''=j'+1$, which gives $B_{N+i}=\tilde{A}_{j'+i}$
up to link-symmetry for every $i\in\N_0$ and this finishes the proof.
\end{proof}

\noindent
So far we have shown that there exist $N, M\in \N$ such that $h$ maps
the salient point of $k$-level $i+N$ \emph{close to} the salient point of $l$-level $i+M$ for every $i\in \N_0$. Here \emph{close to} means that $h(m_{i+N})$ is in the same link of $\tilde{\chain_l}$
as $\tilde{m}_{i+M}$ and the arc-component of that link containing point $\tilde{m}_{i+M}$ also contains the point $h(m_{i+N})$. Note that this works for any $k$ and $l$ such that
$h(\chain_k)\preceq \tilde{\chain}_l$.
The salient $(k+N)$-point of $k$-level $i$ is the salient $k$-point of $k$-level $i+N$. Therefore, if we consider $\chain_{k+N}$ instead of $\chain_k$, 
then $h(m_i)$ is close to $\tilde{m}_{i+M}$ for every $i\geq 1$.

The proof of the Core Ingram Conjecture now follows analogously as in
\cite{FL}. We first need to prove that $h$ preserves the sequence of $k$-points and then argue that sequences of $k$-points and $l$-points
of $\Rr$ and $\tilde{\Rr}$ respectively are never the same, unless $s = \tilde s$.

\begin{proposition}\label{prop}
Let $n\in \N$ and $x\in\Rr$ be a $k$-point with $k$-level $n$. Then $h(x)\in \tilde{\Rr}$ is in the link of $\tilde{\chain}_l$ that contains $\tilde{m}_{n+M}$ and
the arc-component of the link
that contains $h(x)$ also contains an $l$-point $y$ with $l$-level $n+M$ (see Figure~\ref{fig:map}).
\end{proposition}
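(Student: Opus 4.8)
The plan is to promote the correspondence of \emph{salient} $k$-points established in the preceding propositions --- namely that, after passing to the fine chain $\chain_{k+N}$, the point $h(m_i)$ lies in the same link, and in the same arc-component of that link, as $\tilde m_{i+M}$, for every $i\ge 1$ --- to a correspondence of \emph{all} $k$-points of $\Rr$ with $l$-points of $\tilde\Rr$, level by level and up to the fixed shift $M$. A useful preliminary remark is that $\rho=\sigma(\rho)$, and that $\sigma$ sends a salient $k$-point of level $i$ with respect to $\rho$ to a salient $k$-point of level $i+1$; hence $\sigma(m_i)=m_{i+1}$ and, symmetrically, $\tilde\sigma(\tilde m_j)=\tilde m_{j+1}$. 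Together with Theorem~\ref{thm:iso} this already shows that $h$ conjugates $\sigma$ to a self-homeomorphism of $\CUILt$ isotopic to $\tilde\sigma$, so that $h$ can shift $k$-levels only by a bounded amount; the substance of Proposition~\ref{prop} is that this amount is exactly $M$, and that is a local, combinatorial matter.

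To prove it I would fix a $k$-point $x\in\Rr$ of level $n$. Because the arcs $A_i$ exhaust $\Rr$, the point $x$ lies in $A_i$ for all large $i$, and by the lemmas preceding Proposition~\ref{prop} the arc $h(A_i)=B_i$ coincides, up to $l$-link-symmetry, with the honest $l$-symmetric arc $\tilde A_{i+M}$; so it suffices to understand $h|_{A_i}\colon A_i\to\tilde A_{i+M}$. Each of these arcs carries a palindromic pattern of $k$- resp.\ $l$-points that contains the salient points $m_j$ resp.\ $\tilde m_{j+M}$, and I would induct on the position of a $k$-point in the pattern of $A_i$ (equivalently, on its arc-length distance from $\rho$), using the already matched pairs $m_j\leftrightarrow\tilde m_{j+M}$ as the anchors of the induction. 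For the inductive step, let $p$ be the next $k$-point, of level $q$, just beyond an already identified one; surround $p$ by the maximal $k$-symmetric arc $G$ with midpoint $p$ --- a nondegenerate arc, since $\pi_{k+q}$ folds at $p$ and, $T_s$ being long-branched, $T_s^{q}$ is monotone on a one-sided neighbourhood of $c$ of a definite length. Then $h(G)$ is $l$-link-symmetric, and by the same mechanism as in the proof of Theorem~\ref{thm:R} --- reflecting $G$ across the adjacent already-matched point inside the $l$-link-symmetric arc through $q=h(\rho)$ (or through the image of the previous point), and then applying Lemma~\ref{lem:12} and Proposition~\ref{prop:eps-symmetric} --- the highest-$l$-level point in the central arc-component of $h(G)$ is forced to have $l$-level exactly $q+M$: any other value produces an arc of $\tilde\Rr$ whose $l$-pattern is incompatible with Lemma~\ref{lem:12}. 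Taking $p=x$ would then yield the statement; for $x=m_n$ one recovers $h(m_n)$ close to $\tilde m_{n+M}$.

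The step I expect to be hardest is precisely this last rigidity. A priori the $k$-pattern of an arc of $\Rr$ is dictated by the kneading sequence of $T_s$, the $l$-pattern of its $h$-image by that of $T_{\tilde s}$, and there is no reason for the two to match under a shift; the only leverage is that once $\chain_k\preceq h^{-1}(\tilde\chain_l)$, the homeomorphism $h$ makes the pertinent interval maps $\eps$-close in the sense of Definition~\ref{def:eps-close}, and one has to feed this into Lemma~\ref{lem:12} repeatedly, working outward from a salient anchor, while treating the various positions of the relevant projection relative to $\tilde c$, $\tilde c_2$, $\tilde c_1$ and $\tilde r$ by the same case analysis as in Lemma~\ref{lem:Qint}. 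Guaranteeing that the induction never runs short of anchors is exactly why one works with $\chain_{k+N}$ --- so that $h(m_i)$ is close to $\tilde m_{i+M}$ for every $i\ge 1$, not merely for $i\ge N$.
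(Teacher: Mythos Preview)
Your sketch has a genuine gap in the inductive step. You propose to surround the unknown $k$-point $p$ (of level $q$) by its maximal $k$-symmetric arc $G$, observe that $h(G)$ is $l$-link-symmetric, and then argue that the $l$-level of the midpoint of $h(G)$ must be $q+M$ because ``any other value produces an arc of $\tilde\Rr$ whose $l$-pattern is incompatible with Lemma~\ref{lem:12}''. But Lemma~\ref{lem:12} is a statement specifically about arcs whose $\hat l$-pattern is $12$; it says nothing about the $l$-level of the midpoint of an arbitrary $l$-link-symmetric arc. Moreover, $G$ need not contain $\rho$, so $h(G)$ need not contain $q$, and hence neither the completeness of $\{B_i\}$ nor the case analysis of Lemma~\ref{lem:Qint} (both of which concern arcs through $q$) applies to $h(G)$. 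Reflecting $G$ across an adjacent salient point $m_j$ just produces another arc with midpoint of the same unknown level $q$, so you have not gained information. (Also, Theorem~\ref{thm:iso} concerns self-homeomorphisms of a single $\CUIL$ and is a corollary of the whole argument, not a tool available here.)

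The paper's proof reverses the roles: instead of taking a symmetric arc centered at the unknown point, it reflects the unknown point across a \emph{salient} midpoint whose image is already known. Concretely, write $S_i$ for the maximal arc through $m_i$ on which $\pi_{k+i}$ is injective; then $S_{i+1}^{\rho}=S_i$ and $S_{i+1}^{\neg\rho}$ reflects into $S_i$ across $m_{i+1}$. Induct on $i$: if $x\in S_{i+1}^{\neg\rho}$ has level $n$, its $k$-symmetric reflection $\hat x\in S_i$ across $m_{i+1}$ has the same level $n$, and by induction $h(\hat x)$ lies in the same link-arc-component as an $l$-point $\hat y$ of level $n+M$. Now $h([x,\hat x])$ is $l$-link-symmetric with midpoint $\tilde m_{i+1+M}$ (this midpoint is known, because $m_{i+1}$ is salient), so reflecting $\hat y$ in $\tilde m_{i+1+M}$ produces an $l$-point $y$ of level $n+M$ in the link-arc-component of $h(x)$. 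No appeal to Lemma~\ref{lem:12} or to $\eps$-symmetry is needed at this stage; the argument is purely combinatorial once the salient correspondence $m_j\leftrightarrow\tilde m_{j+M}$ is in place.
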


\begin{figure}[ht]
\unitlength=16mm
\begin{picture}(1.5,2)(5.8,0.5)
\put(2,2){\line(1,0){2}} \put(4, 1.5){\oval(1,1)[r]}
\put(2,1){\line(1,0){2}}
\put(4, 1.5){\oval(1.5,2)[l]}
\put(4, 1.5){\oval(2,2)[r]}
\put(4.2, 1.4){\small $x$}
\put(4.5, 1.5){\circle*{0.05}}
\put(5.5, 1.5){\vector(1, 0){1.7}}
\put(6.3, 1.8){\small $h$}

\put(7.6, 2.1){\line(1, 0){2}}
\put(9.5, 1.7){\line(1,0){0.5}}
\put(9.5,1.3){\line(1, 0){0.5}}
\put(7.6, 0.9){\line(1, 0){2}}

\put(7.6, 2.2){\line(1, 0){2}}
\put(9.5, 1.6){\line(1,0){0.5}}
\put(9.5,1.4){\line(1, 0){0.5}}
\put(7.6, 0.8){\line(1, 0){2}}

\put(9.5, 1.9){\oval(1.7, 0.6)[r]}
\put(9.5, 1.1){\oval(1.7, 0.6)[r]}
\put(10, 1.5){\oval(1.3, 0.2)[l]}
\put(8.65,1.45){\scriptsize $\tilde{m}_{n+M}$}
\put(9.25,1.5){\circle*{0.05}}

\put(9.5, 1.9){\oval(1.5, 0.4)[r]}
\put(9.5, 1.1){\oval(1.5, 0.4)[r]}
\put(10, 1.5){\oval(1.5, 0.4)[l]}
\put(9.7, 1.5){\oval(2.2,2)}
\put(9.8,2.2){\circle*{0.05}}
\put(9.8,2.25){\small $h(x)$}
\put(9.5,1.45){\small $y$}
\put(9.35,1.5){\circle*{0.05}}

\end{picture}
\caption{Claim of the Proposition~\ref{prop}.}
\label{fig:map}
\end{figure}
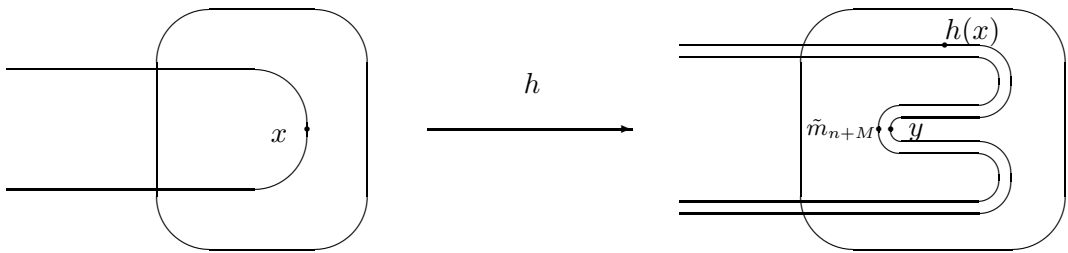

\begin{proof}
For $i\in\N$ denote by $S_i$ the longest arc in $\Rr$ containing $m_i$ such that $\pi_{k+i}|_{S_i}$ is injective. Note that $S_i$ is exactly the arc-component of $\pi^{-1}_{k+i}([c_2, c_1])$
which contains $m_i$ and that $\pi_{k+i}(S_i)=[c_2, c_1]$, because we are on arc-component $\Rr$.
 Also note that $A_i\subset S_i$ and the endpoint of $A_i$ projecting with $\pi_{k+i}$ to $c_2$
 agrees with one endpoint of $S_i$.
Let $S_i^{\rho}, S_i^{\neg\rho} \subset S_i$ be the arc-components of $\pi^{-1}_{k+i}([c, c_1])$ and of $\pi^{-1}_{k+i}([c_2, c])$ respectively, with $m_i$
as the common boundary point.
Note that $\rho\in S_i^{\rho}$ and its endpoints are $m_i$ and $m_{i+1}$. Also, $\rho\notin S_i^{\neg\rho}$ and its endpoints are $m_i$ and $m_{i+2}$.
Also note that $S_i^{\neg\rho}$ is shorter (in arc-length) than $S_i^{\rho}$ and that $S_{i+1}^{\rho}=S_i$. We will prove the proposition for
$k$-points in $S_i$ by induction on $i$.\\
Note that all $k$-points in $S_1$ are salient, and by the remarks preceding this proposition it follows that the proposition holds for salient points.
Assume that the proposition holds for all $k$-points in $S_i(=S_{i+1}^{\rho}).$ Take a $k$-point $x\in S_{i+1}^{\neg\rho}\setminus\{m_{i+1}, m_{i+3}\}$ with $k$-level $n$.
Note that $n<i+1$ by the definition of $S_{i+1}$. Also, since $S_{i+1}^{\neg\rho}$ is shorter than $S_{i+1}^{\rho}$ there exists a $k$-point $\hat{x}\in S_{i+1}^{\rho}$
such that $[x, \hat{x}]$ is $k$-symmetric with midpoint $m_{i+1}$. Observe that $h([x, \hat{x}])$ is $l$-link symmetric with midpoint $\tilde{m}_{i+1+M}$, because it is the point with
the highest $l$-level
in the link containing $h(m_{i+1})$. Since $\hat{x}\in S_{i+1}^{\rho}=S_i$, $h(\hat{x})$ is in the link containing $\tilde{m}_{n+M}$ and the arc-component
of the link containing $h(\hat{x})$ contains
$l$-point $\hat{y}$ such that $L_l(\hat{y})=n+M$. Take such $\hat{y}$ closest (in arc-length) to $\tilde{m}_{i+1+M}$ such that there are no points of $l$-level greater or equal than
$i+1+M$ in $(\tilde{m}_{i+1+M}, \hat{y})$. Since $n<i+1$,
we obtain that $L_l(\hat{y})=n+M<i+1+M=L_l(\tilde{m}_{i+1+M})$.
Note that $h(x)\in(\tilde{m}_{i+3+M}, \tilde{m}_{i+1+M})$ and thus there must exist an $l$-point $y$ such that the arc $[y, \hat{y}]$ is $l$-symmetric with midpoint
$\tilde{m}_{i+1+M}$. This implies that $y$ and $\hat{y}$ both have the same level $n+M$, and that they belong to the same link. The arc-component of the link containing $y$
must also contain point $h(x)$.
This concludes the proof for every $k$-point in $S_{i+1}$. Since $\cup_{i}S_i=\Rr$,
this concludes the proof.  
\end{proof}

\begin{proposition}\label{prop:bijec}
Let  $k, l, \hat k$ be such that $h(\chain_k)\preceq\tilde{\chain}_l\preceq h(\chain_{\hat{k}})$ holds as in \eqref{eq:refine}.
Take $M, M'\in\N$ such that $h$ maps every $k$-point with $k$-level $n$ close to 
$l$-point with $l$-level $n+M$ and $h^{-1}$ maps every $l$-point with $l$-level $n$ close to $\hat{k}$-point with $\hat{k}$-level $n+M'$.
Then for every $K \in \N$, there is an orientation preserving bijection between
$$
\{ u \in [m_K, m_{K+1}] : L_k(u) = n \}
\text{ and }
\{ v \in [\tilde m_{K+M}, \tilde m_{K+1+M}] : L_l(v) = n+M \}.
$$
\end{proposition}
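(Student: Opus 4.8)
The plan is to deduce this, following \cite{FL}, from Proposition~\ref{prop} by upgrading it from single $k$-points to the whole fundamental domain $[m_K,m_{K+1}]$. It suffices to construct an arc-length order isomorphism $\Phi$ between the set of all $k$-points of $[m_K,m_{K+1}]$ and the set of all $l$-points of $[\tilde m_{K+M},\tilde m_{K+1+M}]$ satisfying $L_l(\Phi(u))=L_k(u)+M$: restricting $\Phi$ to the level-$n$ points then yields the proposition, since an order isomorphism between finite ordered sets is exactly an orientation-preserving bijection.

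First I would fix orientations and record level bounds. By Lemma~\ref{lem:rho}, $\rho\in[m_K,m_{K+1}]$; since $[m_K,m_{K+1}]$ is the arc-component $S_K^{\rho}$ on which $\pi_{k+K}$ is injective onto $[c,c_1]$, and $m_K,m_{K+1}$ are salient with respect to $\rho$ of $k$-levels $K$ and $K+1$, every $k$-point of $[m_K,m_{K+1}]$ has $k$-level at most $K+1$, with $K+1$ attained only at $m_{K+1}$ and $k$-level at most $K$ on the subarc $[m_K,\rho]$. The analogous statement holds on $[\tilde m_{K+M},\tilde m_{K+1+M}]$, which contains $\tilde\rho$, with all levels shifted by $M$. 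Orient $[m_K,m_{K+1}]$ starting at $m_K$ and $[\tilde m_{K+M},\tilde m_{K+1+M}]$ starting at $\tilde m_{K+M}$; since $h(m_K)$ and $h(m_{K+1})$ lie in the $\tilde\Rr$-components of the links of $\tilde m_{K+M}$ and of $\tilde m_{K+1+M}$ respectively, $h$ is monotone from the first orientation to the second.

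Next I would control the image arc. As the endpoints of $h([m_K,m_{K+1}])$ lie in the links of $\tilde m_{K+M}$ and $\tilde m_{K+1+M}$, this arc and $[\tilde m_{K+M},\tilde m_{K+1+M}]$ agree except on two end-pieces contained in the single links of $\tilde m_{K+M}$ and $\tilde m_{K+1+M}$; each end-piece is contained in the symmetric arc $\tilde A_{K+M}$, resp.\ $\tilde A_{K+1+M}$, and the $l$-symmetry of that arc reflects it level-preservingly onto an arc on the $\tilde\rho$-side of its midpoint. Combining these reflections with the version of Proposition~\ref{prop} for $h^{-1}$ (valid after refining the chains once more; it sends an $l$-point of level $p$ near a salient $\hat k$-point of level $p+M'$, hence near a $k$-point of level $p+M'-(k-\hat k)$, the reindexings being compatible so that the two directions invert each other on levels) I would show that, up to the reflections, every $l$-point of $[\tilde m_{K+M},\tilde m_{K+1+M}]$ is accounted for exactly once by $h([m_K,m_{K+1}])$, and conversely.

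Finally I would assemble $\Phi$: for a $k$-point $u\in[m_K,m_{K+1}]$ of level $n$, Proposition~\ref{prop} puts $h(u)$ in a link whose $\tilde\Rr$-component contains $\tilde m_{n+M}$ and an $l$-point of level $n+M$; let $\Phi(u)$ be that $l$-point, reflected over $\tilde m_{K+M}$ or $\tilde m_{K+1+M}$ if it fell in one of the end-pieces. Then $\Phi(u)\in[\tilde m_{K+M},\tilde m_{K+1+M}]$ by the previous step, $\Phi$ shifts levels by $M$, and $\Phi$ is strictly monotone by monotonicity and injectivity of $h$ (the reflections touch only $l$-points inside the end-pieces, so injectivity is preserved); running the same construction for $h^{-1}$ gives a monotone level-shifting map in the reverse direction, and two injections between finite sets are mutually inverse bijections. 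The hardest step is the endpoint surgery of the third paragraph: matching precisely the $l$-points lying in the end-pieces inside a single link of $\tilde m_{K+M}$ or $\tilde m_{K+1+M}$, where the link ambiguity is concentrated, which forces one to use both the $l$-symmetry of the arcs $\tilde A_i$ about their midpoints and the compatibility of the $h$- and $h^{-1}$-forms of Proposition~\ref{prop}; a minor point is checking the constant relation $M+M'=k-\hat k$ that makes the two directions invert on levels.
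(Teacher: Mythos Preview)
Your outline has a genuine gap at the point where you define $\Phi$. Proposition~\ref{prop} only tells you that the arc-component of the link of $\tilde\chain_l$ containing $h(u)$ contains \emph{some} $l$-point $y$ with $L_l(y)=L_k(u)+M$; it does not tell you that $y$ is unique, nor that distinct $k$-points of the same level land in distinct arc-components. An arc-component of a link is a short arc in $\tilde\Rr$ that can contain several $l$-points of any given level, and two nearby $k$-points $u_1\prec u_2$ of level $n$ can very well satisfy $h(u_1),h(u_2)\in A^v$ for a single $v$. Your sentence ``$\Phi$ is strictly monotone by monotonicity and injectivity of $h$'' does not bridge this: $\Phi(u)$ is not $h(u)$ but a rounded value, and the rounding can collapse points. (The further claim that ``two injections between finite sets are mutually inverse bijections'' is false as stated, though harmless for the mere existence of a bijection.)

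The paper's proof supplies exactly the missing ingredient: before attempting any assignment it applies a large power $\sigma^N$ of the shift so that the finitely many target points $\sigma^N(\tilde z_1),\dots,\sigma^N(\tilde z_b)$ lie in pairwise distinct arc-components of links, and each is the unique highest-level $l$-point of its arc-component. This makes the map $\sigma^N(z_i)\mapsto\sigma^N(\tilde z_j)$ well-defined; injectivity is then forced because if two images fell in the same arc-component, the $k$-point of strictly larger level between $z_{i_1}$ and $z_{i_2}$ would map into that same arc-component and produce, via Proposition~\ref{prop}, an $l$-point there of level exceeding $n+N+M$, contradicting that $\sigma^N(\tilde z_j)$ is the midpoint. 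The bijection is then obtained by a sandwich count $a\le b\le a$, using $h^{-1}$ for the second inequality and the identity $M+M'=k-\hat k$ (which you flagged as ``minor'' but is in fact what closes the loop); no endpoint surgery or reflections over $\tilde m_{K+M}$, $\tilde m_{K+1+M}$ are needed.
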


\begin{proof}
First we claim that $M+M'=k-\hat{k}$. Take the salient $k$-point 
$m_i$ with $L_k(m_i) = i$ and note that it is also a salient $\hat{k}$-point 
with $L_{\hat k}(m_i) = i+k-\hat k$.
Note that by remarks before Proposition~\ref{prop}, homeomorphism $h$ maps the salient $k$-point with level $i$ close to the salient $l$-point with $l$-level $i+M$, which is mapped by $h^{-1}$ close to the salient
$\hat{k}$-point with $\hat{k}$-level
$i+M+M'$. This means that the salient $\hat{k}$-point with $\hat{k}$-level $i+k-\hat{k}$ belongs to the same arc-component of the same link of the chain $\chain_{\hat{k}}$ that contains
the salient $\hat{k}$-point with $\hat{k}$-level $i+M+M'$. But this is only possible if the points are equal which implies that $M+M'=k-\hat{k}$.
\\
Denote by $z_i$, $i=1, \ldots, a$, all $k$-points with $k$-level $n$ in $[m_K, m_{K+1}]$ such that $m_K\prec z_1\prec\cdots\prec z_a\prec m_{K+1}$ (where $x\prec y\prec z$ if $[x, y]\subset [x, z]$).
Similarly, denote by $\tilde{z}_j$, $j=1, \ldots, b$, all $l$-points with $l$-level $n+M$ in $[\tilde{m}_{K+M}, \tilde{m}_{K+1+M}]$ such that
$\tilde{m}_{K+M}\prec\tilde{z}_1\prec\cdots\prec\tilde{z}_b\prec\tilde{m}_{K+1+M}$. We will first prove that $a\leq b$.
\\
 Recall that for an $l$-point $u$ such that $u\in\ell\in\tilde{\chain}_l$ we denote the arc-component of $u$ in $\ell$ by $A^u$.
 We can find $N>0$ such that $A^{\sigma^N(\tilde{m}_{K+M})}, A^{\sigma^N(\tilde{z}_1)},\ldots,A^{\sigma^N(\tilde{z}_b)}, A^{\sigma^N(\tilde{m}_{K+1+M})}$ are all different.
Also, every point $u\in\{\sigma^N(\tilde{m}_{K+M}), \sigma^N(\tilde{z}_1),\ldots,\sigma^N(\tilde{z}_b),
\sigma^N(\tilde{m}_{K+1+M})\}$ has to be a midpoint of  $A^{u}$.
Otherwise, there would exist another $l$-point with $l$-level $n+M+N$ in the same arc-component which is impossible since we separated them. Since $\sigma^N(m_K)=m_{K+N}$
and $\sigma^N(\tilde{m}_{K+M})=\tilde{m}_{K+M+N}$, we get from Proposition~$\ref{prop}$ that for every $i\in\{1, \ldots, a\}$ there exists unique $j\in\{1, \ldots, b\}$ such
that $h(\sigma^N(z_i))\in A^{\sigma^N(\tilde{z}_j)}$. This defines a function $x\mapsto\tilde{x}$ for every $k$-point $x\in[m_K, m_{K+1}]$ with $L_k(x)=n$. Note that we can
take $N$ such that $\sigma^{N}$ preserves orientation and so $x\prec y$ implies $\tilde{x}\prec\tilde{y}$.
\\
Next we want to prove that $x\mapsto\tilde{x}$ is injective.
Assume there are $i_1, i_2\in\{1, \ldots, a\}$ such that
$h(\sigma^N(z_{i_1})), h(\sigma^N(z_{i_2}))\in A^{\sigma^N(\tilde{z}_j)}$, for some $j\in\{1, \ldots, b\}$.
There exists a $k$-point $w$ such that $\sigma^N(z_{i_1})\prec w\prec\sigma^N(z_{i_2})$ and such that $L_k(w) > n+N$.
Note that $h(w)\in A^{\sigma^N(\tilde{z}_j)}$. But then there exists an $l$-point $\tilde{w}\in A^{\sigma^N(\tilde{z}_j)}$ with $l$-level strictly greater than $n+N+M$ which is
in contradiction with $\sigma^N(\tilde{z}_j)$ being the center of the link. This proves that the above function $x\mapsto\tilde{x}$ is injective, \ie $a\leq b$.
\\
It follows that
\begin{align*}
\#\{ & k\text{-points in $[m_K, m_{K+1}]$ with $k$-level $n$}\} \\
& \le
\#\{ l\text{-points in $[\tilde{m}_{K+M}, \tilde{m}_{K+1+M}]$ with $l$-level $n+M$}\}\\
& \le
\#\{ \hat{k}\text{-points in $[m_{K+M+M'}, m_{K+1+M+M'}]$ with $\hat{k}$-level $n+M+M'$}\}.
\end{align*}
We proved that $M+M'=k-\hat{k}$ so the last number is
equal to the number of $\hat{k}$-points in $[m_{K+k-\hat{k}}, m_{K+1+k-\hat{k}}]$ with $\hat{k}$-level $n+k-\hat{k}$. But this is actually equal to the number of $k$-points in
$[m_K, m_{K+1}]$ with $k$-level $n$. This proves that $a=b$.
\end{proof}

\begin{proof}[Proof of Theorem \ref{CIC}.]

We claim that the $k$-pattern of $[m_{n-1}, m_n]$ is equal to the $(l+M)$-pattern of $[\tilde{m}_{n-1}, \tilde{m}_n]$ and that $T^n_s(c) > c$ if and only if $T^n_{\tilde s}(c) > c$ for every $n\geq 2$. This gives $s=\tilde{s}$.

The claim is obviously true for $n=2$. 
For the inductive step, assume that it is true for all positive integers $<n$. 

Specifically, the $k$-pattern of $[m_{n-2}, m_{n-1}]$ is the $(l+M)$-pattern of $[\tilde{m}_{n-2}, \tilde{m}_{n-1}]$. Denote all $k$-points in $[m_{n-2}, m_{n-1}]$ by $m_{n-2}=x_0\prec x_1\prec\ldots\prec x_i\prec x_{i+1}=m_{n-1}$. Denote all $(l+M)$-points in $[\tilde{m}_{n-2}, \tilde{m}_{n-1}]$ analogously by $\tilde{m}_{n-2}=x_0\prec \tilde{x}_1\prec\ldots\prec \tilde{x}_i\prec \tilde{x}_{i+1}=\tilde{m}_{n-1}$. Since patterns are the same, $L_k(x_j)=L_{l+M}(\tilde{x}_j)$ for all $j\in\{0, \ldots, i+1\}$. By the inductive assumption it follows that $c\in\pi_k(x_j, x_{j+1})$ if and only if $c\in\pi_{l+M}(\tilde{x}_j, \tilde{x}_{j+1})$  for all $j=0, \ldots, i$. Since $\sigma([m_{n-2}, m_{n-1}])=[m_{n-1}, m_n]$ and every subarc $[x_j, x_{j+1}]$ is mapped to the subarc $[\sigma(x_j), \sigma({x}_{j+1})]$ with $k$-pattern $L_k(x_j)+1, 1, L_k(x_{j+1})+1$ or $L_k(x_j)+1, L_k(x_{j+1})+1$ according to whether $\pi_k([x_{j-1}, x_{j}])$ contains $c$ or not, inductive hypothesis for $n-1$ completely determines the $k$-pattern of $[m_{n-1}, m_n]$. The same holds for the arc $[\tilde{m}_{n-2}, \tilde{m}_{n-1}]$. Since we assumed that $T^{n'}_s(c)<c$ if
and only if $T^{n'}_{\tilde{s}}(c)<c$ for all $n'<n$, this gives that the $k$-pattern of $[m_{n-1}, m_n]$ is the same as the $(l+M)$-pattern of $[\tilde{m}_{n-1}, \tilde{m}_n]$.

From now on we study $[m_{n-1}, m_n]$.
Write $m_{n-1}\prec x_1\prec\cdots\prec x_i\prec m_n$ and
$\tilde{m}_{n-1}\prec\tilde{x}_1\prec\cdots\prec\tilde{x}_i\prec \tilde{m}_n$, where $\{x_1, \ldots, x_i\}$ is the set of all $k$-points in $[m_{n-1}, m_n]$ and $\{\tilde{x}_1, \ldots, \tilde{x}_i\}$ is the set of all $(l+M)$-points in $[\tilde{m}_{n-1}, \tilde{m}_n]$. From the previous paragraph we obtain that $L_k(x_j)=L_{l+M}(\tilde{x}_j)$ for every $j\in\{1, \ldots, i\}$.

Assume by contradiction that $T_s^n(c)$ and $T_{\tilde{s}}^n(c)$
are on the different sides of $c$ in $[c_2, c_1]$. Since $\pi_k(m_n)=T_s^n(c)$ and $\pi_{l+M}(\tilde{m}_n)=T_{\tilde{s}}^n(c)$, by assumption
$c\in\pi_k((x_i, m_n))$ and $c\notin\pi_{l+M}((\tilde{x}_i, \tilde{m}_n))$ or the opposite. The inductive hypothesis gives $c\in \pi_k((x_j, x_{j+1}))$ if and only if  $c\in \pi_{l+M}((\tilde{x}_j, \tilde{x}_{j+1}))$ for all $j\in\{1, \ldots, i-1\}$.
Apply the shift to $[m_{n-1}, m_n]$ and $[\tilde{m}_{n-1}, \tilde{m}_n]$ and count the number of $k$-points in
$\sigma([m_{n-1}, m_n])=[m_n, m_{n+1}]$ and the number of $(l+M)$-points in $\sigma([\tilde{m}_{n-1}, \tilde{m}_n])=[\tilde{m}_n, \tilde{m}_{n+1}]$.
Every point of $k$-level strictly greater than $1$ in $[m_n, m_{n+1}]$ is a shift of some  $x_j$ and every point of $(l+M)$-level greater than $1$ in $[\tilde{m}_n, \tilde{m}_{n+1}]$
is a shift of some $\tilde{x}_j$. So it suffices
to count the $k$-points of $k$-level 1  in $[m_n, m_{n+1}]$ and the $(l+M)$-points of $(l+M)$-level $1$ in $[\tilde{m}_n, \tilde{m}_{n+1}]$.
Such points are obtained as shifts of points in $[m_{n-1}, m_{n}]$ (respectively $[\tilde{m}_{n-1}, \tilde{m}_n]$) which are projected to $c$ by $\pi_k$ (respectively $\pi_{l+M}$).
The number of such points in $[m_{n-1}, m_n]$ differs by one from the number of points in $[\tilde{m}_{n-1}, \tilde{m}_n]$, because by our assumption only one of $\pi_k([x_{i}, m_n])$ and $\pi_{l+M}([\tilde{x}_{i}, \tilde{m}_n])$ contains $c$. That is,
the number of $k$-points of $k$-level
$1$ in $[m_n, m_{n+1}]=\sigma([m_{n-1}, m_n])$ is different from the number of $(l+M)$-points of $(l+M)$-level $1$ in
$[\tilde{m}_n, \tilde{m}_{n+1}]=\sigma([\tilde{m}_{n-1}, \tilde{m}_n])$ which contradicts Proposition~$\ref{prop:bijec}$.
\end{proof}

\end{document}